\numberwithin{equation}{section}
\newcommand{\eps}{\varepsilon}
\newcommand{\del}{\partial}
\newcommand{\vphi}{\varphi}
\newcommand{\norm}[1]{\| #1\|}
\newcommand{\Seps}{S_{\eps}}
\newcommand{\gen}{\mathcal{L}}
\let\CL\gen
\newcommand{\Ltilde}{\widetilde{\mathcal{L}}}
\def\eref#1{(\ref{#1})}
\newcommand{\innerprod}[1]{\langle#1\rangle}
\newcommand{\bigoh}{\mathcal{O}}
\newcommand{\rhobar}{\hat{\rho}}
\newcommand{\qbar}{\bar{q}}
\newcommand{\E}{\mathbb{E}}
\newcommand{\reals}{\mathbb{R}}
\let\R\reals
\newcommand{\integers}{\mathbb{Z}}
\newcommand{\naturals}{\mathbb{N}}
\newcommand{\complex}{\mathbb{C}}
\newcommand{\Linfty}{{L}^{\infty}}
\newcommand{\suptime}{\sup_{t \in [0,T]}}
\newcommand{\Ltwopi}{L^2[0,2\pi]}
\newcommand{\Aeps}{\mathcal{A}_\eps}
\newcommand{\tildeW}{\hat{W}}
\newcommand{\tildeu}{\hat{u}}
\newcommand{\tildex}{\widetilde{x}}
\newcommand{\tildey}{\widetilde{y}}
\newcommand{\hatW}{\hat{W}}
\newcommand{\hatu}{\hat{u}}
\theoremstyle{plain}
\newtheorem{lemma}[subsection]{Lemma}
\newtheorem{thm}[subsection]{Theorem}
\newtheorem{prop}[subsection]{Proposition}
\theoremstyle{definition}
\newtheorem{rmk}[subsection]{Remark}
\newtheorem{remark}[subsection]{Remark}
\newtheorem{ass}[subsection]{Assumption}
\newtheorem{corr}[subsection]{Corollary}
\def\scal#1{\langle #1\rangle}
\def\HS{{\mathrm{HS}}}
\def\BL{{\mathrm{BL}}}
\begin{document}
%\begin{flushleft}
%{\LARGE { \bf{Homogenisation in Stochastic PDEs}}}\\
%\end{flushleft}
%\vspace{10mm}
%\hspace{7mm}
%\begin{minipage}{0.58\textwidth}
%{\bf \large David Kelly}\\
%{Mathematics Institute, University of Warwick.}\\
%{Email: \tt{dtbkelly@gmail.com}}
%\end{minipage}
%
\title{Stochastic PDEs with multiscale structure}
\author{Martin Hairer$^1$ and David Kelly$^2$}
\institute{The University of Warwick, \email{M.Hairer@Warwick.ac.uk}
\and The University of Warwick, \email{D.T.B.Kelly@Warwick.ac.uk}}
\maketitle\thispagestyle{empty}

%\author{Martin Hairer}
%\author{David Kelly}
%\address{The University of Warwick}
%\email{M.Hairer@Warwick.ac.uk}
%\address{The University of Warwick}
%\email{D.T.B.Kelly@Warwick.ac.uk}

\begin{abstract}
We study the spatial homogenisation of parabolic linear stochastic PDEs exhibiting 
a two-scale structure both at the level of the linear operator and at the level of the Gaussian
driving noise. We show that 
in some cases, in particular when the 
forcing is given by space-time white noise, it may happen that the homogenised SPDE is 
not what one would expect from existing results for PDEs with more regular forcing terms. 
\end{abstract}

\section{Introduction}
%In recent years, much interest has been shown towards multiscale modelling of complex systems found in material science, chemistry and biology \cite{temam00,vandensome}. Such modelling techniques are particularly useful when studying systems that exhibit both microscopic and macroscopic dynamics. For instance, composite materials that are engineered to contain small scale heterogeneities that lead to effective large scale behaviour. A useful tool for such systems is the theory of homogenization.
In the material sciences, there is a significant interest towards objects that contain one structure at a macroscopic scale, overlaying a totally different structure on a microscopic scale. Examples range from everyday life, such as concrete and fibreglass, to the cutting edge of science, such as the cloaking devices implemented by meta-materials. Composite materials pose an important mathematical problem. Given a system with certain dynamics on a macroscopic scale and separate, but not necessarily independent, dynamics on a microscopic scale, approximate the effective dynamics of the whole system when the microscopic scale is small. Such problems can be formulated, and dealt with, using homogenisation theory, see for example \cite{MR0163062,papa77,vandensome,temam00,MR2148377}, as well as the monographs \cite{bensoussan78,pavliotis08} 
and references therein.  

The following is the prototypical homogenisation problem. Take a Markov process $X$ on $\R$ with generator 
\begin{equation}\label{e:unscaledgenerator}
\gen= b(x)\del_x + \frac{1}{2}\sigma^2(x) \del_x^2\;,
\end{equation}  
where $b$ and $\sigma$ are suitably smooth functions, periodic on $[ 0,2\pi]$. Consider then the diffusively rescaled process $X_\eps(t) = \eps X(t/\eps^2)$, with generator given by  
\begin{equation}\label{e:generator}
\gen_\eps= \frac{1}{\eps}b(x/\eps)\del_x + \frac{1}{2}\sigma^2(x/\eps) \del_x^2\;.
\end{equation} 
We also require that $\sigma$ is bounded away from zero and that the ``centering condition''
$
\int_0^{2\pi} b(v)/\sigma^2(v) dv=0
$
is satisfied. 

One example to keep in mind is the when $\sigma =1$ and 
\[
V(x/\eps) = - \int^{x/\eps} b(v) dv \;.   
\]
The centering condition guarantees that $\int_0^{2\pi} b(v) dv =0$, so that $V(x/\eps)$ itself is $2\pi \eps$ periodic. 
In this case, the diffusion $X_\eps$ provides a simple model for diffusion in a one-dimensional composite material, where the material is composed of cells of size $2\pi \eps$ and the dynamics in each cell is governed by the potential $V(x/\eps)$. 

It is a classical result that
\begin{equation}\label{e.BMresult}
X_\eps(t) \Rightarrow \mu B(t)\;,
\end{equation}
where $B(t)$ is a Brownian motion on $\R$, $\mu>0$ is a constant determined by $b$ and $\sigma$, and $\Rightarrow$ denotes convergence in distribution on the space of continuous functions \cite{bensoussan78}. This result is powerful when analysing parabolic PDEs of the following type 
\begin{equation}\label{e.parabolic}
\del_t u_\eps(x,t) = \gen_\eps u_\eps(x,t) + f(x,t)\;,
\end{equation}
with some forcing term $f$. 
We will assume $u_\eps(x,0)=0$ as we are more interested in the forcing term. Duhamel's principle then states that 
\[
u_\eps(x,t) = \int_0^t \E[f(s,X_\eps(t-s))| X_\eps(0)=x]\,ds\;,
\]
where  $\E$ averages over the paths $X_\eps$ (but not any possible randomness in the forcing term). If $f$
is sufficiently regular, it follows from \eqref{e.BMresult} that  $u_\eps \to u$ as $\eps \to 0$, where 
$u$ satisfies the PDE
\begin{equation}\label{e.naive}
\del_t u(x,t) = {\mu\over 2} \del_x^2 u(x,t) + f(x,t)\;.
\end{equation}
Such results have been widely generalised in both the forcing terms considered and also the structural assumptions placed on the generator $\gen_\eps$, see for example \cite{pardoux99,MR2078542,pardoux08,pardoux09}. The article \cite{pardoux09} contains a brief but recent overview of the field.
On the other hand, one can find only very few results in the literature treating the case of stochastic PDEs
where both the noise term and the linear operator exhibit  a multiscale structure, and this is the main focus of
this article. In some situations where the limiting noisy term is sufficiently regular, the previously mentioned
results have been extended to the stochastic case, see for example \cite{MR2072382,duan07,wang07}.
The present article aims to provide a preliminary understanding of the type of phenomena that can arise
in the situation where the limiting equation is driven by very rough noise, so that resonance effects can also
play an important role.

Over the last few decades, there has been much progress towards making sense of solutions to stochastic PDEs, where the forcing term may be a highly irregular Gaussian signal taking values in spaces of rather irregular distributions, see
for example \cite{daprato92,hairer09} for introductory texts on the subject. It is therefore natural to ask whether asymptotic results for PDEs like \eqref{e.parabolic} can be extended to the case where $f$ is a random, distribution-valued 
process. To give an idea of the type of results obtained in this article, let $\xi$ be space-time white noise,
which is the distribution-valued Gaussian process formally satisfying $\E \xi(s,x) \xi(t,y) = \delta(s-t)\delta(x-y)$.
For fixed $\eps>0$, one can easily show that 
\begin{equation}\label{e.STWN}
\del_t u_\eps = \gen_\eps u_\eps + \xi
\end{equation}
has a unique solution $u_\eps$ with almost surely continuous sample paths in $\Ltwopi$. By analogy with the
 classical theory outlined above and since $\xi$ does not show any explicit $\eps$-dependence, 
 one might guess that $u_\eps$ has a limit $u$, satisfying 
\begin{equation}\label{e:classical}
\del_t u = \mu \del_x^2 u + \xi\;.
\end{equation} 
It turns out that this is not the case. Instead, we will show that the true limit solves 
\begin{equation}\label{e.surprise}
\del_t u = \mu \del_x^2 u + \norm{\rho} \xi\;, 
\end{equation}
where $\norm{\cdot}$ denotes the $\Ltwopi$ norm (normalised such that the corresponding scalar product is given by
$\scal{f,g} = {1\over 2\pi}\int_0^{2\pi}f(x)g(x)\,dx$) and $\rho$ is the invariant measure for the process with generator $\gen$, normalised to satisfy $\innerprod{\rho,1}=1$. 

\begin{rmk}
By Jensen's inequality, one always has $\norm{\rho} \ge 1$, with equality if and only if $\rho$ is constant. As
a consequence, \eref{e.surprise} differs from \eref{e:classical} as soon as $\CL$ is not in divergence form.
Furthermore, the effect of the noise is always enhanced by non-trivial choices of $\CL$, which is 
a well-known fact in different contexts \cite{pavliotis08}.
\end{rmk}

The crucial fact is of course the lack of regularity of $\xi$. Since the law of the process $X_\eps$ generated by $\gen_\eps$ will vary with $x/\eps$, its law will typically have large Fourier components at wave numbers 
close to integer multiples of $1/\eps$. The difference between \eref{e.surprise} and \eref{e:classical} can then
be understood, at least at an intuitive level, as coming from the resonances between these Fourier modes and
the corresponding Fourier modes of the driving noise.
Such resonances would be negligible for more regular noises, but turn out to lead to non-negligible contributions
in the case of space-time white noise. 

The aim of this article is to investigate this phenomenon for SPDEs of the type \eqref{e.STWN}, but replacing $\xi$ with a more general Gaussian forcing term. In particular, we treat noise that exhibits spatial structure 
at the microscopic scale. We can always (formally) write such signals as 
\begin{equation}\label{e:reprNoise}
\zeta(x,x/\eps,t) =  \sum_{k\in\integers} q^k(x,x/\eps)\dot{W_k}(t)\;,
\end{equation}
where the $W_k$ are i.i.d.\ complex-valued Brownian motions, save for the condition $W_{-k} = W_k^\star$
ensuring that the overall signal is 
real-valued. Throughout this article, we will require the additional assumption that the noise $\zeta$ is \emph{cell-translation invariant}, in the sense that its distribution is unchanged by translations by multiples of $2\pi \eps$. 
This assumption reflects the idea that the underlying material has the same structure in each cell. 
At the level of the representation \eref{e:reprNoise}, this invariance is enforced by assuming that 
one has 
\begin{equation}\label{e:reprq}
q^k(x,x/\eps) = q_k(x/\eps)e^{ikx}\;,
\end{equation}
for each $k \in \integers$, where $\{q_k\}$ is a collection 
of $2\pi$-periodic functions. 

To see that this leads to the claimed invariance property, notice that, for $x,y$ satisfying $x-y=2\pi \eps n$, we have that
\begin{align*}
\sum_{k\in\integers} q_k(y/\eps) e^{iky}\dot{W}_k(t) &= \sum_{k\in\integers} q_k(x/\eps) e^{ikx}e^{2\pi i k \eps n}\dot{W}_k(t)\\
&\stackrel{d}{=} \sum_{k\in\integers} q_k(x/\eps) e^{ikx}\dot{W}_k(t)\;.
\end{align*}
Indeed, since $W_k$ is a complex Brownian motion, rotating it by $2\pi k\eps n$ does not change its distribution.
Conversely, cell-translation invariance of the noise is equivalent to the fact that its covariance operator $C_\eps$
commutes with the translation operator $T_\eps$ given by $T_\eps f(x) = f(x+2\pi \eps)$.
The spectrum of $T_\eps$ consists of $\{e^{ik\eps}\,:\, k \in \integers\}$, with corresponding eigenspaces given
by $V_k = \{q(x/\eps) e^{ikx}\}$, where $q$ is periodic with period $2\pi$. As a consequence, there is
no loss of generality in assuming the representation \eref{e:reprq}.

Thus, we restrict our attention to the following class of SPDEs, written in the notation of \cite{daprato92}:
\begin{equation}
du_\eps(x,t) = \gen_\eps u_\eps(x,t) dt + \sum_{k \in \integers} q_k(x/\eps)e^{ikx}dW_k(t)\;.\label{e:SPDE1} 
\end{equation}
Again, we will always assume that $u_\eps$ satisfies periodic boundary conditions on $[0,2\pi]$. By linearity,
we can and will restrict ourselves to the case of vanishing initial conditions.
We will always assume certain regularity conditions on $b$ and $\sigma$, as well as a centering condition, which is a standard requirement of homogenisation problems. This is detailed in Assumption \ref{ass.elliptic} below.
\begin{rmk}\label{rmk.geometry}
Unlike several recent studies \cite{duan07,wang07} we do not consider periodically perforated spatial domains. Instead, we assume that our domain $[0,2\pi]$ has been split into cells of size $2\pi \eps$ and that diffusions behave identically in each cell. This is implemented through the periodicity of $b$, $\sigma$ and $q_k$. Thus, all composite-type geometry comes through the periodicity of the generator $\gen_\eps$ and the infinite dimensional noise; the spatial domain $[0,2\pi]$ does not depend on $\eps$ in any way. However, we do require that the domain be partitioned in to cells of size $2\pi \eps$. It is therefore natural to require that $\eps^{-1} \in \naturals$ so that $[0,2\pi]$ contains an integer number of cells.  
\end{rmk}
We have already seen that taking $q_k=1$ results in the surprising limit \eqref{e.surprise}. However, if we chose $q_k=|k|^{-1}$ then the forcing term would be a continuous Gaussian process in $\Ltwopi$, and by classical results $u_\eps$ would converge to the unsurprising limit, as in \eqref{e.naive}. We would like to classify those choices of $q_k$ that result in the surprising limit, and those that result in the unsurprising limit.  
\par
Firstly, we will identify a large class of signals that result in the unsurprising limit. In particular, these signals need not be continuous processes in $\Ltwopi$. To guarantee the unsurprising limit, we need some control over the coefficients of the noise $q_k$ when $k$ is large, as well as a suitable regularity assumption. If we assume that the coefficients decay algebraically as $k\to \infty$, then we are able to show that solutions converge to the correct limit and that this convergence occurs in $L^2(P)$. In particular, the quantity $\norm{q_k}$ must decay like $|k|^{-\alpha}$ as $k\to\infty$, for some $\alpha \in (0,1)$. The precise condition is detailed in Assumption \ref{ass.strongnoise}. With these conditions in place, we will prove the following.
\begin{thm}\label{thm:formulation1}
Suppose the SPDE \eqref{e:SPDE1} satisfies Assumptions \ref{ass:elliptic} and \ref{ass.strongnoise}. Then the solutions $u_\eps$ converge to the solutions of
\begin{equation}\label{e:limiting1}
du(x,t) = \mu \del_x^2 u(x,t) dt + \sum_{k \in \integers} \innerprod{q_k,\rho} e^{ikx} dW_k(t) \;,  
\end{equation}
in the sense that there exists $C_T>0$ and $\theta > 0$ such that
\[
\E \suptime |\innerprod{u_\eps(t)-u(t),\vphi}|^2 \leq C_T\eps^{\theta} \;,
\]
for all $\vphi \in H^{s}$ with large enough $s$.  
\end{thm}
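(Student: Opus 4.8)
The plan is to pass to the mild (Duhamel) formulations of \eqref{e:SPDE1} and \eqref{e:limiting1}, driven by the \emph{same} Brownian family $\{W_k\}$, and to reduce everything to a deterministic estimate on the semigroup $\Seps(t):=e^{t\gen_\eps}$, i.e.\ the transition semigroup of $X_\eps$. Writing $g_k^\eps(x):=q_k(x/\eps)e^{ikx}$ and $P_r:=e^{r\mu\del_x^2}$ for the limiting heat semigroup (so that $P_r e^{ik\cdot}=e^{-\mu k^2 r}e^{ik\cdot}$), It\^o's isometry gives, for each fixed $t$,
\[
\E\,|\innerprod{u_\eps(t)-u(t),\vphi}|^2=\sum_{k\in\integers}\int_0^t |D_k^\eps(r)|^2\,dr\;,\qquad
D_k^\eps(r):=\innerprod{\Seps(r)g_k^\eps-\innerprod{q_k,\rho}\,P_r e^{ik\cdot},\,\vphi}\;.
\]
Thus, for fixed $t$, the theorem reduces to the bound $\sum_k\int_0^T|D_k^\eps(r)|^2\,dr\le C_T\eps^{\theta}$, the supremum over $t$ being handled at the end. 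The crude estimate $\norm{\Seps(r)}_{\Ltwopi\to\Ltwopi}\le C$ uniformly in $\eps,r$ (which holds because $\Seps$ is a contraction on $L^2$ of its invariant density $\rho(\cdot/\eps)$, and $\rho$ is bounded above and below) already yields $|D_k^\eps(r)|\lesssim\norm{q_k}\norm{\vphi}$; the real work is to extract the gain $\eps^\theta$ while keeping summability in $k$.

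The analytic engine is the Floquet--Bloch decomposition, available precisely because $\eps^{-1}\in\naturals$. The space $\Ltwopi$ splits into the $\eps^{-1}$ orthogonal blocks $\mathcal V_{k_0}=\overline{\mathrm{span}}\{e^{i(k_0+j\eps^{-1})x}:j\in\integers\}$, each invariant under $\gen_\eps$, and on $\mathcal V_{k_0}$, in the cell variable $y=x/\eps$, one has $\gen_\eps\cong\eps^{-2}\gen_\theta$ with $\theta=k_0\eps$ and $\gen_\theta:=b(y)(\del_y+i\theta)+\tfrac12\sigma^2(y)(\del_y+i\theta)^2$. Since $\gen_0=\gen$ generates an ergodic diffusion on the circle it has a spectral gap with $0$ a simple eigenvalue (left eigenfunction $\rho$, right eigenfunction $\mathbf 1$), so analytic perturbation theory for the quadratic family $\theta\mapsto\gen_\theta$ produces, for $|\theta|$ small, a simple eigenvalue $\lambda(\theta)=-\mu\theta^2+\bigoh(\theta^3)$ and projection $\Pi_\theta=\innerprod{\cdot,\rho}\mathbf 1+\bigoh(\theta)$, the absence of a linear term in $\lambda$ being exactly the centering condition of Assumption \ref{ass:elliptic}, with the remaining spectrum of $\gen_\theta$ staying in $\{\mathrm{Re}<-\delta\}$ uniformly near $\theta=0$. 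Writing $g_k^\eps$ in the block $k_0\equiv k\pmod{\eps^{-1}}$ — where it becomes $q_k(y)e^{i\ell y}$ with $\ell=(k-k_0)\eps$ — one gets, for $|k|\lesssim\eps^{-1}$ (so $\ell=0$ and $\theta=k\eps$ is small), that $\Seps(r)g_k^\eps=e^{r\eps^{-2}\lambda(k\eps)}e^{ikx}\bigl(\innerprod{q_k,\rho}+\bigoh(k\eps)\bigr)+\bigoh\bigl(e^{-\delta r\eps^{-2}/2}\norm{q_k}\bigr)$, together with $e^{r\eps^{-2}\lambda(k\eps)}=e^{-\mu k^2 r}(1+\bigoh(rk^3\eps))$ in the relevant range. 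Pairing with $\vphi$, integrating against the heat factor $e^{-\mu k^2 r}$ and using the fast decay of $\hat\vphi$, this regime contributes $\bigoh(\eps^2)$ per mode with $k$-summable weights, as soon as $s$ is large enough that $\sum_k\norm{q_k}^2(1+|k|)^{-2s}<\infty$.

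The genuinely delicate contributions are the \emph{resonant} modes $|k|\gtrsim\eps^{-1}$, where $\ell\neq0$: the cell datum $q_k(y)e^{i\ell y}$ then has a component on the slow eigenspace proportional to $\widehat{q_k\rho}(-\ell)$, and when the quasi-momentum $\theta=k_0\eps$ is itself small this component does \emph{not} relax, so $\Seps(r)g_k^\eps$ retains a non-negligible piece at the slow Fourier mode $k_0$. This is exactly the mechanism producing the surprising limit \eqref{e.surprise} in the case $q_k\equiv1$, and controlling it is the crux of the proof. Assumption \ref{ass.strongnoise} is designed precisely to kill it: the decay $\norm{q_k}\lesssim|k|^{-\alpha}$ forces $|\widehat{q_k\rho}(-\ell)|\lesssim|k|^{-\alpha}\lesssim\eps^\alpha$ for the relevant $k$, while the accompanying uniform regularity bound on the $q_k$ makes $\widehat{q_k\rho}(-\ell)$ decay rapidly in $\ell$; summing these contributions (organised by the slow index $k_0$ and by $\ell$) against the decaying $\hat\vphi$ gives a bound of order $\eps^{2\alpha}$ for the resonant part. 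The same reasoning absorbs the intermediate band where $\ell=0$ but $\theta=k\eps$ lies outside the perturbative window.

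Finally, to replace $t$ by $\suptime$, note that $t\mapsto\innerprod{u_\eps(t)-u(t),\vphi}$ is a centred Gaussian process, so it suffices to combine the bound above (uniform in $t\in[0,T]$) with a uniform-in-$\eps$ equicontinuity estimate $\E|\innerprod{u_\eps(t)-u(t),\vphi}-\innerprod{u_\eps(s)-u(s),\vphi}|^2\le C_T|t-s|^{\gamma}$, which follows from the same Bloch representation by differentiating in $r$, the factors $e^{-\mu k^2 r}$ again providing $k$-summability, and then to invoke a Gaussian maximal inequality (or the factorisation method of \cite{daprato92}). This costs an arbitrarily small decrease of $\theta$, and the bookkeeping of the regimes above fixes the admissible $s$, which must grow as $\alpha\downarrow0$. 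The one hard point throughout is the resonant regime: it is precisely the effect the theorem asserts to be asymptotically negligible, and quantifying its size under Assumption \ref{ass.strongnoise} is where essentially all the effort lies.
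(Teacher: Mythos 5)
Your argument is correct in substance but takes a genuinely different technical route from the paper. The paper never diagonalises $\gen_\eps$; instead it works with the \emph{adjoint} semigroup acting on a single low mode, $\Seps^*(t)e_m \approx \rho(\cdot/\eps)e_m e^{-\mu m^2 t} + f_\eps^{\BL} + R_\eps$, obtained by the classical two-scale expansion with a corrector and a temporal boundary layer (Section~\ref{ss.estimating}), and then reads off the resonance $k=m+l/\eps$ from the identity $\innerprod{q_k^\eps e_k,\rho^\eps e_m}=\innerprod{q_{m+l/\eps}e_l,\rho}$; your Floquet--Bloch decomposition into the invariant blocks $\mathcal V_{k_0}$ with fibred operators $\eps^{-2}\gen_\theta$ and analytic perturbation of the simple eigenvalue at $\theta=0$ encodes exactly the same structure (same slow mode $\rho$, same resonant coefficient $\widehat{q_k\rho}(-\ell)$, same use of $\norm{q_k}\lesssim|k|^{-\alpha}$ to get $\eps^{2\alpha}$ and of the $H^1$ bound on $\qbar_k$ for summability in $\ell$, cf.\ Lemmas~\ref{lem:claim1} and~\ref{l.split}, same Kolmogorov argument for the supremum in time, cf.\ Lemma~\ref{l.kolmogorov}). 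What your approach buys is an exact diagonalisation with no secular growth in the remainder and a uniform treatment of all modes; what the paper's approach buys is explicit quantitative remainder bounds ($\norm{R_\eps(t)}\lesssim\eps m$, etc.) that are recycled verbatim in Theorems~\ref{thm:weaklimit} and~\ref{thm.STWN}, together with the split at $|m|<\eps^{-\beta}$ and the a priori bound of Lemma~\ref{l.apriori}, whose optimisation over $\beta$ is what produces the explicit rate $\theta_0(s)$. One point you should make explicit: for $\alpha\le 1/2$ the noise is not Hilbert--Schmidt, so within each resonant block the \emph{fast-relaxing} components $(1-\Pi_\theta)(q_k e_\ell)$ cannot be summed over $\ell$ using only $\norm{q_k}^2$ and the decay of $\hat\vphi(k_0)$ (which is constant across the block); their $\ell$-summability must again be extracted from the $H^1$ regularity of $\qbar_k$ exactly as for the slow components — this is the role of Lemma~\ref{l.split} inside Lemma~\ref{lem:claim2} — but the mechanism is one you already invoke, so this is an omission of bookkeeping rather than of an idea.
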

\begin{rmk}\label{rmk.HS}
Past results \cite{MR2072382,duan07} rely on the noise being Hilbert-Schmidt in the sense that
\[
\sum_k \norm{q_k}^2 < \infty\;.
\]
It is important to note that this condition does not imply our condition on the $\norm{q_k}$. Indeed, one can easily exhibit a sufficiently sparse sequence $\norm{q_k}$ that is square summable but which only converges logarithmically to zero. On the other hand, there are many situations where the noise is not Hilbert-Schmidt, that do fall into our framework. With only the Hilbert-Schmidt assumption, one can still prove via a tightness argument that the SPDE \eqref{e:SPDE1} has a weak limit and apply homogenisation techniques, similar to those found in \cite{duan07}, to show that the limiting SPDE is indeed \eqref{e:limiting1}. However, we will not treat this case as it is somewhat incongruous with the existing framework.
\end{rmk}
\begin{rmk}
Although not immediately clear, this is indeed the unsurprising limit in the sense of \eqref{e.naive}. To see this, pick $q_k(x/\eps)=\hat{q}_k |k|^{-\alpha}$. It is easy to see that, since $\innerprod{\rho,1}=1$, the noise in the limiting SPDE \eqref{e:limiting1} is the same as the original noise, as was the case in the classical result \eqref{e.naive}.   
\end{rmk}
This result is reminiscent of previous results \cite{duan07,wang07}, but stronger in the sense that genuine mean-squared convergence is obtained. Moreover, the result comes with rates of convergence. These are some of the perks enjoyed by a Fourier analytic framework, which we employ in place of the tightness arguments usually found in homogenisation problems. Of course, we still have weak convergence in a variational sense.
\par
There are some important things to note concerning the limiting SPDE \eqref{e:limiting1}. Firstly, it is a stochastic heat equation with additive noise, and that noise comes with the same spatial regularity as the noise in the original SPDE. That is, the coefficients of $W_k$ decay with the same rate. Secondly, if we choose the noise to satisfy the centering condition $\innerprod{q_k,\rho}=0$ for each $k\in\integers$, then the solution $u_\eps$ will converge strongly to zero as $\eps \to 0$. In other words, the presence of noise will have vanishingly small effect on the system \eqref{e:SPDE1} when $\eps$ is small. It is natural to ask whether we can find the largest vanishing term as $\eps \to 0$. To obtain this term, we scale up the solution $u_\eps$ by some cleverly chosen inverse factor of $\eps$ and then seek a non-zero solution. For this procedure to work, we need to have very precise control over the coefficients $q_k$ when $k$ is large. Namely, we require that there exists some $\alpha \in (0,1)$ and a sufficiently regular function $\qbar$ such that $|k|^\alpha q_k \to \qbar$ in $\Ltwopi$ as $|k|\to \infty$. One can check that these assumptions imply those made for the previous theorem. The precise assumptions are detailed in Assumption \ref{ass.weaknoise}. With these conditions, we can prove the following.
\begin{thm}\label{thm:formulation2}
Suppose the SPDE \eqref{e:SPDE1} satisfies Assumptions \ref{ass:elliptic} and \ref{ass.weaknoise} for some decay exponent $\alpha \in (0,1)$ and $\innerprod{q_k,\rho}=0$ for all $k\in \integers$. Then there exists a process $\tildeu_\eps$ equal in law to $u_\eps$ but defined on a different probability space, such that the rescaled solutions $\eps^{-\alpha} \tildeu_\eps$ converge to the solutions of 
\begin{equation}\label{e.limiting2}
dv(x,t) = \mu \del_x^2 v(x,t) dt + \norm{\qbar \rho}_{-\alpha} \sum_k e^{ikx} d\tildeW_k(t)   
\end{equation}
in the sense that 
\[
\lim_{\eps \to 0} \E \suptime |\innerprod{\eps^{-\alpha}\tildeu_\eps(t)-v(t),\vphi}|^2=0\;,
\]
for all $\vphi \in H^{s}$ with large enough $s$. 
\end{thm}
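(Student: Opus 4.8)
\medskip
\noindent\emph{Proposed strategy.} The plan is to exploit the fact that, for each fixed $\eps$, the solution of \eqref{e:SPDE1} with vanishing initial data is a centred Gaussian process, so that the whole question reduces to a covariance computation. Writing the solution in mild form $u_\eps(t)=\int_0^t e^{(t-s)\gen_\eps}\,dW(s)$ (in the notation of \cite{daprato92}), one has for a test function $\vphi$
\begin{equation*}
  \innerprod{u_\eps(t),\vphi}\;=\;\sum_{k\in\integers}\int_0^t\innerprod{q_k(\cdot/\eps)\,e^{ik\cdot},\;e^{(t-s)\gen_\eps^{\ast}}\vphi}\,dW_k(s)\;,
\end{equation*}
where $\gen_\eps^{\ast}$ denotes the $\Ltwopi$-adjoint of $\gen_\eps$, i.e.\ the Fokker--Planck operator. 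By the It\^o isometry, the covariance $\E[\innerprod{u_\eps(t),\vphi}\,\overline{\innerprod{u_\eps(t'),\vphi}}]$ is an explicit time integral of $\innerprod{\cdot,\cdot}$-pairings of the oscillatory profiles $q_k(\cdot/\eps)e^{ik\cdot}$ against $e^{(t-s)\gen_\eps^{\ast}}\vphi$, summed over $k$. The argument then splits into: (i) computing $\lim_{\eps\to0}\eps^{-2\alpha}$ times this covariance; (ii) checking that the limit coincides with the covariance of $\innerprod{v(t),\vphi}$ for $v$ solving \eqref{e.limiting2}; and (iii) upgrading convergence in law to the stated uniform-in-time $L^2(P)$ convergence along a suitable coupling.

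The analytic heart of step (i) is a homogenisation expansion of the adjoint semigroup: with $\rho_\eps(x):=\rho(x/\eps)$, one has $e^{r\gen_\eps^{\ast}}\vphi=\rho_\eps\cdot(e^{r\bar{\gen}}\vphi)+\eps\,\chi_\eps(r)+\bigoh(\eps^2)$ in $\Ltwopi$, where $\bar{\gen}$ is the homogenised (effective) generator appearing in the drift of \eqref{e.limiting2} and $\chi_\eps(r)$ is built from the solution of the standard cell problem for $\gen$; this is the quantitative refinement of the estimates already used in the proof of Theorem \ref{thm:formulation1}. Inserting this expansion into the pairing and passing to Fourier series in $x$, one sees that the Fourier mode $e^{imx}$ of $\vphi$ resonates only with the noise index $k=m-\ell/\eps$, $\ell\in\integers$, with amplitude $\innerprod{q_{m-\ell/\eps},\,e^{i\ell\cdot}\rho}$. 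For $\ell=0$ this amplitude equals $\innerprod{q_m,\rho}$, so the $\ell=0$ contributions assemble exactly the limit of Theorem \ref{thm:formulation1}, which vanishes identically under the present hypothesis $\innerprod{q_k,\rho}=0$: this is precisely why the order-one part of $u_\eps$ disappears. For $\ell\neq0$ one has $|m-\ell/\eps|\sim|\ell|/\eps$ as $\eps\to0$, so the assumption $|k|^{\alpha} q_k\to\qbar$ in $\Ltwopi$ forces $\eps^{-\alpha}\innerprod{q_{m-\ell/\eps},\,e^{i\ell\cdot}\rho}\to|\ell|^{-\alpha}\innerprod{\qbar,\,e^{i\ell\cdot}\rho}$, so each such term contributes at order $\eps^{\alpha}$; summing the squared amplitudes over $\ell\neq0$, and over the free index $m$, reassembles precisely $\norm{\qbar\rho}_{-\alpha}^2$ times the covariance of a stochastic heat equation driven by space--time white noise --- that is, the covariance of $\innerprod{v(t),\vphi}$ for $v$ solving \eqref{e.limiting2}, which settles (ii). One still has to show that the non-resonant terms, together with the contributions of the corrector $\chi_\eps$ and of the $\bigoh(\eps^2)$ remainder, are $o(\eps^{2\alpha})$; the regularity of $\vphi$ (``$s$ large enough'') is used here, as well as to make the sums over $m$ converge.

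For step (iii), note that the resonant contributions are, for each fixed $\eps$, carried by noise indices $k=m-\ell/\eps$ that run off to infinity as $\eps\to0$, so there is no candidate limiting Wiener process on the original probability space; morally, the limiting Brownian motions $\tildeW_k$ in \eqref{e.limiting2} arise as limits of resonant combinations of the original $W_k$. Rather than exhibiting these explicitly, the cleanest route is to deduce, from the convergence of covariances together with a tightness estimate, that $\eps^{-\alpha}\innerprod{u_\eps(\cdot),\vphi}$ converges in law in $C([0,T])$ to $\innerprod{v(\cdot),\vphi}$ --- the tightness following from Gaussianity plus a uniform-in-$\eps$ bound on $\E|\innerprod{\eps^{-\alpha}u_\eps(t)-\eps^{-\alpha}u_\eps(t'),\vphi}|^2$, of the kind obtained for Theorem \ref{thm:formulation1} via the factorisation method. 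Applying the Skorokhod representation theorem jointly over $\eps$ (together with the limit $v$) then produces $\tildeu_\eps$, equal in law to $u_\eps$, with $\eps^{-\alpha}\innerprod{\tildeu_\eps(\cdot),\vphi}\to\innerprod{v(\cdot),\vphi}$ almost surely in $C([0,T])$; since all processes involved are Gaussian with second moments bounded uniformly in $\eps$, this almost-sure convergence upgrades to convergence in $L^2(P)$, which is the assertion.

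The step I expect to be the main obstacle is the interchange of $\eps\to0$ with the summation over the infinitely many noise modes in step (i): the homogenisation expansion of $e^{r\gen_\eps^{\ast}}$ is applied to the rapidly oscillating profiles $q_k(\cdot/\eps)e^{ik\cdot}$, whose frequency content grows with $|k|$, so the remainder and non-resonant estimates must be made uniform in $k$ with a $k$-summable bound after the resonant pairing. It is here, rather than in the formal computation, that the quantitative decay \emph{and} uniform regularity of the $q_k$ postulated in Assumption \ref{ass.weaknoise}, together with the restriction $\alpha\in(0,1)$ --- which ensures that the resonant contribution of order $\eps^{\alpha}$ dominates the genuine homogenisation corrector of order $\eps$ --- are genuinely needed. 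A secondary technical point, needed for the tightness in step (iii), is to obtain the time-modulus estimate with the $\eps^{-\alpha}$ prefactor present, uniformly in $\eps$.
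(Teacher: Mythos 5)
Your steps (i) and (ii) --- the mild formulation, the expansion of $S_\eps^*(t)e_m$, the resonance condition $k=m+l/\eps$, the vanishing of the $l=0$ term under $\innerprod{q_k,\rho}=0$, and the identification of $\lim_{\eps\to0}\eps^{-2\alpha}\sum_{l\neq0}|\innerprod{q_{m+l/\eps}e_l,\rho}|^2=\norm{\qbar\rho}_{-\alpha}^2$ --- are exactly the analytic core of the paper's proof, and your diagnosis of where Assumption \ref{ass.weaknoise} and the restriction $\alpha\in(0,1)$ enter is accurate. One imprecision: the expansion $e^{r\gen_\eps^*}\vphi=\rho_\eps\,(e^{r\bar\gen}\vphi)+\eps\chi_\eps(r)+\bigoh(\eps^2)$ cannot hold uniformly down to $r=0$ (the left side equals $\vphi$ there, not $\rho_\eps\vphi$); the paper inserts a temporal boundary layer $f_\eps^{\BL}(x,r)=\bigl(S_\eps^*(r)(1-\rho^\eps)\bigr)e_m$ whose contribution to the It\^o isometry integral is $\bigoh(\eps^{2-\delta})$ and hence harmless for $\alpha<1$, but it must be there for the remainder bounds to close.

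Where you genuinely diverge is step (iii). The paper does \emph{not} use compactness plus Skorokhod: it fixes the limiting Wiener processes $\{\tildeW_m\}$ first and then constructs, for each $\eps$, a family $\{B_k^\eps\}$ jointly Gaussian with them, with $\E\,\tildeW_m(t)B^\eps_{m+l/\eps}(s)=\bigl(\lambda^l_{\eps,m}/\Lambda_{\eps,m}\bigr)(t\wedge s)$ where $\lambda^l_{\eps,m}=\eps^{-\alpha}\innerprod{q_{m+l/\eps}e_l,\rho}$; the process $\tildeu_\eps$ is then \emph{defined} mode by mode so that its resonant part is exactly $\eps^\alpha\Lambda_{\eps,m}\int_0^te^{-\mu m^2(t-s)}d\tildeW_m(s)$, and equality in law with $u_\eps$ is checked by matching covariances. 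This buys two things: the error is then deterministic-plus-remainder and can be estimated in $L^2(P)$ directly (the paper in fact gets convergence of $\E\suptime\norm{\cdot}_{H^{-s}}^2$, summing over all modes), and the $\tildeW_m$ remain explicitly traceable to the original noise. Your Skorokhod route, as written, has a gap: you establish tightness only of the scalar processes $\eps^{-\alpha}\innerprod{u_\eps(\cdot),\vphi}$ in $C([0,T])$, but the theorem requires a \emph{single} process $\tildeu_\eps$, equal in law to $u_\eps$, for which the convergence holds for \emph{every} $\vphi$; applying Skorokhod separately for each $\vphi$ produces $\vphi$-dependent versions. To repair this you would need convergence in law of the full $H^{-s}$-valued process (tightness in $C([0,T],H^{-s})$, say via the uniform bounds of Lemma \ref{l.apriori} together with a compact embedding $H^{-s'}\hookrightarrow H^{-s}$ and a Kolmogorov--Chentsov time-modulus estimate at that level), and then apply Skorokhod once to the sequence of $H^{-s}$-valued laws. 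That is doable but heavier than the paper's direct coupling, and it loses the explicit identification of $\tildeW_m$.
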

Here the convergence result is weak in both a variational and probabilistic sense. In general, nothing stronger is possible. Although the result looks like convergence in mean-squared, it is merely \emph{disguised} convergence in law
since we must define the limiting solution on a different probability space to the original SPDE. Such results are often obtained artificially using the Skorokhod embedding theorem. In our case however, this is the natural way to write down the result. In particular, for fixed $\eps>0$, the dependencies of $\tildeW_m$ can be traced back to the original BMs. It is worth mentioning that the scaling factor required in order to find this term is in fact $\eps^{-\alpha}$, which is precisely the amount of decay placed on the coefficients $q_k$. In the limiting SPDE \eqref{e.limiting2}, we use the notation 
\[
\norm{f}_{-\alpha} = \left(\sum_{k\in\integers}|k|^{-2\alpha} |\innerprod{f,e_k}|^2 \right)^{1/2}\;,
\]
where $e_k(x) = e^{ikx}$. 

As before, there are several things to note about the SPDE \eqref{e.limiting2}. Firstly, it is again a stochastic heat equation with additive noise, but now all contributions from the original driving noise come from the very high modes, as indicated by the factor $\norm{\qbar \rho}_{-\alpha}$. Thus, the coefficients $q_k$ with low $k$ have no bearing at all on the limit. In particular, if one wanted to approximate the noise by cutting off the sum at a large value of $k$, they would be making a drastic mistake! Moreover, this suggests that $v$ arises due to constructive interference occurring in the very high modes of the noise. The second observation to make is that no matter what spatial regularity is possessed by the noise in the original SPDE, the limiting SPDE is always driven by space-time white noise. As one might guess, the factor $\eps^{-\alpha}$ essentially scales away the decay on the coefficients $q_k$ and hence destroys the regularity of the driving noise.  

The previous theorem may seem a bit off topic, as we are trying to determine how choices of $q_k$ affect the limiting SPDE. However, the following theorem tells us that the second order term found in Theorem \ref{thm:formulation2} acts as the \emph{bridge} between the surprising limit and the unsurprising limit. 
In particular, we will show that the surprising limit occurs precisely when this second order term becomes non-vanishing. We can see in \eqref{e.STWN} that space-time white noise falls into the `$\alpha=0$ class', in the context of the previous theorems, since obviously $q_k=1$ does not decay. Since the second order term was shown to be $\bigoh(\eps^{\alpha})$, one would expect this term to become $\bigoh(1)$ and hence contribute to the limit in the space-time white noise case. This suggests that the second order term is precisely the difference between the surprising limit and the unsurprising limit. The following theorem proves this to be the case not just for \eqref{e.STWN} but for all SPDEs driven by noise in the $\alpha=0$ class. 

The only added requirement for noise to be in this class is that there exists $\qbar \in H^1$ such that $q_k\to\qbar$ as $k\to \infty$ and that this convergence happens with fast enough rate. The precise conditions are found in Assumption \ref{ass.STWN}. We have that following result.     
\begin{thm}\label{thm.formulation3}
Suppose the SPDE \eqref{e:SPDE1} satisfies Assumptions \ref{ass:elliptic} and \ref{ass.STWN}. Then there exists $\hatu_\eps$ equal in law to $u_\eps$, but defined on a different probability space, such that $\hatu_\eps$ converges to the solutions of
\begin{equation}\label{e:limiting3}
d\hatu(x,t) = \mu \del_x^2 \hatu(x,t) dt + \sum_{k \in \integers} (|\innerprod{q_k,\rho}|^2 - |\innerprod{\qbar,\rho}|^2 + \norm{\qbar \rho}^2)^{1/2} e^{ikx} d\hatW_k(t)   \;,
\end{equation}
in the sense that 
\[
\lim_{\eps \to 0} \E \suptime |\innerprod{\hatu_\eps(t)-\hatu(t),\vphi}|^2 =0 \quad \text{for all $\vphi \in H^{s}$}
\]
for large enough $s$.  
\end{thm}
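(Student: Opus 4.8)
The plan is to use that \eqref{e:SPDE1} is linear with additive Gaussian noise and vanishing initial data, so each $u_\eps(t)$ is a centred Gaussian field in $\Ltwopi$ and everything reduces to second moments; by linearity and the rapid decay of $\hat\vphi_\ell$ for $\vphi\in H^s$ it suffices to analyse $\innerprod{u_\eps(t),e_\ell}$ for a single mode $e_\ell(x)=e^{i\ell x}$, keeping enough uniformity in $\ell$ and $t\in[0,T]$ to reassemble the test function (this is where large $s$ enters). The structural device is a Bloch--Floquet decomposition of $\gen_\eps$: setting $\gen_k h:=e^{-iky}\gen(e^{iky}h)$ for the twisted generators on $\Ltwopi$ (so $\gen_0=\gen$ and $\gen_{k+1}=e^{-iy}\gen_k e^{iy}$), one checks that $e^{t\gen_\eps}$ preserves each subspace $\{e^{ikx}h(x/\eps)\}$ and acts there by $e^{ikx}h(x/\eps)\mapsto e^{ikx}(e^{(t/\eps^2)\gen_{k\eps}}h)(x/\eps)$. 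Since the $k$-th noise coefficient in \eqref{e:SPDE1} lies in exactly this subspace (with $h=q_k$), Duhamel's formula together with the identity $\innerprod{e^{ikx}h(x/\eps),e^{i\ell x}}=\innerprod{h,e_{(k-\ell)\eps}}$ --- valid when $(k-\ell)\eps\in\integers$, zero otherwise, using $\eps^{-1}\in\naturals$ --- gives
\[
\innerprod{u_\eps(t),e_\ell}=\sum_{j\in\integers}\int_0^t\bigl\langle e^{((t-s)/\eps^2)\gen_{\ell\eps+j}}\,q_{\ell+j\eps^{-1}}\,,\,e_j\bigr\rangle\,dW_{\ell+j\eps^{-1}}(s)\;.
\]

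The analytic heart is spectral perturbation theory for $\gen_k$ near $k=0$. The eigenvalue $0$ of $\gen_0$ is simple with right eigenfunction $1$ and left eigenfunction $\rho$, the rest of the spectrum lying in $\{\mathrm{Re}\,z\le-\delta\}$ for some $\delta>0$; so for $|k|$ small there is a simple leading eigenvalue $\lambda(k)$ with eigenfunctions $\psi_k\to1$, $\rho_k\to\rho$, the remaining spectrum still having real part $\le-\delta/2$. The centering condition forces $\lambda'(0)=0$, and $\lambda(k)=-\mu k^2+o(k^2)$, which is how the homogenised operator $\mu\del_x^2$ of \eqref{e:limiting3} arises. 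Using that $\gen_{\ell\eps+j}$ is conjugate to $\gen_{\ell\eps}$ by $e^{ijy}$, so that $\psi_{\ell\eps+j}=e^{-ijy}\psi_{\ell\eps}$ and $\rho_{\ell\eps+j}=e^{ijy}\rho_{\ell\eps}$, one obtains for fixed $\ell$ as $\eps\to0$, with $T:=t-s$,
\[
\bigl\langle e^{(T/\eps^2)\gen_{\ell\eps+j}}q_{\ell+j\eps^{-1}},e_j\bigr\rangle=e^{T\lambda(\ell\eps)/\eps^2}\,\innerprod{q_{\ell+j\eps^{-1}},e^{ij\cdot}\rho_{\ell\eps}}\,\innerprod{\psi_{\ell\eps},1}+\bigoh\bigl(e^{-\delta T/(2\eps^2)}\bigr)\;,
\]
and $T\lambda(\ell\eps)/\eps^2\to-\mu\ell^2 T$, $\rho_{\ell\eps}\to\rho$, $\psi_{\ell\eps}\to1$, while $q_{\ell+j\eps^{-1}}\to\qbar$ for $j\ne0$ (and equals $q_\ell$ for $j=0$) by Assumption \ref{ass.STWN}. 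Feeding this back, $\innerprod{u_\eps(t),e_\ell}$ converges --- in law, for the moment --- to $\sum_j d_{\ell,j}\int_0^t e^{-\mu\ell^2(t-s)}\,dW_{\ell+j\eps^{-1}}(s)$ with $d_{\ell,0}=\innerprod{q_\ell,\rho}$ and $d_{\ell,j}=\innerprod{\qbar,e^{ij\cdot}\rho}$ for $j\ne0$; Parseval then gives $\sum_j|d_{\ell,j}|^2=|\innerprod{q_\ell,\rho}|^2-|\innerprod{\qbar,\rho}|^2+\norm{\qbar\rho}^2$, exactly the squared coefficient of $e_\ell$ in \eqref{e:limiting3}. (The $j=0$ term is the content of Theorem \ref{thm:formulation1}; the resonant $j\ne0$ terms are the $\alpha=0$ endpoint of the mechanism behind Theorem \ref{thm:formulation2}; the new point is that the two families of contributions are carried by mutually independent Brownian motions and therefore simply add.)

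To turn convergence in law into the stated $L^2(P)$ bound one builds the coupling explicitly. The Brownian families $\{W_{\ell+j\eps^{-1}}:j\in\integers\}$ attached to two modes $\ell,\ell'$ coincide iff $\eps^{-1}\mid(\ell-\ell')$ and are disjoint otherwise; hence, once $\eps$ is small enough that $\eps^{-1}$ exceeds twice the band of modes carrying all but a negligible fraction of $\innerprod{u_\eps(t),\vphi}$, the effective Brownian motions $\widehat W^\eps_\ell:=(c^\eps_\ell)^{-1}\sum_j d^\eps_{\ell,j}W_{\ell+j\eps^{-1}}$ --- with $d^\eps_{\ell,j}$ the pre-limit coefficients and $|c^\eps_\ell|^2=\sum_j|d^\eps_{\ell,j}|^2$ --- are mutually independent standard Brownian motions, modulo the reality relation $W_{-k}=W_k^\star$. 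With this choice the leading part of $\innerprod{u_\eps(t),e_\ell}$ is \emph{exactly} $c^\eps_\ell\int_0^t e^{(t-s)\lambda(\ell\eps)/\eps^2}\,d\widehat W^\eps_\ell(s)$, which converges to $c_\ell\int_0^t e^{-\mu\ell^2(t-s)}\,d\widehat W_\ell(s)$, the mode-$\ell$ component of \eqref{e:limiting3}. One then realises $\hatu_\eps$ on an enlargement of the probability space carrying $\hatu$: identify $\widehat W^\eps_\ell$ with $\hatW_\ell$ for the in-band modes, adjoin enough independent noise to reconstruct each full family $\{W_{\ell+j\eps^{-1}}\}_j$ from $\hatW_\ell$ and the weights $d^\eps_{\ell,j}$, and drive \eqref{e:SPDE1} by the reconstructed Brownian motions; the out-of-band modes contribute negligibly to both sides and need not be matched. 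This produces $\hatu_\eps\stackrel{d}{=}u_\eps$ whose tested modes differ from those of $\hatu$ by stochastic integrals with $L^2$ norm tending to $0$; the supremum over $t\in[0,T]$ is absorbed by a Kolmogorov-type estimate on these Gaussian processes (control of increments), and the sum over $\ell$ by the decay of $\hat\vphi_\ell$.

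I expect the main obstacle to be the control of the resonant sum over $j$: one must show that $\sum_{j\ne0}$ of the error contributions --- the $\bigoh(e^{-\delta T/(2\eps^2)})$ spectral tail, the eigenfunction corrections $\rho_{\ell\eps}-\rho$ and $\psi_{\ell\eps}-1$, and above all the mismatch $q_{\ell+j\eps^{-1}}-\qbar$ --- tends to $0$ in $L^2(P)$, with enough uniformity in $\ell$ and $t$. This is precisely where the quantitative hypotheses of Assumption \ref{ass.STWN} (the rate of $q_k\to\qbar$, together with $\qbar\in H^1$) and uniform-in-$\eps$ spectral estimates enter, since the crude bounds on individual terms are not summable in $j$; a little extra bookkeeping is also required for the mode $\ell=0$ (and $\ell=\eps^{-1}/2$ when this is an integer), where the reality constraint couples $W_{\ell+j\eps^{-1}}$ with $W_{\ell-j\eps^{-1}}$.
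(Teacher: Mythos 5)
Your proposal is correct in its overall architecture and arrives at the limit by the same mechanism as the paper: the resonances $k=\ell+j\eps^{-1}$, the identification of the leading term of $\innerprod{u_\eps(t),e_\ell}$ as an OU process with variance $\sum_j|d^\eps_{\ell,j}|^2\to|\innerprod{q_\ell,\rho}|^2-|\innerprod{\qbar,\rho}|^2+\norm{\qbar\rho}^2$ via Parseval, and an explicit Gaussian coupling in which $\hatW_\ell$ is (up to normalisation) the $d^\eps_{\ell,j}$-weighted combination of the $W_{\ell+j\eps^{-1}}$ --- this is exactly the paper's construction of the pair $(\hatW_m,B^\eps_k)$ with covariance $\lambda^l_{\eps,m}/\Lambda_{\eps,m}$, merely described in the opposite direction. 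Where you genuinely diverge is in the analytic core: the paper approximates the \emph{adjoint} semigroup by the classical two-scale expansion $S^*_\eps(t)e_m\approx\rho^\eps e_m e^{-\mu m^2t}+f^\BL_\eps+R_\eps$ (Lemmas \ref{l.representation}--\ref{l.remainder}), whereas you use a Bloch--Floquet decomposition of $\gen_\eps$ into twisted generators $\gen_{k\eps}$ and analytic perturbation of the simple eigenvalue at $\kappa=0$. Both yield the same leading behaviour with exponentially small corrections for $t\gg\eps^2$; your route makes the degeneracy of the decay rate across the whole resonant family $\{\ell+j\eps^{-1}\}_j$ transparent (they share the eigenvalue $\lambda(\ell\eps)$), while the paper's expansion comes packaged with the quantitative remainder bounds in $\Ltwopi$ and $H^1$ that are then fed into Lemma \ref{l.split} and the Kolmogorov criterion.

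Two points where your sketch leans on more than bookkeeping. First, the summability over $j$ of the error terms, which you correctly flag as the main obstacle: the paper resolves it by splitting $q_k=(q_k-\qbar)+\qbar$ and trading $H^1$ regularity for decay $|j|^{-\eta}$ via Lemma \ref{l.split}, which is exactly where Assumption \ref{ass.STWN} is consumed; your plan is compatible with this but you would need the analogous uniform-in-$\kappa$ bounds on $\psi_\kappa-1$, $\rho_\kappa-\rho$ in $H^1$. Second, your claim that the out-of-band modes ``contribute negligibly to both sides'' is not free for $\hatu_\eps$: under Assumption \ref{ass.STWN} one only has $\E\suptime\norm{u_\eps(t)}^2\lesssim\eps^{-2-\delta}$ (Lemma \ref{lem.apriori2}), i.e.\ the high modes carry a \emph{diverging} amount of $L^2$ mass, and they are beaten only by the weight $(1+m^2)^{-s}$ for $s$ above the threshold $s_\eta$. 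So an a priori bound of this type is an unavoidable ingredient, not a remark; it is also what ultimately determines ``large enough $s$'' in the statement.
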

As one might expect, this result is almost a combination of the two previous results, only a few extra ingredients are needed to prove it. In the $\norm{\cdot}_{-\alpha}$ notation of Theorem \ref{thm:formulation2}, we have that
\[ -|\innerprod{\qbar,\rho}|^2 + \norm{\qbar \rho}^2 = \norm{\qbar \rho}_0^2\;, \] 
which is precisely the contribution from the second order term (squared), so that \eqref{e:limiting3} really is a combination of the first order limit in \eqref{e:limiting1} and the second order limit in \eqref{e:limiting3}. Note that instead of the noise being comprised of the sum of the first order and second order terms, we have the square-root of the sum of the squares. This is simply because we want to write each term in the noise as a single Gaussian, rather than a sum of two independent Gaussians. Just as in Theorem \ref{thm:formulation2}, the BMs $\hatW_m$ are, for fixed $\eps>0$ defined in terms of the original BMs.
\par
%%%%%%%%
To prove these three convergence results, we develop several tools that are useful when dealing with any SPDE whose underlying diffusion is driven by $\gen_\eps$. Firstly, we develop a relationship between the interpolation spaces generated by $\gen_\eps$ and the usual Sobolev spaces. This is useful in determining which function spaces contain our solutions (uniformly in $\eps$) and furthermore determining where convergence occurs. Secondly, we show that the effect of the semigroup $\Seps$ generated by $\gen_\eps$ on a certain class of functions is approximated well by the heat semigroup. This is akin to the well-known fact that $\gen_\eps \Rightarrow \mu\del_x^2$, as discussed earlier.\par
The article is structured in the following way. In Section \ref{s.formulation}, we give a precise formulation of the main SPDE and detail the structural assumptions. In Section \ref{s.preliminary} we develop some tools necessary for the proof of the convergence theorems. In Section \ref{s:convergence} we rigorously state and prove all three convergence theorems.
%%%%%%%%%%%

\subsection*{Acknowledgements}

{\small
We would like to thank T.~Souganidis for pointing out the type of convergence considered in
Corollary~\ref{corr:convolution}.
Financial support for MH was kindly provided by EPSRC grant EP/D071593/1, by the 
Royal Society through a Wolfson Research Merit Award, and by the Leverhulme Trust through a Philip Leverhulme Prize.
DK was supported by a Warwick Postgraduate Research Scholarship.
}

\section{Formulation of the SPDE and some notation}\label{s.formulation}

Recall that $\Ltwopi$ denotes the complex $L^2$ space
with its inner product normalised as 
\[
\innerprod{f,g} = \frac{1}{2\pi} \int_0^{2\pi} f g^* dx  \;,
\]
and corresponding norm $\norm{\cdot}$. We denote elements of the orthonormal Fourier basis by $e_k(x) = e^{ikx}$. We will also denote the usual $L^{\infty}$ norm by $\norm{\cdot}_{\infty}$. We define $\mathcal{C}_b^2$ as the subspace of $\Ltwopi$ of bounded, continuous functions with two bounded, continuous derivatives. We measure regularity through the Sobolev spaces $H^s$ which we define as the completion of $\Ltwopi$ under the norm
\[
\norm{\cdot}_{H^s} = \norm{(1-\del_x^2)^{s/2} \cdot}\;,
\]
for any $s \in \reals$. We shall also make use of the following Sobolev-like semi-norm
\begin{equation}\label{e:seminorm}
\norm{f}_{-s} = \left( \sum_{k \in \integers} |k|^{-2s} |\innerprod{f,e_k}|^2   \right)^{1/2}\;,
\end{equation}
which can only be defined on $f$ with $\innerprod{f,1}=0$. One can therefore think of this semi-norm as the norm $\norm{(-\del_x^2)^{-s}\cdot}$ defined on the space of mean-zero functions. We denote by $\norm{\cdot}_\HS$ the  Hilbert-Schmidt norm on linear operators that map $\Ltwopi$ into itself. As a shorthand we will write 
\[
f^\eps (x) = f(x/\eps)\;,
\]
when we want to omit the function's dependence on $x$. Finally, we will use the notation $f \lesssim g$ to imply that $|f/g|$ can be bounded by some constant that is independent of parameters involved in the expression. The precise independence will be clear from the context. 
%%%%%
%%%%%%
\subsection{Formulation of the equation}\label{ss:formulation}
Let $b$ and $\sigma$ be twice continuously differentiable $2\pi$-periodic functions and define the differential operator $\gen_\eps$ as in \eqref{e:generator} and likewise define the unscaled operator $\gen$ as in \eqref{e:unscaledgenerator}. Following \cite{papa77,bensoussan78}, we require some conditions on the generator $\gen_\eps$ for the homogenization problem to have a limit.

\begin{ass}\label{ass:elliptic}\label{ass.elliptic}
Assume that $b, \sigma \in \mathcal{C}_b^2$ and that the centering condition
\begin{equation}
\int_0^{2\pi} \frac{b(x)}{\sigma^2(x)} dx =0\;,
\end{equation}
is satisfied. Furthermore, $\sigma$ is uniformly elliptic, namely
\begin{equation}\label{e:densitybound}
0< \delta < \sigma(x) < \delta' < \infty \;,
\end{equation}
for some fixed $\delta$ and $\delta'$. 
\end{ass}

\begin{remark}
One can check that the centering condition implies that
\begin{equation}
\int_0^{2\pi} b(x)\rho(x)dx = 0\;,
\end{equation}
where $\rho$ is the solution to $\gen^* \rho =0$ with periodic boundary conditions and satisfying $\innerprod{\rho,1}=1$. We will call $\rho$ the invariant density for $\gen$, despite the fact that it is not normalised to be a probability measure. This centering condition serves the same purpose as subtracting the mean when trying to obtain a central limit theorem.
\end{remark}

\begin{remark}
The smoothness of $b$ and $\sigma$, combined with the ellipticity condition, are sufficient to guarantee that $\rho \in \mathcal{C}_b^2$ and similarly for all positive and negative powers of $\rho$.
\end{remark}

Our main object of interest is the following SPDE, defined on finite temporal and spatial domains
\begin{align}
du_\eps (x,t)& =  \gen_\eps u_\eps(x,t) dt + \sum_{k \in \integers} q_k(x/\eps) e_k(x)dW_k(t)
\quad &&\text{$(x,t)\in [0,2\pi]\times(0,T]$} \label{e:formulationSPDE} \\
u_\eps(0,t)& =  u_\eps(2\pi,t) \quad && {t \in [0,T]}\\
u_\eps(x,0)& =  0 \quad && {x \in [0,2\pi]}\;.
\end{align}
Each $q_k(\cdot)$ is a continuous $2\pi$-periodic element of $\Ltwopi$, taking values in $\reals$ and we require that $q_{-k} =q_k$ for each $k\in\integers$. As stated in Remark \ref{rmk.geometry}, the microscopic parameter $\eps \in (0,1)$ must satisfy $\eps^{-1} \in \naturals$. We define the sequence of Brownian motions $\{W_k \}_{k\in\integers}$ in the following way: $W_0$ is a $\reals$-valued BM, where as $\{W_k\}_{k \geq 1}$ are $\complex$-valued BMs, and $\{W_k\}_{k\geq0}$ are pairwise independent; we then set $W_{-k} = W_k^{\star}$, where $(\cdot)^{\star}$ denotes complex conjugation. Every bi-infinite sequence of Brownian motions considered in the sequel will satisfy this conjugation property. As stated, we assume periodic boundary conditions and take the initial condition to be identically zero. We choose this initial condition as we are only interested in the evolution of the noise through the system. Determining the evolution with a non-trivial initial condition is equivalent to adding the solution to the noiseless problem, which has been well studied \cite{bensoussan78, papa77, pavliotis08}. 

For convenience we introduce the linear operator on $\Ltwopi$ by
\begin{equation}\label{e:Qeps}
Q_\eps e_k (x) = q_k(x/\eps)e_k(x)\;,
\end{equation}
and one can then represent the noise in \eqref{e:formulationSPDE} as $Q_\eps dW$ where $dW$ denotes space-time white noise. We shall now list the assumptions needed to prove Theorems \ref{thm:formulation1}, \ref{thm:formulation2}, \ref{thm.formulation3} respectively. Firstly, we require the following condition to prove Theorem \ref{thm:formulation1}. 
\begin{ass}
\label{ass.strongnoise}
There exists $\alpha \in (0,1)$ such that 
\begin{equation}
\norm{q_k} \lesssim 1\wedge|k|^{-\alpha} \;,
\end{equation}
for each $k\in\integers$. Moreover, if $\alpha \in (0,1/2]$ then we additionally require that 
\begin{equation}
\sup_{k\in\integers}\norm{\qbar_k}_{H^1} < \infty \;,
\end{equation}
where $\qbar_k = q_k / \norm{q_k}$. 
\end{ass}
To prove Theorem \ref{thm:formulation2}, we need slightly different assumptions to those required for Theorem \ref{thm:formulation1}. Namely, we need the following.
 \begin{ass}\label{ass.weaknoise}
There exists $\alpha \in (0,1)$ and $\qbar \in \Ltwopi$ such that 
\begin{equation}\label{e.assweak1}
\lim_{k \to \pm \infty} \norm{ |k|^{\alpha} q_k - \qbar } = 0\;.
\end{equation}
Moreover, if $\alpha \in (0,1/2]$ then we additionally require that
\begin{equation}
\sup_{k\in\integers} \norm{\qbar_k}_{H^1} < \infty \;.
\end{equation}
\end{ass}
Note that \eqref{e.assweak1} guarantees that the bound 
\[
\norm{q_k} \lesssim 1\wedge |k|^{-\alpha}
\]
holds for all $k\in\integers$ and therefore Assumption \ref{ass.weaknoise} implies Assumption \ref{ass.strongnoise}. Unlike in Theorem \ref{thm:formulation1}, having a rate of decay on $q_k$ does not suffice, we now need precise control over how $q_k$ tends to zero as $k\to \infty$.
\par
Recall that Theorem \ref{thm.formulation3} deals with those SPDEs that converge to the so called wrong limit. We claimed that this wrong limit occurred when the limit from Theorem \ref{thm:formulation1} combined with the limit from Theorem \ref{thm:formulation2}, by formally taking $\alpha=0$. Since Assumption \ref{ass.weaknoise} implies Assumption \ref{ass.strongnoise}, our condition on the noise for Theorem \ref{thm.formulation3} should look like Assumption \ref{ass.weaknoise}, with $\alpha=0$. Actually, we need a tiny bit more than this.
\begin{ass}\label{ass.STWN}
We require that there exists $\qbar \in H^1$ and $\eta \in [0,1)$ such that 
\begin{equation}
\sum_{k\in\integers} (1\wedge|k|^{-\eta}) \norm{q_k-\qbar}_{H^1}^2 < \infty\;.
\end{equation}
\end{ass}
At first glance this looks quite a bit stronger than Assumption \ref{ass.weaknoise} with $\alpha=0$. However, Assumption \ref{ass.weaknoise} with $\alpha=0$ implies that $\norm{q_k -\qbar}_{H^s} \to 0$ for every $s<1$, since the convergence is true in $\Ltwopi$ and the sequence $\{q_k \}$ is uniformly bounded in $H^1$. And since $\eta$ can be arbitrarily close to $1$, Assumption \ref{ass.weaknoise} almost implies Assumption \ref{ass.STWN}, but not quite. Note that the uniform boundedness condition on $\norm{q_k}_{H^1}$ is not implicitly stated, but it is implied by the listed assumptions. The parameter $\eta$ will affect the strength of the convergence result in Theorem \ref{thm.formulation3}, namely, larger $\eta$ leads to weaker convergence.  
\begin{rmk}
Another sufficient condition for Theorem \ref{thm.formulation3} is that 
\begin{equation}\label{e.alternative}
\sum_k \norm{q_k -\qbar}^2 < \infty\;,
\end{equation}
with $\qbar \in H^1$. Actually, we could also replace the regularity condition in Assumption \ref{ass.strongnoise} with \eqref{e.alternative}. However we consider the regularity assumption to be a more natural choice.      

\end{rmk}
\par
We define solutions to \eqref{e:formulationSPDE} using the mild formulation
\begin{equation}\label{e.mild}
u_\eps(x,t) = \int_0^t \Seps(t-s)Q_\eps dW(s) = \sum_{k\in\integers} \int_0^t S_\eps(t-s)q_k(x/\eps)e_k(x)dW_k(s)\;,
\end{equation}
where $S_\eps(t)$ is the semigroup generated by $\gen_\eps$. It is easy to check, using techniques introduced in the next section, that for fixed $\eps>0$, the semigroup $S_\eps(t)$ is a $\mathcal{C}_0$-semigroup. In this case, one can check that weak and mild solutions coincide \cite{hairer09,daprato92}, so the mild solution is indeed the correct one to look at. We also have the following regularity result
\begin{prop}\label{prop.solution}
Suppose Assumptions \ref{ass:elliptic}, \ref{ass.strongnoise} or \ref{ass:elliptic}, \ref{ass.STWN} hold true. Then, for fixed $\eps \in (0,1)$, the solution $u_\eps$ to \eqref{e:formulationSPDE} has almost surely continuous sample paths in $\Ltwopi$. 
\end{prop}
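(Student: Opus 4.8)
The plan is to produce a continuous $\Ltwopi$-valued modification of the mild solution \eqref{e.mild} by the factorization method of \cite{daprato92}; since weak and mild solutions coincide, this is all that is required. The first step is to record the regularity of the semigroup for \emph{fixed} $\eps$: because $\gen_\eps$ is a uniformly elliptic second-order operator with $\mathcal{C}_b^2$ coefficients on the circle, it is, after subtraction of a suitable constant $c$, sectorial, so $S_\eps$ is analytic, and the interpolation machinery of Section~\ref{s.preliminary} identifies $D\big((c-\gen_\eps)^\theta\big)$ with $H^{2\theta}$ up to an $\eps$-dependent norm equivalence. Together with duality this yields, for every $\gamma\in[0,1]$, a constant $C_\eps$ with $\norm{S_\eps(\sigma)g}\le C_\eps\,\sigma^{-\gamma/2}\norm{g}_{H^{-\gamma}}$ for $\sigma\in(0,T]$. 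Then I would fix a factorization exponent $\beta\in(0,\tfrac12)$, set $Y_\eps(\sigma)=\int_0^\sigma(\sigma-r)^{-\beta}S_\eps(\sigma-r)Q_\eps\,dW(r)$, use the stochastic Fubini theorem to write $u_\eps(t)=\tfrac{\sin\pi\beta}{\pi}\int_0^t(t-s)^{\beta-1}S_\eps(t-s)Y_\eps(s)\,ds$, and invoke the standard fact that $(R_\beta g)(t)=\int_0^t(t-s)^{\beta-1}S_\eps(t-s)g(s)\,ds$ maps $L^p([0,T];\Ltwopi)$ continuously into $C([0,T];\Ltwopi)$ whenever $p>1/\beta$. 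Since $Y_\eps$ is Gaussian, $\E\norm{Y_\eps(\sigma)}^{2m}\lesssim_m(\E\norm{Y_\eps(\sigma)}^2)^m$, and $\E\norm{Y_\eps(\sigma)}^2=\int_0^\sigma(\sigma-r)^{-2\beta}\norm{S_\eps(\sigma-r)Q_\eps}_\HS^2\,dr$, so everything reduces to the single estimate
\begin{equation}\label{e:planHS}
\int_0^T\sigma^{-2\beta}\norm{S_\eps(\sigma)Q_\eps}_\HS^2\,d\sigma<\infty\;.
\end{equation}

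The second step is to establish \eqref{e:planHS}, which is where the structure of the noise is used. Writing $\eps^{-1}=M\in\naturals$ and $q_k=\sum_j\hat q_k(j)e_j$, one has $q_k^\eps=\sum_j\hat q_k(j)e_{jM}$, hence $\innerprod{q_k^\eps e_k,e_m}=\hat q_k(j)$ if $m=k+jM$ and $0$ otherwise, so that
\[
\norm{S_\eps(\sigma)Q_\eps}_\HS^2=\sum_k\norm{S_\eps(\sigma)(q_k^\eps e_k)}^2\le C_\eps\,\sigma^{-\gamma}\sum_k\norm{q_k^\eps e_k}_{H^{-\gamma}}^2,\qquad \norm{q_k^\eps e_k}_{H^{-\gamma}}^2=\sum_j\frac{|\hat q_k(j)|^2}{(1+(k+jM)^2)^\gamma}.
\]
Splitting the $j$-sum at $|jM|=|k|/2$: on the inner range one bounds $(1+(k+jM)^2)^{-\gamma}\lesssim(1+k^2)^{-\gamma}$ and uses $\sum_j|\hat q_k(j)|^2=\norm{q_k}^2$, while on the outer range one bounds that factor by $1$, writes $|\hat q_k(j)|^2\le(1+j^2)^{-1}(1+j^2)|\hat q_k(j)|^2$, and uses $|j|\ge|k|/(2M)$ together with $\sum_j(1+j^2)|\hat q_k(j)|^2=\norm{q_k}_{H^1}^2$; this gives $\norm{q_k^\eps e_k}_{H^{-\gamma}}^2\lesssim_\eps(1+k^2)^{-\gamma}\norm{q_k}^2+|k|^{-2}\norm{q_k}_{H^1}^2$ for $|k|$ large. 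Under the standing assumptions one has $\norm{q_k}_{H^1}\lesssim\norm{q_k}\lesssim1\wedge|k|^{-\nu}$ for some $\nu\ge0$ (namely $\nu=\alpha$ in the case $\alpha\le\tfrac12$ of Assumption~\ref{ass.strongnoise}, and $\nu=0$ in the setting of Assumption~\ref{ass.STWN}, where the $q_k$ are uniformly bounded in $H^1$; the case $\alpha>\tfrac12$ of Assumption~\ref{ass.strongnoise} is even easier since then $Q_\eps$ itself is Hilbert--Schmidt and $\norm{S_\eps(\sigma)Q_\eps}_\HS$ is bounded on $[0,T]$). Choosing $\gamma\in(\tfrac12-\nu,1)$ makes $\sum_k(1+k^2)^{-\gamma}|k|^{-2\nu}<\infty$, so that $\norm{S_\eps(\sigma)Q_\eps}_\HS^2\lesssim_\eps\sigma^{-\gamma}$, and then \eqref{e:planHS} follows as soon as $2\beta+\gamma<1$, which is compatible with $\beta\in(0,\tfrac12)$ because $\gamma$ may be taken arbitrarily close to $\tfrac12-\nu\le\tfrac12$.

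The one genuinely delicate point is the integrated Hilbert--Schmidt bound \eqref{e:planHS}: precisely in the regime relevant for Theorem~\ref{thm.formulation3} (Assumption~\ref{ass.STWN}), and in the borderline cases of Assumption~\ref{ass.strongnoise}, the operator $Q_\eps$ is \emph{not} Hilbert--Schmidt, so one cannot simply estimate $\norm{S_\eps(\sigma)Q_\eps}_\HS\le\norm{S_\eps(\sigma)}\,\norm{Q_\eps}_\HS$; instead the parabolic smoothing of $S_\eps$ has to be played off against the fact that $q_k^\eps e_k$ concentrates essentially all of its spectral mass at wavenumbers within $O(1/\eps)$ of $k$. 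Turning this heuristic into quantitative decay in $k$ — which is exactly where $\eps^{-1}\in\naturals$ and the $H^1$-regularity of the $q_k$ enter — is the step I expect to require the most care. Everything else (analyticity of $S_\eps$ and the Sobolev identification of its interpolation spaces from Section~\ref{s.preliminary}, the stochastic Fubini identity, the mapping property of fractional integration, and the Gaussian moment comparison) is standard, so once \eqref{e:planHS} is in hand the argument closes at once.
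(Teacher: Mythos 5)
Your proposal is correct, and the reduction to the time-integrated Hilbert--Schmidt bound is exactly the paper's starting point (the paper simply cites the standard Da Prato--Zabczyk criterion rather than unpacking the factorization method). Where you genuinely diverge is in the proof of that bound. The paper obtains it (Lemmas~\ref{lem:apriori} and~\ref{lem.apriori2}) by writing $\norm{S_\eps(t)q_k^\eps e_k}^2=(1+k^2)^{-\nu}\norm{S_\eps(t)q_k^\eps(1-\del_x^2)^{\nu/2}e_k}^2$ and then invoking two interpolation inputs: the comparison of the interpolation spaces of $\gen_\eps$ with Sobolev spaces (Lemma~\ref{lem:interpolation}) and the Cald\'eron--Lions bound $\norm{(1-\del_x^2)^{-\nu/2}f^\eps(1-\del_x^2)^{\nu/2}}\lesssim\eps^{-\nu}\norm{f}_{H^1}$ on the oscillatory multiplier (Lemma~\ref{lem:qksobolev}). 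You instead compute $\norm{q_k^\eps e_k}_{H^{-\gamma}}$ directly from the fact that $q_k^\eps e_k$ is spectrally supported on $k+\eps^{-1}\integers$, splitting the Fourier sum at $|j/\eps|=|k|/2$; this bypasses Lemma~\ref{lem:qksobolev} entirely and makes the role of $\eps^{-1}\in\naturals$ completely explicit. Your route is more elementary and self-contained for fixed $\eps$; the paper's route has the advantage that its $\eps$-dependence ($\eps^{-2\nu}$ per application) is quantified in a form that is reused later for the actual convergence proofs, which is why the paper sets it up that way.

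One small caveat: in the setting of Assumption~\ref{ass.STWN} you take $\nu=0$ on the grounds that the $q_k$ are uniformly bounded in $H^1$. Strictly speaking, for $\eta>0$ that assumption only yields $\norm{q_k-\qbar}_{H^1}^2=o(|k|^{\eta})$, not a uniform bound (the paper makes the same unproved assertion in the discussion following Assumption~\ref{ass.STWN}, and its own Lemma~\ref{lem.apriori2} avoids the issue by splitting $q_k=(q_k-\qbar)+\qbar$). Your argument survives unchanged: inserting $\norm{q_k}_{H^1}^2\lesssim 1+|k|^{\eta}$ with $\eta<1$ into your two sums, they still converge once $\gamma>(1+\eta)/2$, and since $\gamma<1$ one can still take $2\beta<1-\gamma$. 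It would be worth making that patch explicit rather than relying on the uniform bound.
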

\begin{proof}
Using standard results for linear SPDEs \cite{hairer09,daprato92} we need only check that 
\[
\norm{S_\eps(t)Q_\eps}_\HS < \infty  \;,
\]
for every $t \in (0,T]$ and that there exists $\beta \in (0,1/2)$ such that
\[
\int_0^T t^{-2\beta} \norm{S_\eps(t)Q_\eps}_\HS^2 dt < \infty\;.
\]
In Lemma \ref{lem:apriori} below, we show that Assumption \ref{ass.strongnoise} implies that 
\[
\norm{S_\eps(t)Q_\eps}_\HS \lesssim \eps^{-4\gamma}|t|^{-\gamma} \left( \sum_{k\in\integers} (1\wedge |k|^{-4\gamma}) \norm{q_k}_{H^1}^2 \right)^{1/2} \;,  
\]
for any $\gamma \in (0,1/2)$. In Lemma \ref{lem.apriori2}, we show that Assumption \ref{ass.STWN} implies a similar estimate. The result follows immediately.
\end{proof}
\begin{rmk}
Note that although the decay assumption on $\norm{q_k}$ was not needed to show regularity of the solutions, it is necessary when proving convergence as $\eps \to 0$. It furthermore allows us to fine tune our results so that we can find the optimal space in which convergence occurs. 
\end{rmk}
%%%%%%%%
%%%%%%%%
\section{Preliminary Results}\label{s.preliminary}
In this section we shall develop a few tools necessary for the proof of the main results. In Section~\ref{ss.semigroup}, we start with some standard results concerning the semigroups generated by one dimensional It\^o diffusions. In Section~\ref{ss.interpolation}, we develop a relationship between the interpolation spaces of $\gen_\eps$ and the Sobolev spaces. Finally, in Section~\ref{ss.estimating}, we go on to approximate the effect of the adjoint semigroup $\Seps^*(t)$ on trigonometric polynomials. 
\subsection{Properties of the diffusion}\label{ss.semigroup}
We recall some basic results concerning the semigroup $\Seps(t)$ generated by $\gen_\eps$. Firstly, we have the following smoothing properties.
%%%%%
\begin{lemma}\label{l.semigroup}
For any $t \in [0,T]$ we have that
\begin{equation}\label{e:boundedsemigroup}
\norm{\Seps(t)} \leq C_T\;.
\end{equation}
Moreover, for any $\gamma \in [0,1)$ we have that
\begin{equation}\label{e:smoothingsemigroup}
\norm{(1-\gen_\eps)^{\gamma}\Seps(t)} \lesssim t^{-\gamma} \;.
\end{equation}
Finally, the same results hold true with $S_\eps(t)$ and $\gen_\eps$ replaced with their adjoints $S^*_\eps(t)$ and $\gen_\eps^*$.
\end{lemma}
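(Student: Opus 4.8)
The plan is to exploit the one-dimensional structure: the diffusion $X_\eps$ with generator $\gen_\eps$ differs from the rescaled process $X(t/\eps^2)$ only by a deterministic rescaling, so all semigroup bounds can be read off from the unscaled generator $\gen$, which is a genuine uniformly elliptic diffusion on the circle, or—more robustly—directly from the probabilistic representation $\Seps(t)f(x) = \E[f(X_\eps(t))\mid X_\eps(0)=x]$. For \eqref{e:boundedsemigroup}, I would first observe that $\Seps(t)$ is a contraction on $\Linfty$ by the maximum principle (it is a Markov semigroup), and then pass to $\Ltwopi$ by interpolation with the $L^1$ bound obtained from the adjoint acting on the invariant density; alternatively, since $\rho^\eps$ is bounded above and below uniformly in $\eps$ (Assumption \ref{ass:elliptic} and the remark that all powers of $\rho$ lie in $\mathcal{C}_b^2$), the semigroup is a bounded self-adjoint-up-to-a-weight operator on $L^2([0,2\pi],\rho^\eps)$, and this weighted $L^2$ norm is equivalent to $\norm{\cdot}$ with constants independent of $\eps$. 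Either route gives \eqref{e:boundedsemigroup} with $C_T$ (in fact one may take $C_T$ independent of $T$).

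For the smoothing estimate \eqref{e:smoothingsemigroup}, I would use spectral calculus: writing $\gen_\eps$ (after the weight change making it self-adjoint) in terms of its spectral decomposition, the operator $(1-\gen_\eps)^\gamma \Seps(t)$ has spectral multiplier $(1+\lambda)^\gamma e^{-\lambda t}$ on the $\lambda$-eigenspace, and $\sup_{\lambda \ge 0}(1+\lambda)^\gamma e^{-\lambda t} \lesssim t^{-\gamma}$ for $t \in (0,T]$ and $\gamma \in [0,1)$. The $\eps$-uniformity is again inherited from the uniform equivalence of the weighted and unweighted $L^2$ norms, so no $\eps$-dependent constant appears. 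One subtlety is that $\gen_\eps$ is only self-adjoint after conjugating by $\sqrt{\rho^\eps}$ (or in the weighted space), so I would either phrase the whole argument in the weighted space and transfer back at the end, or note that $\gen_\eps$ is a bounded perturbation of a self-adjoint operator and use the standard analytic-semigroup bound $\norm{(1-\gen_\eps)^\gamma \Seps(t)} \lesssim t^{-\gamma}$ valid for generators of bounded analytic semigroups, having first checked sectoriality—which for a second-order elliptic operator with smooth coefficients is classical.

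Finally, for the adjoint statements, I would note that $\gen_\eps^*$ is again a second-order operator of the same type (its coefficients are built from $b^\eps$, $\sigma^\eps$ and their derivatives, all uniformly in $\mathcal{C}_b^2$), and that $\rho^\eps$ is its invariant density playing the role that the constant function plays for $\gen_\eps$; so the identical arguments apply verbatim, or one simply takes adjoints of the operator bounds already established, using $\norm{A^*} = \norm{A}$ on the Hilbert space $\Ltwopi$ and the fact that $(1-\gen_\eps)^\gamma$ and $\Seps(t)$ have adjoints $(1-\gen_\eps^*)^\gamma$ and $\Seps^*(t)$. The main obstacle is purely bookkeeping: ensuring at every step that the constants do not secretly depend on $\eps$, which comes down to the single fact that the change-of-measure weight $\rho^\eps$ and its inverse are bounded above and below uniformly in $\eps$—a consequence of periodicity together with Assumption \ref{ass:elliptic}. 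Everything else is standard parabolic theory.
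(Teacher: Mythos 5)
Your proposal is correct and follows essentially the same route as the paper: self-adjointness of $\gen_\eps$ on the $\rho_\eps$-weighted $L^2$ space, the spectral theorem giving the multiplier bound $(1+\lambda)^\gamma e^{-\lambda t}\lesssim t^{-\gamma}$, uniform-in-$\eps$ equivalence of the weighted and unweighted norms from the upper and lower bounds on $\rho$, and duality ($\norm{A^*}=\norm{A}$) for the adjoint statements. The alternative routes you sketch (maximum principle plus interpolation, or sectoriality of a bounded perturbation) are fine but unnecessary; the weighted-space argument is exactly what the paper does.
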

\begin{proof}
We shall only prove \eqref{e:smoothingsemigroup} since \eqref{e:boundedsemigroup} follows as a special case. If $\gen_\eps$ were self-adjoint, then the result would follow easily from the spectral theorem \cite{hairer09}. $\gen_\eps$ is self-adjoint if the domain of the operator is taken to be the weighted space $L^2(\rho_\eps)$ with norm $\norm{f}_{\rho_\eps} = \norm{ f \rho_\eps^{1/2} }$ and corresponding inner product, where $\rho_\eps$ is the invariant density for $\gen_\eps$. The spectral theorem therefore implies that
\[
\norm{(1-\gen_\eps)^{\gamma}\Seps(t)f}_{\rho_\eps}\lesssim  t^{-\gamma}\norm{f}_{\rho_\eps}\;.
\]
Furthermore, one can easily show that $\rho_\eps = \rho(x/\eps)$ where $\rho$ is the invariant density of $\gen$, which we assumed in \eqref{e:densitybound} to be bounded above and away from zero. We therefore have that 
\begin{align*}
\norm{(1-\gen_\eps)^{\gamma}\Seps(t)f} &\leq\norm{\rho^{-1/2}}_{\infty} \norm{(1-\gen_\eps)^{\gamma}\Seps(t)f}_{\rho_\eps}\\ 
&\lesssim 
 t^{-\gamma} \norm{\rho^{-1/2}}_{\infty} \norm{f}_{\rho_\eps} \leq  t^{-\gamma} \norm{\rho^{-1/2}}_{\infty} \norm{\rho^{1/2}}_{\infty} \norm{f} \lesssim  t^{-\gamma} \norm{f}\;,
\end{align*}
which proves the results for $S_\eps(t)$. The results for $S_\eps^*(t)$ follow from the dual representation 
$\norm{S^*_\eps(t) f } = \sup_{\norm{g}=1} |\innerprod{f,S_\eps(t)g}|$.
\end{proof}
We now recall some standard estimates on the adjoint of the semigroup $S(t)$ generated by $\gen$. 
\begin{lemma}\label{lem:bounded}\label{l.semigroupstar}
Let $S^*(t)$ denote the adjoint of $S(t)$. For any $t\in (0,T]$, we have that 
\begin{align}
\norm{S^*(t) } & \leq  C_T\label{e:bounded2}\;, \\
\norm{\del_x S^*(t)} &\lesssim |t|^{-1/2} \label{e:bounded3}\;.
\end{align}
Moreover, there exists $\omega > 0$ such that
\begin{equation}
\norm{S^*(t)\left(1-\rho(x)\right)}\lesssim \exp(-\omega t) \;.
\end{equation}
\end{lemma}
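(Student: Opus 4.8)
The plan is to prove the three claims of Lemma~\ref{l.semigroupstar} by combining the self-adjointness trick from the proof of Lemma~\ref{l.semigroup} with the spectral gap coming from ergodicity of the diffusion generated by $\gen$. First I would note that $S(t)$ is the $\eps=1$ case of $S_\eps(t)$, so that \eqref{e:bounded2} is literally \eqref{e:boundedsemigroup} and requires no new argument. For \eqref{e:bounded3}, I would proceed exactly as in Lemma~\ref{l.semigroup}: $\gen$ is self-adjoint on $L^2(\rho)$, so the spectral theorem gives $\norm{(1-\gen)^{1/2}S(t)f}_{\rho}\lesssim t^{-1/2}\norm{f}_{\rho}$, and since $\rho$ is bounded above and away from zero (Assumption~\ref{ass:elliptic}), the weighted and unweighted norms are equivalent, yielding $\norm{(1-\gen)^{1/2}S(t)}\lesssim t^{-1/2}$ on $\Ltwopi$. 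It then remains to bound $\norm{\del_x S^*(t)}$ by $\norm{(1-\gen)^{1/2}S(t)}$ up to a constant, which follows because $\del_x$ is relatively bounded with respect to $(1-\gen)^{1/2}$: writing $\gen = \tfrac12\sigma^2\del_x^2 + b\del_x$ with $\sigma$ uniformly elliptic and $b,\sigma\in\mathcal{C}_b^2$, one has $\norm{\del_x g}^2 \lesssim \norm{g}\,\norm{\del_x^2 g} + \text{l.o.t.} \lesssim \norm{(1-\gen)g}\norm{g}$ by integration by parts, hence $\norm{\del_x(1-\gen)^{-1/2}}\lesssim 1$; passing to adjoints (using $\norm{\del_x S^*(t)} = \sup_{\norm g = 1}|\innerprod{\del_x S^*(t)f, g}| = \sup|\innerprod{f, S(t)\del_x^* g}|$ and that $\del_x^* = -\del_x$ on the periodic domain) gives \eqref{e:bounded3}.

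For the third claim, the exponential decay of $S^*(t)(1-\rho)$, I would use the spectral gap. The key point is that $1-\rho$ has mean zero: $\innerprod{1-\rho,1} = 1 - \innerprod{\rho,1} = 0$. On $L^2(\rho^{-1})$ — the natural space on which $\gen^*$ is self-adjoint, dual to $L^2(\rho)$ for $\gen$ — the operator $\gen^*$ has a one-dimensional kernel spanned by $\rho$ and, by uniform ellipticity plus compactness of the resolvent on the torus, a spectral gap: the rest of the spectrum lies in $(-\infty,-\omega]$ for some $\omega>0$. Since $1-\rho$ is orthogonal (in $L^2(\rho^{-1})$) to the kernel — indeed $\innerprod{1-\rho,\rho}_{\rho^{-1}} = \innerprod{(1-\rho),1} = 0$ — the spectral theorem gives $\norm{S^*(t)(1-\rho)}_{\rho^{-1}}\lesssim e^{-\omega t}\norm{1-\rho}_{\rho^{-1}}$, and the equivalence of $L^2(\rho^{-1})$ with $\Ltwopi$ (again from the two-sided bound on $\rho$) transfers this to the $\Ltwopi$ norm.

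The main obstacle is establishing the spectral gap $\omega>0$ cleanly: one must argue that zero is an isolated simple eigenvalue of $\gen^*$ (equivalently of $\gen$) on the periodic interval. This is standard — it follows from the Krein--Rutman / Perron--Frobenius theorem for the positivity-improving semigroup $S(t)$ on the compact state space $[0,2\pi]$ (with periodic boundary conditions), or alternatively from a Poincaré inequality for the Dirichlet form associated with $\gen$ on $L^2(\rho)$, which holds because $\sigma$ is bounded below and $\rho$ is bounded above and below. I would cite \cite{pavliotis08,bensoussan78} for this ergodicity fact rather than reprove it. A minor technical point to be careful about is that $\gen$ is self-adjoint on $L^2(\rho)$ while $\gen^*$ is self-adjoint on the dual space $L^2(\rho^{-1})$; keeping track of which weight goes with which operator is where sign/norm errors could creep in, but the two-sided bounds on $\rho$ make all the relevant norms uniformly equivalent, so no $\eps$-dependence or loss of constants occurs.
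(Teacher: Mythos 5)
Your treatment of \eqref{e:bounded2} and of the exponential decay of $S^*(t)(1-\rho)$ matches the paper's: the former is the $\eps=1$ case of Lemma~\ref{l.semigroup}, and the latter is exactly the ``standard spectral machinery'' the paper alludes to (self-adjointness of $\gen^*$ on $L^2(\rho^{-1})$, a simple isolated zero eigenvalue with eigenfunction $\rho$, and the orthogonality $\innerprod{(1-\rho)\rho,\rho^{-1}}=\innerprod{1-\rho,1}=0$). For \eqref{e:bounded3} you take a genuinely different route: the paper conjugates $\gen$ to $V(x)\del_x+\del_x^2$ by a change of variables bounded on every $H^s$ and then invokes a perturbation result for analytic semigroups to identify the interpolation spaces of $(1-\gen)$ with the Sobolev scale, whereas you prove directly the single inequality actually needed, namely the relative bound $\norm{\del_x(1-\gen)^{-1/2}}\lesssim1$, via the Dirichlet form. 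Your argument is more elementary and self-contained; the paper's buys the full two-sided identification of interpolation spaces, which it reuses elsewhere (e.g.\ the bound on $\norm{\del_x(\gen^*)^{-1}}$ in Lemma~\ref{l.boundarylayer}).

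Two repairs are needed in your argument for \eqref{e:bounded3}. First, the inference ``$\norm{\del_x g}^2\lesssim\norm{(1-\gen)g}\,\norm{g}$, hence $\norm{\del_x(1-\gen)^{-1/2}}\lesssim1$'' is not valid as stated: for a positive self-adjoint $A$ one has $\norm{A^{1/2}h}\,\norm{A^{-1/2}h}\ge\norm{h}^2$ by Cauchy--Schwarz, not $\le$, so substituting $g=(1-\gen)^{-1/2}h$ gives a bound in the wrong direction. The fix is to keep the quadratic form rather than Cauchy--Schwarz it away: by uniform ellipticity and the two-sided bounds on $\rho$, one has $\norm{\del_x g}^2\lesssim-\innerprod{\gen g,g\rho}\le\norm{(1-\gen)^{1/2}g}_{\rho}^2\lesssim\norm{(1-\gen)^{1/2}g}^2$, where $\norm{\cdot}_\rho$ is the weighted norm from Lemma~\ref{l.semigroup}; this yields the half-power bound directly. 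Second, after your duality step the operator you must control is $\norm{(1-\gen)^{-1/2}\del_x}=\norm{\del_x(1-\gen^*)^{-1/2}}$, i.e.\ the relative bound for $\gen^*$ rather than for $\gen$; this holds by the same Dirichlet-form computation carried out on $L^2(\rho^{-1})$, since $\gen^*$ is again uniformly elliptic with $\mathcal{C}_b^2$ coefficients (alternatively, work with $S^*(t)=\rho\,S(t)\,\rho^{-1}$ and the product rule), but it needs to be said. With these adjustments the proof is complete.
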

\begin{proof}
The first result follows from Lemma \ref{l.semigroup} with $\eps=1$. The second result follows if we can show that the interpolation spaces of $(1-\gen)$ are the same as the Sobolev spaces interpolated by $(1-\del_x^2)$. Firstly, one can find a change of variables $Q$ such that 
\[
Q \gen Q^{-1} = V(x)\del_x + \del_x^2  
\]
where $Q$ and its inverse are bounded from $H^s$ into itself for any $s$ and $V$ is bounded. This change of variables can be found in Lemma \ref{lem:interpolation}. Hence, the interpolation spaces of $(1-\gen)$ are the same as the interpolation spaces of $(-V(x)\del_x + 1 - \del_x^2)$. Furthermore, we have the following fact: if $L_0$ generates an analytic semigroup on $\mathcal{B}$ and has interpolation spaces $\mathcal{B}^0_{\gamma}$, then $B+L_0$ has the same interpolation spaces, whenever $B$ is a bounded operator from $\mathcal{B}^0_\gamma$ into $\mathcal{B}$, for some $\gamma \in [0,1)$ by \cite{hairer09}. It follows that $B + L_0 = (1-Q \gen Q^{-1})$ has the same interpolation spaces as $L_0 = (1-\del_x^2)$, which proves the claim. The third result follows using standard machinery from spectral theory, similar to those used in Lemma \ref{l.semigroup}. 
\end{proof}
Since it will not affect any of our future estimates, we will assume from this point on that $\omega=1$. 
Notice that the semigroup $\Seps(t)$ satisfies the following rescaling identity
\begin{equation}\label{e:semigroupscaling}
\Seps(t) f(x/\eps) = (S(t/\eps^2)f)(x/\eps) \;.
\end{equation}
One can therefore think of the semigroup as zooming in on the highly oscillatory parts, evolving them (according to the diffusion generated by $\gen$) to very large times, and then zooming back out. In particular, combining this identity with Lemma \ref{lem:bounded} gives
\begin{equation}\label{e:elliptic}
\left\| \Seps^*(t)\left(1-\rho(x/\eps)\right) \right\| \lesssim \exp(-\omega t/\eps^2)\;,
\end{equation}
which will prove useful in the sequel.

\subsection{Interpolation Results}\label{ss.interpolation}

In order to prove convergence results in particular Sobolev spaces, we need to know the smoothing properties of the semigroup $\Seps(t)$. Estimates from analytic semigroup theory tell us which interpolation spaces of $\gen_\eps$ the solutions will live in. We would therefore like to obtain some embedding result between these interpolation spaces and the usual Sobolev spaces. It would be futile to look for an embedding result uniformly in $\eps$, the best we can do is the following lemma, which, for a price, grants us the ability to switch back and forth between interpolation spaces and Sobolev spaces.

\begin{lemma}
One has the following two inequalities
\begin{align}
\norm{(1-\del_x^2)^{\gamma} f } &\lesssim  \eps^{-2\gamma} \norm{(1-\gen_\eps)^{\gamma}f}\label{e.interpolationlemma1}\\
\norm{(1-\gen_\eps)^{-\gamma} f } &\lesssim  \eps^{-2\gamma} \norm{(1-\del_x^2)^{-\gamma}f}\label{e.interpolationlemma2}
\end{align}
for any $\gamma \in [0,1]$ and any $f$ for which the two norms are finite. 
\label{lem:interpolation}
\end{lemma}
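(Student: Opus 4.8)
The strategy is to reduce both inequalities to a comparison of the operators $(1-\gen_\eps)$ and $(1-\eps^2\del_x^2)$ — or rather of their fractional powers — using interpolation. The key scaling observation is that, by \eqref{e:semigroupscaling}, the operator $\gen_\eps$ acting on functions of $x$ is conjugate (via the rescaling $x\mapsto x/\eps$, which is an isometry up to the periodicity bookkeeping forced by $\eps^{-1}\in\naturals$) to $\eps^{-2}$ times the unscaled operator $\gen$ acting on $2\pi/\eps$-periodic functions. Concretely, writing $R_\eps f(x) = f(x/\eps)$ one has $\gen_\eps R_\eps = \eps^{-2} R_\eps \gen$, so $(1-\gen_\eps)^\gamma R_\eps = R_\eps(1-\eps^2\gen)^\gamma\eps^{-2\gamma}$ modulo the constant term. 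Thus it suffices to compare $(1-\del_x^2)^\gamma$ with $\eps^{-2\gamma}(1-\gen_\eps)^\gamma$, and after unscaling this becomes a comparison, uniform in the (large) period, of $(1-\eps^2\del_x^2)^\gamma$ with $(1-\eps^2\gen)^\gamma$ — but since $\eps^2\gen = \eps^2 b\del_x + \tfrac12\eps^2\sigma^2\del_x^2$, the relevant statement is really that the interpolation spaces of $(1-\gen)$ coincide with the Sobolev spaces, with constants that survive the rescaling.

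Here is the order I would carry this out. \textbf{Step 1:} Reduce to $\gamma=1$ and $\gamma=0$ by operator interpolation. For $\gamma=1$, \eqref{e.interpolationlemma1} reads $\norm{(1-\del_x^2)f}\lesssim \eps^{-2}\norm{(1-\gen_\eps)f}$, which I would prove directly: expand $(1-\gen_\eps)f = f - \eps^{-1}b^\eps\del_x f - \tfrac12(\sigma^\eps)^2\del_x^2 f$, solve for $\del_x^2 f$, and estimate $\norm{\del_x^2 f}$ in terms of $\norm{(1-\gen_\eps)f}$, $\norm{\del_x f}$ and $\norm{f}$; the factor $\eps^{-1}$ on the drift and $\eps^{-2}$ overall appear naturally, using that $\sigma$ is bounded away from zero (Assumption \ref{ass:elliptic}) so that $(\sigma^\eps)^{-2}$ is bounded. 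The lower-order term $\norm{\del_x f}$ is controlled by interpolation between $\norm{f}$ and $\norm{\del_x^2 f}$ and absorbed. For $\gamma=0$ both inequalities are trivial. \textbf{Step 2:} Invoke the complex (or real) interpolation functor: since $(1-\gen_\eps)$ generates an analytic semigroup (Lemma \ref{l.semigroup}) and similarly $(1-\del_x^2)$, the family of fractional-power domains interpolates, and the bound at the endpoints $\gamma\in\{0,1\}$ upgrades to all $\gamma\in[0,1]$ with the stated $\eps^{-2\gamma}$ dependence — one has to track that the interpolation constant does not itself blow up in $\eps$, which is why the clean form of the $\gamma=1$ estimate (a genuine operator inequality with explicit $\eps$-power) matters. \textbf{Step 3:} Obtain \eqref{e.interpolationlemma2} by duality from \eqref{e.interpolationlemma1}: since $\gen_\eps^*$ satisfies the same estimates (Lemma \ref{l.semigroup}, last sentence) and $(1-\del_x^2)$ is self-adjoint, $\norm{(1-\gen_\eps)^{-\gamma}f} = \sup_{\norm{g}=1}|\innerprod{f,(1-\gen_\eps^*)^{-\gamma}g}| = \sup_{\norm{g}=1}|\innerprod{(1-\del_x^2)^{-\gamma}f,(1-\del_x^2)^\gamma(1-\gen_\eps^*)^{-\gamma}g}|$, and applying \eqref{e.interpolationlemma1} (for the adjoint) to bound $\norm{(1-\del_x^2)^\gamma(1-\gen_\eps^*)^{-\gamma}g}\lesssim \eps^{-2\gamma}\norm{g}$ gives the claim.

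The main obstacle I anticipate is \textbf{Step 2}: making sure the interpolation argument produces a constant independent of $\eps$. The danger is that although $(1-\gen_\eps)$ has uniformly bounded analytic-semigroup constants (by Lemma \ref{l.semigroup}, whose proof uses only the $\eps$-independent bounds on $\rho$), the equivalence of norms $\norm{(1-\gen_\eps)^\gamma\cdot}$ versus $\eps^{-2\gamma}\norm{(1-\del_x^2)^\gamma\cdot}$ could in principle carry a hidden $\eps$-dependent multiplicative factor coming from the interpolation inequality itself. The way around this is to phrase the $\gamma=1$ step as the two-sided bound and then note that the standard interpolation inequality $\norm{A^\gamma x}\lesssim\norm{Ax}^\gamma\norm{x}^{1-\gamma}$ holds with a constant depending only on the sector of analyticity, which is uniform in $\eps$; combining this for $A=(1-\gen_\eps)$ with the endpoint comparison then yields the $\eps^{-2\gamma}$ scaling cleanly. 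The change-of-variables operator $Q$ alluded to after the lemma (and used in Lemma \ref{l.semigroupstar}) is the natural device for handling the first-order drift term uniformly, and I would use it here as well to reduce $\gen$ to the form $V(x)\del_x + \del_x^2$ before interpolating.
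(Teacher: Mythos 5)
Your architecture (prove the $\gamma=1$ endpoint, interpolate to $\gamma\in[0,1]$, get the second inequality by passing to the adjoint) is exactly the paper's, and your Step 3 duality argument is verbatim what the paper does. Where you genuinely diverge is the endpoint estimate $\norm{(1-\del_x^2)f}\lesssim\eps^{-2}\norm{(1-\gen_\eps)f}$: the paper conjugates $\gen_\eps$ by a change of variables and a weighting by the invariant density into a self-adjoint Schr\"odinger operator $\Aeps=\eps^{-2}W(\psi(\cdot/\eps))+\del_z^2$, then reads off $\norm{(1-\Aeps)^{-1}}\le 1$ from the spectral theorem and $\norm{\del_z^2(1-\Aeps)^{-1}}\lesssim\eps^{-2}$ from the explicit $\eps^{-2}$ in the potential. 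Your route --- solve the identity $(1-\gen_\eps)f=f-\eps^{-1}b^\eps\del_x f-\tfrac12(\sigma^\eps)^2\del_x^2 f$ for $\del_x^2 f$ using ellipticity of $\sigma$, and absorb $\eps^{-1}\norm{\del_x f}\le\tfrac12\norm{\del_x^2 f}+C\eps^{-2}\norm f$ --- is more elementary and avoids the whole conjugation machinery; what the paper's version buys is that the same self-adjoint reduction is reused elsewhere (Lemma \ref{l.semigroup}, Lemma \ref{l.apriori}), and that the adjoint case falls out by adding a potential to the Schr\"odinger operator.

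Two points need tightening. First, after absorption your endpoint bound reads $\norm{(1-\del_x^2)f}\lesssim\eps^{-2}\norm f+\norm{(1-\gen_\eps)f}$, so you still need $\norm f\lesssim\norm{(1-\gen_\eps)f}$ uniformly in $\eps$; this is true but is an input you must supply --- it follows from $(1-\gen_\eps)^{-1}=\int_0^\infty e^{-t}S_\eps(t)\,dt$ together with the uniform (in $\eps$) boundedness of $S_\eps(t)$ established via the weighted space $L^2(\rho_\eps)$ in Lemma \ref{l.semigroup}. Second, in Step 2 your fallback device, the moment inequality $\norm{A^\gamma x}\lesssim\norm{Ax}^\gamma\norm x^{1-\gamma}$, does \emph{not} close the argument: applied with $A=(1-\del_x^2)$ it yields $\norm{(1-\del_x^2)^\gamma f}\lesssim\eps^{-2\gamma}\norm{(1-\gen_\eps)f}^\gamma\norm f^{1-\gamma}$, and there is no reverse moment inequality turning the right-hand side into $\eps^{-2\gamma}\norm{(1-\gen_\eps)^\gamma f}$. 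What actually works --- and is your primary device as well as the paper's --- is interpolating the \emph{identity operator} between the two scales via Cald\'eron--Lions, which gives $\norm{I}_{L(X^{(\gamma)},Y^{(\gamma)})}\le\norm{I}_{L(X^{(0)},Y^{(0)})}^{1-\gamma}\norm{I}_{L(X^{(1)},Y^{(1)})}^{\gamma}$ with no extraneous constant, so the $\eps$-uniformity you worry about is automatic. With those two repairs your proof is complete and arguably shorter than the paper's.
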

\begin{proof}
We start by proving the first inequality, the second will follow with a simple argument. To prove the first claim we apply the Cald\'eron-Lions interpolation theorem~\cite{reedsimon_fourier} to obtain a relationship between the interpolation spaces given by
\begin{align*}
\norm{\cdot}_X^{(0)} &= \norm{\cdot}\;, \qquad \norm{\cdot}_{X}^{(1)} = \norm{(1-\gen_\eps)\cdot}\;, \\
\norm{\cdot}_Y^{(0)} &= \norm{\cdot}\;, \qquad \norm{\cdot}_{Y}^{(1)} = \norm{(1-\del_x^2)\cdot}\;.
\end{align*}
It guarantees that, for the identity operator $I$, one has 
\begin{equation}
\norm{I}_{L(X^{(\gamma)},Y^{(\gamma)})} \leq \norm{I}_{L(X^{(0)},Y^{(0)})}^{1-\gamma}\norm{I}_{L(X^{(1)},Y^{(1)})}^{\gamma} \;,
\label{e:calderon1}
\end{equation}
where $X^{(\gamma)}$ and $Y^{(\gamma)}$ are the interpolation spaces given by completing $\Ltwopi$ with respect to the norms $\norm{(1-\gen_\eps)^{\gamma}\cdot}$ and $\norm{(1-\del_x^2)^{\gamma}\cdot}$ respectively. 
\\
It is clear that 
\[
\norm{I}_{L(X^{(0)},Y^{(0)})}=1\;,
\]
since this is just the norm of the identity operator in $\Ltwopi$. The first claim thus follows if we can show that
\[
\norm{I}_{L(X^{(1)},Y^{(1)})} \lesssim \eps^{-2}\;,
\] 
which is equivalent to proving that
\begin{equation}
\norm{(1-\del_x^2)(1-\gen_\eps)^{-1}f} \lesssim \eps^{-2} \norm{f}\;.
\label{e:calderon2}
\end{equation}
We will achieve this by simplifying the operator $\gen_\eps$ through two transformations. Firstly, for the generator $\gen$, one can easily find a change of variables $z=\phi(x)$ with inverse $x=\psi(z)$ such that 
\begin{equation}\label{e.driftgenerator}
\gen f (x) = \left( B(\psi(z))\del_z + \del_z^2 \right)(f\circ \psi) (z) \;,
\end{equation}
where $B = \sqrt{2}\frac{b}{\sigma} - \frac{1}{\sqrt{2}}\sigma'$ and $\phi$ solves the ordinary differential equation
\begin{equation}\label{e.phiODE}
\phi'(x) = \frac{1}{\sqrt{2}}\sigma (\phi(x)) \;,
\end{equation}
with boundary condition $\phi(0)=0$. Given this change of variables, it is easy to find the corresponding change of variables for $\gen_\eps$, in fact, if we set $z=\eps \phi(x/\eps)$ we have that
\begin{equation}\label{e.}
 \gen_\eps f(x) = \left( \frac{1}{\eps}B(\psi (z/\eps)) \del_z + \del_z^2 \right)(f\circ\psi_\eps)(z)\;,
\end{equation}
where $\psi_\eps(\cdot) = \eps \psi(\cdot/\eps)$. Secondly, we hope to make the operator self-adjoint. To do this, we weight our space using the invariant measure of the underlying generator. Let $g(y)$ be the invariant density for the generator $\left( \frac{\sqrt{2}B(y)}{\sigma(y)}\del_y + \del_y^2 \right)$. One can show that 
%Then one can check that $g(\psi(z/\eps))$ is the invariant density for the generator on the right hand side of \eqref{e.driftgenerator}. We therefore have that
%\begin{align*}
%g(x/\eps)^{1/2}\gen_\eps (g(\cdot/\eps)^{-1/2} f )(x) &= g(\psi(z/\eps))^{1/2} \left( \frac{1}{\eps}B(\psi (z/\eps)) \del_z + \del_z^2 \right)( g(\cdot/\eps)\circ\psi_\eps)^{-1/2} f\circ\psi_\eps ) (z)\\
%&= g(\psi(z/\eps))^{1/2} \left( \frac{1}{\eps}B(\psi (z/\eps)) \del_z + \del_z^2 \right)( g(\psi(\cdot/\eps))^{-1/2} f\circ\psi_\eps ) (z)\\
%&=\left( \frac{1}{\eps^2}W(\psi(\cdot/\eps)) + \del_z^2  \right)(f\circ \psi_\eps ) (z)
%\end{align*}
%where $W = g^{1/2} \left( \frac{\sqrt{2}B}{\sigma} \del_y + \del_y^2 \right) g ^{-1/2}$. If we let $\Aeps$ denote the bracketed operator in the line above, then we have 
%\begin{align}
%\gen_\eps f(x) &= g(\psi(z/\eps))^{-1/2}\Aeps (g(\psi(\cdot/\eps))^{1/2} f\circ \psi_\eps)(z)\notag \\
%& =  g(x/\eps)^{-1/2} (\Aeps u) (\eps \phi(x/\eps) ) \label{e.operatordecomposed}
%\end{align}
\begin{equation} \label{e.operatordecomposed}
\gen_\eps f(x) = g(x/\eps)^{-1/2} (\Aeps u) (\eps \phi(x/\eps) )\;,
\end{equation}
where $u = g(\psi(\cdot/\eps))^{1/2}f\circ\psi_\eps$. The Schr\"odinger operator $\Aeps$ is defined by 
\[
\Aeps u(z) = \frac{1}{\eps^2}W(\psi(z/\eps))u(z) + \del_z^2 u(z)
\]
where $W = g^{1/2} \left( \frac{\sqrt{2}B}{\sigma} \del_y + \del_y^2 \right) g ^{-1/2}$. 
 We then have that 
\begin{align*}
\norm{(1-\del_x^2)(1-\gen_\eps)^{-1}f(x)}& \leq \eps^{-2}\norm{(g^{-1/2})''}_{\infty}\norm{(1-\Aeps)^{-1} u (\eps \phi(x/\eps))}\\ &+ \eps^{-1}\norm{(g^{-1/2})'}_{\infty}\norm{\del_x(1-\Aeps)^{-1} u (\eps \phi(x/\eps))}\\ &+ \norm{g^{-1/2}}_{\infty}\norm{\del_x^2(1-\Aeps)^{-1} u (\eps \phi(x/\eps))}\;.
\end{align*}
One can easily deduce the boundedness of $g^{-1/2}$ and its derivatives from Assumption~\ref{ass:elliptic}. Moreover, we have that
\begin{align*}
\norm{\del_x (1-\Aeps)^{-1}u(\eps \phi(x/\eps))}^2 &= \norm{\left((1-\Aeps)^{-1}u)'(\eps \phi(x/\eps)\right)\phi'(x/\eps)}^2 \\
&=\frac{1}{2\pi}\int_0^{2\pi} |\left((1-\Aeps)^{-1}u)'(\eps \phi(x/\eps)\right)\phi'(x/\eps)|^2 dx\\
&=\frac{1}{2\pi}\int_0^{\eps \phi(2\pi/\eps)} |\del_z (1-\Aeps)^{-1} u (z) |^2 |\phi'(\psi(z/\eps))|dz\\
&\leq \norm{\phi'}_{\infty} \norm{\del_z (1-\Aeps)^{-1}u}_{\phi}^2\;,
\end{align*}
where $\norm{\cdot}_{\phi}$ denotes the usual $L^2$ norm but over the interval $[0,\eps \phi(2\pi/\eps)]$ as in the integral above. We can similarly show that
\begin{equs}
\norm{\del_x^2(1-\Aeps)^{-1}u(\eps\phi(x/\eps))} &\leq \norm{(\phi')^{3}}^{1/2}_{\infty}\norm{\del_z^2 (1-\Aeps)^{-1}u}_{\phi} \\ &\quad + \eps^{-1}\norm{\frac{(\phi'')^2}{{\phi}}}^{1/2}_{\infty}\norm{\del_z (1-\Aeps)^{-1}u}_{\phi}\;.
\end{equs}
We can deduce the boundedness of the above expressions involving $\phi$ using \eqref{e.phiODE} and Assumption \ref{ass.elliptic}. We therefore have the bound 
\begin{equs}
\norm{(1-\del_x^2)(1-\gen_\eps)^{-1}f} &\lesssim \eps^{-2}\norm{(1-\Aeps)^{-1}u}_{\phi} + \eps^{-1} \norm{\del_z(1-\Aeps)^{-1}u}_{\phi} \\
&\quad + \norm{\del_z^2 (1-\Aeps)^{-1}u}_{\phi} \;.
\end{equs}
We now claim the following bounds to hold, as operator norms from $L^2_\phi \to L^2_\phi$ in the sense of the norm defined above:
\begin{align}
\norm{(1-\Aeps)^{-1}}_\phi &\leq 1 \label{e.operatorbound1}\;, \\
\norm{\del_z^2 (1-\Aeps)^{-1}}_\phi &\lesssim \eps^{-2} \label{e.operatorbound2}\;.
\end{align}
Note that these bounds immediately imply $\norm{\del_z(1-\Aeps)^{-1}}_{\phi} \lesssim \eps^{-1}$ which follows from the Cauchy-Schwartz inequality. These three operator bounds are enough to prove \eqref{e:calderon2}, since by changing back to the $x$ variables, we have that 
\begin{align*}
\norm{u}_{\phi} &= \norm{g(x/\eps)^{1/2}f(x) (\phi'(x/\eps))^{1/2}} \leq \norm{g}^{1/2}_{\infty} \norm{\phi'}_{\infty}^{1/2} \norm{f}\;.
\end{align*}
Hence we need only prove the claimed bounds. To prove \eqref{e.operatorbound1}, we utilise the 
identity
\[
\rm{spec} (1- \Aeps ) = {\rm spec} \left(1-\gen_\eps \right)\;,
\]
which follows from the fact that $\Aeps$ and $\gen_\eps$ are conjugated via a bounded operator with bounded inverse.
Since $\gen_\eps$ generates a Markov semigroup, elements in its spectrum have positive real part. 
Since $(1-\Aeps)$ is self-adjoint in the Hilbert space generated by the norm $\norm{\cdot}_{\phi}$ with the corresponding inner product, it thus follows that
\[
\norm{(1-\Aeps)^{-1}}_\phi \leq 1
\] 
using the spectral theorem \cite{hairer09}. By writing $\del_z^2$ in terms of $\Aeps$ and $W$, we also have that 
\begin{equs}
\norm{\del_z^2 (1-\Aeps)^{-1}}_{\phi} &\leq 1 +  \norm{\left(1+\frac{1}{\eps^2}W(\psi(\cdot/\eps))\right)(1-\Aeps)^{-1}}_{\phi} \\
&\lesssim 1 + \eps^{-2} (1+\norm{W}_{\infty}) \norm{(1-\Aeps)^{-1}}_{\phi}\;,
\end{equs}
which proves \eqref{e.operatorbound2} and hence \eqref{e.interpolationlemma1}. 
To prove the second part of the lemma, just as in \eqref{e:calderon2} it is sufficient to show that
\[
\norm{(1-\gen_\eps)^{-1}(1-\del_x^2) f} \leq C \eps^{-2} \norm{f}\;.
\]
But we can use the fact that the operator norm is preserved under taking the adjoint, so that
\[
\norm{(1-\gen_\eps)^{-1}(1-\del_x^2) } = \norm{(1-\del_x^2)(1-\gen_\eps^*)^{-1}} \;.
\]
It is therefore sufficient to prove \eqref{e:calderon2} with $\gen_\eps$ replaced with its adjoint $\gen_\eps^*$. An easy calculation shows that
\[
\gen_\eps^* = {\Ltilde_\eps}  + \frac{1}{\eps^2} U(x/\eps) 
\]
where 
\[
\Ltilde_\eps = \frac{1}{\eps} \tilde{b}(x/\eps)\del_x  + \frac{1}{2}\sigma^2(x/\eps) \del_x^2\;.
\]
We can reduce $\Ltilde_\eps$ to a Schr{\"o}dinger operator with potential $\tildeW$ in the same way that we did for $\gen_\eps$, and hence reduce $\gen_\eps^*$ to a Schr{\"o}dinger operator with potential $\tildeW + U$. The second claim then follows similarly to the first. 
\end{proof}
\begin{rmk}\label{rmk.sharpness}
We would like to briefly comment on the sharpness of the two estimates obtained in Lemma \ref{lem:interpolation}. The second estimate \eqref{e.interpolationlemma2} is sharp. In fact, in the case $\sigma=1$, upon rewriting the estimate in the adjoint setting, as done in the proof, it is clear that taking $f=\rho(x/\eps)$ will prove sharpness. Unfortunately, this argument does not work for the first estimate \eqref{e.interpolationlemma1}. This comes down to the unlucky fact that the zero eigenvector of $\gen_\eps$ is the constant function (and not $\rho(x/\eps)$), which of course does not yield powers of $\eps$ when integrated. In fact, we believe that estimate \eqref{e.interpolationlemma1} is not sharp. However, improving the estimate would not considerably improve the strength of results in the sequel, so we do not attempt to do so.       
\end{rmk} 

\subsection{Estimating the semigroup}\label{ss.estimating}
A key ingredient in proving all three convergence results is an estimate on the low Fourier modes of the mild solution to \eqref{e:formulationSPDE}, that is
\[
\innerprod{u_\eps(t),e_m} = \sum_k \int_0^t \innerprod{\Seps(t-s) q_k^\eps e_k,e_m}dW_k(s)\;,
\]
for $|m| < \eps^{-1}$, recalling the notation $q_k^\eps(x) = q_k(x/\eps)$. This could be achieved by estimating $\Seps(t-s) q_k^\eps e_k$. However, this becomes troublesome when $k$ is large. It is more convenient to exploit the fact that
\[
\innerprod{u_\eps(t),e_m} = \sum_k \int_0^t \innerprod{ q_k^\eps e_k, \Seps^*(t-s)e_m}dW_k(s)
\]
and estimate $\Seps^*(t-s)e_m$, with $m$ fixed. We will prove that
\[
S^*(t)e_m(x) \approx \rho(x/\eps)e_m(x) e^{-\mu m^2 t} + f_\eps^\BL (x,t)\;,
\]
uniformly in $t\in[0,T]$. As before, $\rho$ is the invariant density of the $\gen$ and we define the ``boundary layer'' $f_\eps^\BL $ as a term that corrects the approximation when $t=\bigoh(\eps^2)$ and converges rapidly to zero when $t > \eps^2$. Such results can be obtained in the setting of martingale problems \cite{papa77} however, as we would like to obtain a bit of control over rates of convergence, we take the approach used in \cite{bensoussan78,pavliotis08}. 
\\

Let us set $f_\eps(x,t)=\Seps^*(t)e_m(x)$. We would then like to find an approximate solution to the PDE
\begin{equation}\label{e:PDEf}
\del_t f_\eps(x,t) = \gen_\eps^* f_\eps(x,t)  \;,\quad f_\eps(x,0) = e_m(x)\;,
\end{equation}
where the adjoint generator $\gen^*_\eps$ has periodic boundary conditions on $[0,2\pi]$. The standard approach to problems of this kind is to rewrite \eqref{e:PDEf} in the new variables $\tildex = x$ and $\tildey = x/\eps$ and separate the macroscopic dynamics from the microscopic dynamics. One can then obtain an approximate solution by introducing a power series expansion
\[
f_\eps(\tildex,\tildey,t) = f_0(\tildex,\tildey,t) + \eps f_1(\tildex,\tildey,t) + \eps^2 f_2(\tildex,\tildey,t)+ \dots
\]
into the PDE \eqref{e:PDEf} and solving for $f_0, f_1, f_2$ by matching powers of $\eps$. Under this procedure, one obtains 
\begin{align*}
f_0(x,x/\eps,t) &= \rho(x/\eps)e_m(x)e^{-\mu m^2 t}\;,\\
f_1(x,x/\eps,t) &= \Phi_1(x/\eps)\del_x e_m(x) e^{-\mu m^2 t}\;,\\
f_2(x,x/\eps,t) &= \Phi_2(x/\eps)\del_x^2 e_m(x) e^{-\mu m^2 t}\;,
\end{align*}
where $\Phi_1, \Phi_2 \in \mathcal{C}_b^2$. This approach encounters a small problem in that the approximation breaks down when $t=\bigoh(\eps^2)$. The problem is averted by introducing a temporal boundary layer term, also known as a corrector, which we define as
\[
f_{\eps}^\BL (x,t) = \left(\Seps^*(t)\left(1-\rho(x/\eps)\right)\right)e_m(x)\;.
\]
One can see that the boundary layer term corrects the discrepancy in the initial condition of the approximation $S_\eps^*(t)e_m(x) \approx \rho(x/\eps)e_m(x) e^{-\mu m^2 t}$, indeed, the boundary layer term's sole purpose is to correct the approximation for small times $t$. We therefore define the remainder term $r_\eps$ by setting
\begin{equation}
f_\eps(x,t) = f_0(\tildex,\tildey,t) + \eps f_1(\tildex,\tildey,t) + \eps^2 f_2(\tildex,\tildey,t)
+ f_\eps^\BL (x,t) + r_\eps(x,t) \label{e:remainderexplicit} 
\end{equation}
Note that our definition of the remainder depends explicitly on the wavenumber $m$, however, for convenience we omit this from the notation. Using the method described above, one can write down the following convenient expression for the remainder.
\begin{lemma}\label{lem:remainderPDE}\label{l.representation}
If $\eps|m| < 1$ and $r_\eps$ is the remainder defined in \eqref{e:remainderexplicit} then we can write 
\begin{equs}
r_\eps(x,t) =\; & \Seps^*(t)r_\eps(x,0)+  \;\eps \int_0^t \Seps^*(t-s) F_1(x,x/\eps,s) \,ds\label{e:remainderduhamel}\\ 
 +&\; \eps^2 \int_0^t \Seps^*(t-s) F_2(x,x/\eps,s) ds + \int_0^t \Seps^*(t-s)(\del_s -\gen_\eps^*)f_\eps^\BL (x,s)\,ds \;,
\end{equs}
where the functions $F_1$ and $F_2$ satisfy the bounds
\begin{equation}
\norm{F_1(t)} \lesssim (1\vee |m|^3) e^{-\mu m^2 t}
\quad \text{and} \quad
\norm{F_2(t)} \lesssim (1\vee |m|^4) e^{-\mu m^2 t}\;,
\end{equation}
where $\mu > 0$ is a constant determined by $\gen$. 
\end{lemma}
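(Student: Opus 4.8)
The plan is to substitute the ansatz \eqref{e:remainderexplicit} into the PDE \eqref{e:PDEf} and track exactly which terms fail to cancel, then rewrite the resulting inhomogeneous linear equation for $r_\eps$ in Duhamel form. First I would compute $(\del_t - \gen_\eps^*)$ applied to the first three terms $f_0 + \eps f_1 + \eps^2 f_2$. Writing $\gen_\eps^* = \eps^{-2}\gen_0^* + \eps^{-1}\gen_1^* + \gen_2^*$ in the two-scale variables $\tildex = x$, $\tildey = x/\eps$ (where $\gen_0^*$ acts on the fast variable only, $\gen_1^*$ is the cross term, and $\gen_2^*$ the slow part), the functions $\rho$, $\Phi_1$, $\Phi_2$ are chosen precisely so that the $\bigoh(\eps^{-2})$, $\bigoh(\eps^{-1})$ and $\bigoh(1)$ terms vanish by the cell-problem identities: $\gen_0^*\rho = 0$ defines $\rho$; the next relation defines $\Phi_1$ (solvable because $\int b\rho = 0$ by the centering condition and the remark after Assumption~\ref{ass:elliptic}); the relation at order $1$ defines $\mu$ via the solvability condition and then $\Phi_2$. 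Hence $(\del_t - \gen_\eps^*)(f_0 + \eps f_1 + \eps^2 f_2)$ equals a remainder consisting only of positive powers of $\eps$, namely $-\eps F_1 - \eps^2 F_2$ for suitable $F_1, F_2$. These $F_i$ are built from $\Phi_1, \Phi_2$ and derivatives of $e_m$, so each spatial derivative of $e_m$ contributes a factor $|m|$; combined with the overall $e^{-\mu m^2 t}$ and the $\mathcal{C}_b^2$ bounds on $\Phi_1, \Phi_2$ (guaranteed by the smoothness of $b,\sigma$), this yields $\norm{F_1(t)} \lesssim (1\vee|m|^3)e^{-\mu m^2 t}$ and $\norm{F_2(t)} \lesssim (1\vee|m|^4)e^{-\mu m^2 t}$.

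Next I would handle the boundary layer term. By construction $f_\eps^\BL(x,t) = (\Seps^*(t)(1-\rho^\eps))e_m$, so it does \emph{not} exactly solve $(\del_t - \gen_\eps^*)f_\eps^\BL = 0$ because $\Seps^*$ acts on $(1-\rho^\eps)$ but $e_m(x)$ is carried along as a multiplier rather than inside the semigroup; the discrepancy $(\del_t - \gen_\eps^*)f_\eps^\BL$ is exactly the commutator of $\gen_\eps^*$ with multiplication by $e_m$ applied to $\Seps^*(t)(1-\rho^\eps)$, which involves $\del_x e_m$ and $\del_x^2 e_m$ and hence more powers of $|m|$. I keep this term as is, writing it as the last integral in \eqref{e:remainderduhamel}; its smallness for $t \gg \eps^2$ will come later from \eqref{e:elliptic}. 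Finally, since $r_\eps$ satisfies the linear equation $\del_t r_\eps = \gen_\eps^* r_\eps + (\text{the three source terms identified above})$ with initial data $r_\eps(x,0)$ determined by matching \eqref{e:remainderexplicit} at $t=0$ (it is $e_m - \rho^\eps e_m - \eps\Phi_1^\eps\del_x e_m - \eps^2\Phi_2^\eps\del_x^2 e_m - f_\eps^\BL(x,0)$, which actually simplifies since $f_\eps^\BL(x,0) = (1-\rho^\eps)e_m$, leaving $r_\eps(x,0) = -\eps\Phi_1^\eps\del_x e_m - \eps^2\Phi_2^\eps\del_x^2 e_m$), Duhamel's principle for the $\mathcal{C}_0$-semigroup $\Seps^*$ gives exactly \eqref{e:remainderduhamel}.

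The main obstacle is the bookkeeping in the first step: one must expand $\gen_\eps^*$ in the fast/slow variables carefully, identify the correct cell problems, verify their solvability using the centering condition, and then collect \emph{all} the leftover terms — including the ones where a slow derivative $\del_x$ hits a coefficient like $\Phi_1(x/\eps)$, producing both an $\eps^{-1}$ that must cancel and a genuine $\bigoh(1)$ or $\bigoh(\eps)$ contribution to $F_1, F_2$. Getting the exact polynomial degree in $|m|$ right requires noting that $F_1$ picks up at most three derivatives of $e_m$ (from $\gen_2^* f_1$ and $\gen_1^* f_2$ type terms) and $F_2$ at most four (from $\gen_2^* f_2$), which is where the $|m|^3$ and $|m|^4$ come from; the exponential factor $e^{-\mu m^2 t}$ is inherited verbatim from the explicit $t$-dependence of $f_0, f_1, f_2$. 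Everything else — the semigroup being $\mathcal{C}_0$, the regularity $\Phi_i \in \mathcal{C}_b^2$, the validity of Duhamel — is either already established in the excerpt or entirely routine.
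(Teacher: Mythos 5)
Your proposal is correct and follows exactly the route the paper intends: the paper's own proof is only a one-line reference to the standard two-scale expansion ``described above'' and to \cite{bensoussan78,pavliotis08}, and your write-up supplies precisely those calculations — the cell problems defining $\rho$, $\Phi_1$, $\Phi_2$ and $\mu$, the identification of the leftover $\bigoh(\eps)$ and $\bigoh(\eps^2)$ source terms with the correct powers of $|m|$, the commutator origin of the boundary-layer source, the simplification of $r_\eps(x,0)$, and the final Duhamel step. No gaps.
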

\begin{proof}
The method of proof is described above. One can find similar calculations in \cite{bensoussan78, pavliotis08}. 
\end{proof}
Each term in \eqref{e:remainderduhamel} can be bounded without too much trouble, except for the boundary layer term, which we shall treat separately. 
\begin{lemma}\label{lem:boundarybound}\label{l.boundarylayer}
If $\eps|m|<1$, then for any $t\in[0,T]$, we have that 
\begin{equation}
\norm{f_\eps^\BL (t)} \lesssim \exp(- t/\eps^2 )\;.
\end{equation}
Furthermore, for any $s\in[0,t]$ we have that
\begin{equation}
 \norm{ S_\eps^*(t-s)(\gen_\eps^* - \del_s) f_{\eps}^\BL (x,s)} \lesssim \frac{m}{\eps}\exp({-s/\eps^2})\;.
\end{equation}
In both cases, the proprtionality constants are independent of $m$, provided that $\eps |m| \le 1$.
\end{lemma}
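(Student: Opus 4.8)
The plan is to prove the two bounds separately. Throughout I write $f_\eps^\BL(\cdot,t) = h_\eps(\cdot,t)\,e_m$, where $h_\eps(\cdot,t) := \Seps^*(t)\bigl(1-\rho(\cdot/\eps)\bigr)$ is the scalar boundary--layer profile; since $\Seps^*$ is generated by $\gen_\eps^*$, the function $h_\eps$ solves $\del_s h_\eps = \gen_\eps^* h_\eps$ with $h_\eps(\cdot,0) = 1-\rho(\cdot/\eps)$. The first bound is then immediate: $|e_m(x)| = 1$ pointwise, so multiplication by $e_m$ is an isometry of $\Ltwopi$ and $\norm{f_\eps^\BL(t)} = \norm{h_\eps(\cdot,t)}$, which is exactly \eqref{e:elliptic} (with $\omega = 1$), the constant there being independent of $m$.

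For the second bound I would first compute $(\gen_\eps^* - \del_s)f_\eps^\BL$ explicitly. On the one hand $\del_s f_\eps^\BL = (\del_s h_\eps)\,e_m = (\gen_\eps^* h_\eps)\,e_m$. On the other, applying the Leibniz rule to $\gen_\eps^*(h_\eps e_m)$ with $\del_x e_m = im\,e_m$, and using that $\gen_\eps^*$ has $\del_x^2$--coefficient $\tfrac12\sigma^2(x/\eps)$ and first--order coefficient of the form $\eps^{-1}\tilde b(x/\eps)$ with $\tilde b$ bounded and $2\pi$--periodic (the reduction in the proof of Lemma~\ref{lem:interpolation}), the zeroth--order term of $\gen_\eps^*(h_\eps e_m)$ is again $(\gen_\eps^* h_\eps)\,e_m$ and the cross terms give
\[
(\gen_\eps^* - \del_s)f_\eps^\BL(\cdot,s) = e_m\Bigl( im\,\sigma^2(\cdot/\eps)\,\del_x h_\eps(\cdot,s) + \tfrac{im}{\eps}\,\tilde b(\cdot/\eps)\,h_\eps(\cdot,s) - \tfrac{m^2}{2}\,\sigma^2(\cdot/\eps)\,h_\eps(\cdot,s) \Bigr)\,.
\]
Since $\norm{\Seps^*(t-s)} \le C_T$ by Lemma~\ref{l.semigroup}, and multiplication by $e_m$, $\sigma^2(\cdot/\eps)$ and $\tilde b(\cdot/\eps)$ are bounded operators with norms independent of $m$ and $\eps$, it suffices to bound the $\Ltwopi$ norm of the bracket. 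Its two derivative--free terms are controlled by $\norm{h_\eps(\cdot,s)} \lesssim \exp(-s/\eps^2)$ from \eqref{e:elliptic}, and contribute $\lesssim (m/\eps + m^2)\exp(-s/\eps^2) \lesssim (m/\eps)\exp(-s/\eps^2)$, using $\eps|m| \le 1$ to absorb the $m^2$.

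The remaining term needs a bound on $\norm{\del_x h_\eps(\cdot,s)}$, which is the one genuinely nontrivial point. By the adjoint form of the rescaling identity \eqref{e:semigroupscaling} (the one used to derive \eqref{e:elliptic}) one has $h_\eps(x,s) = \bigl(S^*(s/\eps^2)(1-\rho)\bigr)(x/\eps)$, so the chain rule and $2\pi$--periodicity (recall $\eps^{-1}\in\naturals$) give $\norm{\del_x h_\eps(\cdot,s)} = \eps^{-1}\norm{\del_x S^*(s/\eps^2)(1-\rho)}$. I would then estimate $\norm{\del_x S^*(\tau)(1-\rho)}$ uniformly in $\tau\ge0$ by splitting into two regimes: for $\tau\ge1$, factor $S^*(\tau) = S^*(1)S^*(\tau-1)$ and combine $\norm{\del_x S^*(1)}\lesssim 1$ (from \eqref{e:bounded3}) with the spectral--gap decay $\norm{S^*(\tau-1)(1-\rho)}\lesssim\exp(-\omega(\tau-1))$ (third assertion of Lemma~\ref{l.semigroupstar}); for $0\le\tau\le1$, use the variation--of--constants identity $S^*(\tau)(1-\rho) = (1-\rho) + \int_0^\tau S^*(r)\gen^*(1-\rho)\,dr$ together with $\norm{\del_x S^*(r)}\lesssim r^{-1/2}$ (integrable near $0$) and the boundedness of $\gen^*(1-\rho) = \gen^* 1 = -b' + \tfrac12(\sigma^2)''$ (using $\gen^*\rho = 0$ and $b,\sigma\in\mathcal{C}_b^2$). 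Together these yield $\norm{\del_x S^*(\tau)(1-\rho)}\lesssim\exp(-\omega\tau)$ for all $\tau\ge0$, hence $\norm{\del_x h_\eps(\cdot,s)}\lesssim\eps^{-1}\exp(-\omega s/\eps^2)$, so the derivative term also contributes $\lesssim(m/\eps)\exp(-s/\eps^2)$ (with $\omega=1$). Adding the three contributions and recalling $\norm{\Seps^*(t-s)}\le C_T$ gives the claim, with every constant independent of $m$ whenever $\eps|m|\le1$.

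The main obstacle is exactly this gradient estimate: differentiating the fast profile $1-\rho(\cdot/\eps)$ costs a factor $\eps^{-1}$ that must be accompanied by the full $\exp(-s/\eps^2)$ decay. A single application of the smoothing bound \eqref{e:bounded3} would leave a non--uniform (though integrable) factor $\tau^{-1/2}$; recovering a clean exponential forces one to peel off a fixed unit of semigroup time to feed into the spectral gap, while treating short times $\tau\le1$ separately through the variation--of--constants formula. Everything else is routine bookkeeping of powers of $m$ and $\eps$.
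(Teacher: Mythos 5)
Your proof is correct and follows essentially the same route as the paper: the same explicit Leibniz computation of $(\gen_\eps^* - \del_s)f_\eps^\BL$, discarding the outer semigroup by its uniform boundedness, and the same key gradient bound $\norm{\del_x S^*(\tau)(1-\rho)}\lesssim \exp(-\omega\tau)$ obtained by peeling off $S^*(1)$ for large $\tau$ and treating short times separately. The only (harmless) difference is that for the short-time regime you use the variation-of-constants identity with $\norm{\del_x S^*(r)}\lesssim r^{-1/2}$, whereas the paper conjugates by $(\gen^*)^{-1}$; both yield the required uniform bound.
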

\begin{proof}
For the sake of brevity, throughout this proof and the next we will simply write $m$ instead of $1\vee |m|$. We also introduce the shorthand
\[
\rhobar_{t/\eps^2}(x/\eps) := \bigl(S^*(t/\eps^2)\left(1-\rho\right)\bigr)(x/\eps) = \bigl(S_\eps^*(t)\left(1-\rho^\eps\right)\bigr)(x)
\]
where the last identity follows from the rescaling property \eqref{e:semigroupscaling}, recalling that $\rho^\eps(x) = \rho(x/\eps)$. We then have that 
\begin{align*}
\norm{f_\eps^\BL (t)} = \norm{\rhobar_{t/\eps^2}^\eps e_m} =\norm{\rhobar_{t/\eps^2}^\eps} \lesssim \exp(-t/\eps^2)\;, 
\end{align*}
which follows from \eqref{e:elliptic}. For the second result, notice that
\begin{align*}
(\gen_\eps^* - \del_s) f_\eps^\BL (x,s) = &-\frac{1}{\eps} b(x/\eps)\rhobar_{s\over\eps^2}(x/\eps)\del_x e_m(x) + \del_x \left( \sigma^2(x/\eps)\rhobar_{s\over\eps^2}(x/\eps)\right)\del_x e_m(x)\\ &+ \frac{1}{2} \sigma^2 (x/\eps) \rhobar_{s\over\eps^2}(x/\eps)\del_x^2 e_m(x)\;. 
\end{align*}
Therefore, the quantity
\begin{align*}
\norm{S_\eps(t-s) (\gen_\eps^* - \del_s) f_\eps^\BL (x,s)  } \lesssim \norm{(\gen_\eps^* - \del_s) f_\eps^\BL (x,s)}
\end{align*}
is bounded by
\begin{equation}\label{e.boundarylayerterm}
\frac{m}{\eps} \norm{b \rhobar_{s/\eps^2}} + \frac{m}{\eps^2} \norm{\del_x \left( \sigma^2 \rhobar_{s/\eps^2} \right)} + m^2 \norm{\sigma^2 \rhobar_{s/\eps^2}}\;.
\end{equation}
We furthermore have the bound
\begin{align*}
\norm{\del_x \left( \sigma^2 \rhobar_{s/\eps^2} \right)} &\lesssim \norm{\del_x \sigma^2}_{\infty} \norm{\rhobar_{s/\eps^2}} + \norm{\sigma^2}_{\infty} \norm{\del_x \rhobar_{s/\eps^2}}\\
 &\lesssim \left(\norm{\del_x\sigma^2}_{\infty} + \norm{\sigma^2}_{\infty}  \right) \exp(-s/\eps^2)\;,
\end{align*}
where we have used the bound 
\begin{equation}\label{e.rhobarclaim}
\norm{\del_x \rhobar_{s/\eps^2}} \lesssim \exp (-s/\eps^2)\;.
\end{equation}
which we will prove shortly. Therefore, we can bound \eqref{e.boundarylayerterm} by
\begin{align*}
\frac{m}{\eps}\norm{b}_\infty \exp(-s/\eps^2) &+ \frac{m}{\eps}(\norm{\del_x\sigma^2}_{\infty} + \norm{\sigma^2}_{\infty}) \exp(-s/\eps^2) + m^2 \norm{\sigma^2}_\infty \exp(-s/\eps^2)\\ 
&\lesssim \frac{m}{\eps}\exp(-s/\eps^2)\;.
\end{align*}
Here we have used Assumption \ref{ass:elliptic} to obtain the required bounds on $b$ and $\sigma$ and also the assumption $\eps|m|<1$. This proves the bounds stated in the lemma. To prove the claimed bound \eqref{e.rhobarclaim}, first assume $s>\eps^2$, then
\begin{align*}
\norm{\del_x \rhobar_{s/\eps^2}} &= \norm{\del_x S^*(1) S^*(s/\eps^2-1)(1-\rho)}\\
&\lesssim \norm{\del_x S^*(1)} \norm{S^*(s/\eps^2 -1)(1-\rho)} \lesssim \exp(-s/\eps^2)\;,
\end{align*}
where we have used Lemma \ref{l.semigroupstar}. If $s\leq \eps^2$ then 
\begin{align*}
\norm{\del_x \rhobar_{s/\eps^2}} &= \norm{\del_x (\gen^*)^{-1} S^*(s/\eps^2) \gen^*(1-\rho)}\\ &\lesssim \norm{\del_x (\gen^*)^{-1}} \norm{\gen^*1} < \infty\;.
\end{align*}
The boundedness of $\norm{\del_x (\gen^*)^{-1}}$ follows from the proof of Lemma \ref{l.semigroupstar}, where we showed that $\gen$ and $\del_x^2$ share the same interpolation spaces. We can therefore bound $\norm{\del_x \rhobar_{s/\eps^2}}$ uniformly for $s\in[0,\eps^2]$, which, together with the bound for $s>\eps^2$, implies \eqref{e.rhobarclaim}.
\end{proof}
Note that $r_\eps$ contains extra terms $f_1$ and $f_2$ that are only in place to facilitate the proof of Lemma \ref{l.representation}. We therefore define the following new remainder for the approximation that we actually use
\[
\Seps^*(t)e_m(x) = \rho(x/\eps) e_m(x)e^{ -\mu m^2 t} + f_\eps^\BL (x,t) + R_\eps(x,t)\;.
\]
We now obtain the estimates on $R_\eps$.
\begin{lemma}
\label{l.remainder}
If $\eps|m|<1$ then we have the estimates
\begin{align}
\suptime \norm{R_\eps(t)} \lesssim \eps (1\vee|m|) \quad\text{and} \quad \int_0^T\norm{R_\eps(t)}_{H^1} dt \lesssim (1\vee|m|) \;.
\end{align}
We also have that
\begin{equation}
\suptime \norm{\del_t R_\eps(t)} \lesssim \frac{1\vee |m|^2}{\eps^2}\;.
\end{equation}
 
\end{lemma}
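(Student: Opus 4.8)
The plan is to exploit the two decompositions of $\Seps^*(t)e_m$ already at our disposal: equation \eqref{e:remainderexplicit}, and
\[
\Seps^*(t)e_m = \rho^\eps e_m e^{-\mu m^2 t} + f_\eps^\BL(\cdot,t) + R_\eps(\cdot,t)\;,
\]
where $\rho^\eps(x)=\rho(x/\eps)$. Since $f_0(x,x/\eps,t)=\rho(x/\eps)e_m(x)e^{-\mu m^2 t}$, subtracting the two gives $R_\eps=\eps f_1+\eps^2 f_2+r_\eps$, so it suffices to treat these three pieces. The pieces $\eps f_1,\eps^2 f_2$ are disposed of by hand: using $\del_x^j e_m=(im)^j e_m$, the regularity $\Phi_1,\Phi_2\in\mathcal{C}_b^2$ and $\eps|m|<1$, one gets $\norm{\eps f_1(t)}\lesssim\eps(1\vee|m|)$ and $\norm{\eps^2 f_2(t)}\lesssim(\eps|m|)^2\lesssim\eps(1\vee|m|)$; differentiating the fast variable costs a factor $\eps^{-1}$, so that $\norm{\eps f_1(t)}_{H^1}+\norm{\eps^2 f_2(t)}_{H^1}\lesssim(1\vee|m|^2)e^{-\mu m^2 t}$, whose $[0,T]$-integral is $\lesssim 1\vee|m|$; and $\del_t(\eps f_1)=-\mu m^2\eps f_1$, $\del_t(\eps^2 f_2)=-\mu m^2\eps^2 f_2$ have size $\eps|m|^3$ and $(\eps|m|)^2|m|^2$ respectively, both $\lesssim(1\vee|m|^2)\eps^{-2}$ when $\eps|m|<1$.

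For $r_\eps$ I would use the Duhamel representation \eqref{e:remainderduhamel}. Evaluating \eqref{e:remainderexplicit} at $t=0$, where $f_0+f_\eps^\BL=e_m$, gives $r_\eps(x,0)=-\eps\Phi_1(x/\eps)\del_x e_m-\eps^2\Phi_2(x/\eps)\del_x^2 e_m$, hence $\norm{r_\eps(0)}\lesssim\eps(1\vee|m|)$. Bounding the four terms of \eqref{e:remainderduhamel} in $L^2$: the first is $\lesssim\norm{r_\eps(0)}$ by Lemma \ref{l.semigroup}; the $F_j$-terms are $\lesssim\eps^j\int_0^t(1\vee|m|^{2+j})e^{-\mu m^2 s}\,ds\lesssim\eps^j(1\vee|m|^j)\lesssim\eps(1\vee|m|)$, using the bounds on $F_j$ from Lemma \ref{l.representation} together with the elementary fact $\int_0^t(1\vee|m|^{2+j})e^{-\mu m^2 s}\,ds\lesssim 1\vee|m|^j$; and the boundary-layer term is $\lesssim\int_0^t\frac{1\vee|m|}{\eps}e^{-s/\eps^2}\,ds\lesssim\eps(1\vee|m|)$ by Lemma \ref{l.boundarylayer} and $\int_0^\infty e^{-s/\eps^2}\,ds=\eps^2$. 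This gives the first bound.

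For $\int_0^T\norm{r_\eps(t)}_{H^1}\,dt$ I would write $\norm{\cdot}_{H^1}=\norm{(1-\del_x^2)^{1/2}\cdot}$, pass into the interpolation spaces of $\gen_\eps^*$ via the adjoint form of \eqref{e.interpolationlemma1} with $\gamma=\tfrac12$ (at the cost of a factor $\eps^{-1}$), and apply the smoothing bound $\norm{(1-\gen_\eps^*)^{1/2}\Seps^*(t)}\lesssim t^{-1/2}$ from Lemma \ref{l.semigroup}. After Fubini each time-convolution contributes only the harmless factor $\int_s^T(t-s)^{-1/2}\,dt\lesssim 1$, so the first three terms reduce to the scalar integrals estimated above, now carrying prefactors $\eps^{-1}\norm{r_\eps(0)}$ and $\eps^{j-1}\int_0^T(1\vee|m|^{2+j})e^{-\mu m^2 s}\,ds$, all $\lesssim 1\vee|m|$. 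The delicate point — which I expect to be the main obstacle — is the boundary-layer term: here the interpolation loss $\eps^{-1}$ combines with the factor $\eps^{-1}$ already present in the estimate $\norm{(\gen_\eps^*-\del_s)f_\eps^\BL(s)}\lesssim\frac{1\vee|m|}{\eps}e^{-s/\eps^2}$ (which is exactly the intermediate bound obtained in the proof of Lemma \ref{l.boundarylayer}) to produce an $\eps^{-2}$, and this is absorbed \emph{precisely} by $\int_0^T e^{-s/\eps^2}\,ds\lesssim\eps^2$; since the target bound $\eps^{-2}$ has no slack, this cancellation must be used sharply.

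Finally, for $\suptime\norm{\del_t R_\eps(t)}$ it is cleanest to avoid the expansion and differentiate the second decomposition directly. Since $e_m$ and $1-\rho^\eps$ lie in the domain of $\gen_\eps^*$, which commutes with $\Seps^*(t)$,
\[
\del_t R_\eps=\Seps^*(t)\gen_\eps^* e_m+\mu m^2\rho^\eps e_m e^{-\mu m^2 t}-\bigl(\Seps^*(t)\gen_\eps^*(1-\rho^\eps)\bigr)e_m\;,
\]
using $\del_t f_\eps^\BL(\cdot,t)=\bigl(\Seps^*(t)\gen_\eps^*(1-\rho^\eps)\bigr)e_m$. Now $\gen_\eps^*(1-\rho^\eps)=\eps^{-2}(\gen^*1)^\eps$ by a direct computation (using $\gen^*\rho=0$ and the infinitesimal form of the rescaling identity \eqref{e:semigroupscaling}), so that term has $L^2$-norm $\lesssim\eps^{-2}$ by $\norm{\Seps^*(t)}\lesssim 1$; the explicit expression $\gen_\eps^* e_m=\bigl(-\tfrac12 m^2\sigma^2(\cdot/\eps)+\tfrac{im}{\eps}\tilde{b}(\cdot/\eps)+\tfrac1{\eps^2}U(\cdot/\eps)\bigr)e_m$ has $L^2$-norm $\lesssim m^2+|m|\eps^{-1}+\eps^{-2}\lesssim\eps^{-2}$ because $\eps|m|<1$; and $\mu m^2\norm{\rho^\eps e_m}\lesssim m^2\lesssim\eps^{-2}$. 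Altogether $\norm{\del_t R_\eps(t)}\lesssim(1\vee|m|^2)\eps^{-2}$, completing the plan.
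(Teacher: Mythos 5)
Your proposal is correct and follows essentially the same route as the paper: the same decomposition $R_\eps = r_\eps + \eps f_1 + \eps^2 f_2$ with the Duhamel representation of Lemma~\ref{l.representation} for the $L^2$ bound, the same interpolation-plus-smoothing argument (Lemmas~\ref{lem:interpolation} and~\ref{l.semigroup}) for the $H^1$ bound, and the same direct differentiation of the second decomposition for $\del_t R_\eps$. The only differences are cosmetic (Fubini before, rather than after, the pointwise-in-$t$ estimate, and a more explicit computation of $\gen_\eps^*(1-\rho^\eps)$), and the delicate cancellation you flag in the boundary-layer term is handled the same way in the paper.
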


\begin{proof}
We will first prove the bound for $\norm{R_\eps(t)}$. From the definition of the remainder $R_\eps$, we have that 
\[
R_\eps(t) = r_\eps(t) +  \eps f_1(t) +  \eps^2 f_2(t)\;\;,  
\]
where $f_1(t) = i m \Phi_1(x/\eps) e_m(x)e^{-\mu m^2 t}$ and $f_2(t) = - m^2 \Phi_2(x/\eps) e_m(x) e^{-\mu m^2 t}$. As a consequence, we obtain 
\[
\norm{R_\eps(t)} \lesssim \norm{r_\eps(t)} + \eps \norm{f_1(t)} + \eps^2 \norm{f_2(t)} \lesssim \norm{r_\eps(t)} + \eps m \;. 
\]
From Lemma \ref{l.representation} we have that 
\begin{align*}
\norm{r_\eps(t)} &\lesssim \norm{\Seps^*(t)r_\eps(0)} + \eps \int_0^t \norm{\Seps^*(t-r)F_1(r)}dr\\ &+ \eps^2 \int_0^t \norm{\Seps^*(t-r)F_2(r)} dr + \int_0^t \norm{ \Seps^*(t-r) (\del_r - \gen_\eps^*)f_\eps^\BL (r)}dr  \;.
\end{align*}
Each of the above terms shall now be bounded separately. Using the uniform boundedness of the semigroup, we have that 
\[
\norm{\Seps^*(t)r_\eps(0)} \lesssim \norm{r_\eps(0)} \lesssim \eps m\;,
\]
which follows from \eqref{e:remainderexplicit}. If we use the bound on $\norm{F_1}$ given in Lemma \ref{l.representation} we have that 
\begin{align*}
\eps \int_0^t \norm{\Seps^*(t-r)F_1(r)}dr &\lesssim \eps \int_0^t \norm{F_1(r)}dr\notag \\ 
&\lesssim 
\eps \int_0^t m^3 \exp(-\mu m^2 r) dr \lesssim \eps m\;.
\end{align*}
Similarly, we have that 
\[
\eps^2 \int_0^t \norm{\Seps^*(t-r)F_2(r)}dr\lesssim \eps^2 m^2 \lesssim \eps m\;.
\]
Finally, from Lemma \ref{l.boundarylayer} we have that 
\[
\int_0^t \norm{ \Seps^*(t-r) (\del_r - \gen_\eps^*)f_\eps^\BL (r)}dr \lesssim \int_0^t \frac{m}{\eps}e^{-r/\eps^2}dr  \lesssim \eps m \;.
\]
Putting all this together, we have that 
\[
\norm{R_\eps(t)} \lesssim \eps m \;,
\]
whenever $\eps |m| < 1$. We now seek the bound on $\norm{R_\eps(t)}_{H^1}$. We have that 
\begin{align*}
\norm{R_\eps(t)}_{H^1} &\lesssim \norm{r_\eps(t)}_{H^1} + \eps \norm{f_1(t)}_{H^1} + \eps^2\norm{f_2(t)}_{H^1}\\ 
&\lesssim \norm{(1-\del_x^2)^{1/2}(1-\gen_\eps)^{-1/2}} \norm{(1-\gen_\eps)^{1/2}r_\eps(t)} + m + \eps m^2\\
&\lesssim \eps^{-1} \norm{(1-\gen_\eps)^{1/2}r_\eps(t)} + m \;.
\end{align*}
Here we have used Lemma \ref{lem:interpolation} to switch between the the $\gen_\eps$ and $\del_x^2$ interpolation spaces. We have from Lemma \ref{l.representation} that 
\begin{equs}
\norm{(1-\gen_\eps)^{1/2}r_\eps(t)} &\lesssim  \norm{\Seps^*(t)(1-\gen_\eps)^{1/2}r_\eps(0)} + \eps \int_0^t \norm{\Seps^*(t-r)(1-\gen_\eps)^{1/2}F_1(r)}dr\\ &\quad + \eps^2 \int_0^t \norm{\Seps^*(t-r)(1-\gen_\eps)^{1/2}F_2(r)} dr\\ &\quad + \int_0^t \norm{ \Seps^*(t-r)(1-\gen_\eps)^{1/2} (\del_r - \gen_\eps^*)f_\eps^\BL (r)}dr \;.
\end{equs}
From Lemma \ref{l.semigroupstar}, we have that 
\[
\norm{\Seps^*(t)(1-\gen_\eps)^{1/2}} \lesssim |t|^{-1/2}\;,
\] 
for any $t\in(0,T]$. Therefore, we have that 
\[
\norm{\Seps^*(t)(1-\gen_\eps)^{1/2}r_\eps(0)} \lesssim |t|^{-1/2} \norm{r_\eps(0)} \lesssim \eps m |t|^{-1/2}\;.
\] 
Furthermore, we have that 
\begin{align*}
\eps \int_0^t \norm{\Seps^*(t-r)(1-\gen_\eps)^{1/2}F_1(r)}dr &\lesssim \eps \int_0^t |t-r|^{-1/2} \norm{F_1(r)}dr\notag \\ 
&\lesssim 
\eps \int_0^t m^3 |t-r|^{-1/2} \exp(-\mu m^2 r) dr\\ 
&\lesssim \eps m \left({|t|^{-1/2}} + m^2\exp(-\mu m^2 t)\right) \;. 
\end{align*}
Here we have bounded the above integral by splitting the range of integration in half. Similarly, we have that 
\begin{align*}
\eps^2 \int_0^t \norm{\Seps^*(t-r)(1-\gen_\eps)^{1/2}F_2(r)}dr \lesssim \eps m \left({|t|^{-1/2}} + m^2\exp(-\mu m^2 t)\right)\;.  
\end{align*}
Finally, from Lemma \ref{l.boundarylayer} we have that
\begin{align*}
\int_0^t \norm{\Seps^*(t-r)&(1-\gen_\eps)^{1/2}(\del_r - \gen_\eps^*)f_\eps^\BL (r)} dr \\
&\lesssim
\frac{m}{\eps}  \int_0^t |t-r|^{-1/2} \exp(- r/\eps^2) dr\\  &\lesssim \eps m \left( |t|^{-1/2} + \frac{\exp(-t/\eps^2)}{\eps^2}\right) \; . 
\end{align*}
Putting this all together, along with the fact that $\eps|m|<1$, we have the bound
\[
\norm{R_\eps(t)}_{H^1} \lesssim  m \left( 1 +  |t|^{-\gamma/2} + m^2 \exp(-\mu m^2 t) + \frac{\exp(-t/\eps^2)}{\eps^2} \right)\;,
\]
and the requested bound on $\int_0^T\norm{R_\eps(t)}_{H^1}dt $ follows. For the final estimate, we use the definition
\begin{align*}
 R_\eps(t) = \Seps^*(t) e_m(x) -\rho(x/\eps)e_m(x)e^{-\mu m^2 t} - \rhobar_{t/\eps^2}(x/\eps)e_m(x)\;.
\end{align*}
We then have
\begin{align*}
\suptime\norm{\del_t R_\eps(t) } &\lesssim \suptime\norm{\del_t \Seps^*(t) e_m} + m^2\norm{\rho} + \suptime\frac{\norm{\del_t \rhobar}}{\eps^2} \\
&\lesssim \suptime\norm{\del_t \Seps^*(t) e_m} + \frac{m^2}{\eps^2}\;,
\end{align*}
since the boundedness of $\suptime\norm{\del_t \rhobar}$ and $\norm{\rho}$ are guaranteed by the smoothness of $b$ and $\sigma$. Due to the uniform boundedness of the semigroup $\Seps(t)$, we also have that
\[
\suptime\norm{\del_t \Seps^*(t)e_m} = \suptime\norm{\Seps^*(t) \gen_\eps^* e_m} \lesssim \norm{ \gen_\eps^* e_m} \lesssim \frac{m^2}{\eps^2}\;,
\] 
where the last inequality follows from the smoothness assumptions placed on $b$ and $\sigma$. This proves the result. 
\end{proof}

\section{Convergence results}\label{s:convergence}
In this section, we shall state the precise formulation of the main results and then provide their proofs in full detail. The first convergence result is as follows. 
\begin{thm}\label{thm:stronglimit}\label{thm.stronglimit}
Suppose $u_\eps$ satisfies \eqref{e:formulationSPDE} and the conditions given in Assumptions \ref{ass:elliptic}, \ref{ass.strongnoise} hold true. Suppose furthermore that $u$ solves the stochastic heat equation
\begin{equation}\label{e.stronglimitSPDE}
du(x,t) = \mu \del_x^2 u(x,t) dt + \sum_k \innerprod{q_k,\rho}e_k(x)dW_k(t)\;,
\end{equation}
with $u(x,0)=0$. Let $s_\alpha = 0\vee {3\over 2}(1-2\alpha)$, then for any $s>s_\alpha$ there exists $\theta_0(s) > 0$ such that  
\begin{equation}
\label{e.stronglimit}
\E \suptime \norm{u_\eps(t)-u(t)}_{H^{-s}}^2 \lesssim \eps^{\theta} \;,
\end{equation}
for any $\theta<\theta_0(s)$. 
\end{thm}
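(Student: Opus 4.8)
The plan is to reduce everything to estimates on individual Fourier modes and then sum against the $H^{-s}$ weights. Writing $A_m(t)=\innerprod{u_\eps(t)-u(t),e_m}$, one has $\E\suptime\norm{u_\eps(t)-u(t)}_{H^{-s}}^2\le\sum_m(1+m^2)^{-s}\,\E\suptime|A_m(t)|^2$, so it suffices to bound each $\E\suptime|A_m(t)|^2$ and control the resulting series. The key structural observation is that cell-translation invariance of $\gen_\eps$ makes $\Seps^*(t)$ commute with the shift $T_\eps\colon f\mapsto f(\cdot+2\pi\eps)$; since $e_m$ is an eigenvector of $T_\eps$, the functions $\Seps^*(t)e_m$, $\rho(\cdot/\eps)e_m$ and $f_\eps^\BL(\cdot,t)$ all lie in the eigenspace of $T_\eps$ with eigenvalue $e^{2\pi i m\eps}$, which is spanned by $\{g(x/\eps)e^{ikx}:k\equiv m\ (\mathrm{mod}\ \eps^{-1})\}$. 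Consequently $\innerprod{q_k(\cdot/\eps)e_k,\Seps^*(t)e_m}=0$ unless $k\equiv m\pmod{\eps^{-1}}$, so from the mild formulation the $m$-th mode of $u_\eps$ is driven only by the ``resonant'' Brownian motions $W_{k_n}$, $k_n:=m+n\eps^{-1}$, namely $A_m(t)=\sum_{n\in\integers}\int_0^t\innerprod{q_{k_n}(\cdot/\eps)e_{k_n},\Seps^*(t-s)e_m-\text{(limit kernel)}}\,dW_{k_n}(s)$.

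For the low modes $\eps|m|<1$ I would insert the expansion of Lemma~\ref{l.remainder}, $\Seps^*(t-s)e_m=\rho(\cdot/\eps)e_m e^{-\mu m^2(t-s)}+f_\eps^\BL(t-s)+R_\eps(t-s)$. Since $\innerprod{q_m(\cdot/\eps)e_m,\rho(\cdot/\eps)e_m}=\innerprod{q_m,\rho}$, the $n=0$ part of the leading term reproduces exactly $\innerprod{u(t),e_m}$, so $A_m(t)$ splits into three genuine error contributions: (i) the off-diagonal resonances $\sum_{n\ne0}\innerprod{q_{k_n}\rho,e_{-n}}\int_0^t e^{-\mu m^2(t-s)}\,dW_{k_n}(s)$, a sum of independent Ornstein--Uhlenbeck processes, bounded in $\E\suptime|\cdot|^2$ by $\tfrac{\log(2+m^2T)}{1\vee m^2}\sum_{n\ne0}|\innerprod{q_{k_n}\rho,e_{-n}}|^2$ via Doob's inequality for the associated martingale; (ii) the boundary-layer term, controlled by $\norm{f_\eps^\BL(t)}\lesssim e^{-t/\eps^2}$; and (iii) the remainder term, controlled by $\suptime\norm{R_\eps(t)}\lesssim\eps(1\vee|m|)$ and $\int_0^T\norm{R_\eps(t)}_{H^1}\,dt\lesssim 1\vee|m|$ from Lemma~\ref{l.remainder} (transferring, where needed, the $H^1$-regularity of $R_\eps$ onto the test mode through Lemma~\ref{lem:interpolation}). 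In each case the sum over $n$ is handled using $\norm{q_k}\lesssim1\wedge|k|^{-\alpha}$: for $|n|\ge2$ one has $|k_n|\gtrsim|n|\eps^{-1}$, hence $\norm{q_{k_n}}\lesssim\eps^\alpha|n|^{-\alpha}$, while the only near-resonance that need not be small is $k_{\pm1}$, of size $\eps^{-1}-|m|$. The additional decay in $n$ needed to make these sums converge when $\alpha\le1/2$ is supplied by the uniform $H^1$ bound on $\qbar_k=q_k/\norm{q_k}$ (giving $|\innerprod{q_k\rho,e_{-n}}|\lesssim\norm{q_k}(1+|n|)^{-1}$), whereas for $\alpha>1/2$ bare square-summability $\sum_k\norm{q_k}^2<\infty$ already suffices -- which is exactly why the regularity hypothesis in Assumption~\ref{ass.strongnoise} is only imposed for $\alpha\le1/2$.

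For the high modes $\eps|m|\ge1$ the expansion is unavailable, so I would bound $\innerprod{u_\eps(t),e_m}$ and $\innerprod{u(t),e_m}$ separately: the limiting contribution is the $m$-th mode of a stochastic heat equation with coefficient $\innerprod{q_m,\rho}$, hence of size $O(\norm{q_m}^2/m^2)$, while for $u_\eps$ one combines the smoothing bound $\norm{(1-\gen_\eps^*)^\gamma\Seps^*(t)}\lesssim t^{-\gamma}$ of Lemma~\ref{l.semigroup} with the interpolation inequality \eqref{e.interpolationlemma2} to gain negative powers of $\eps|m|$; these modes are in any case strongly damped by the weight $(1+m^2)^{-s}\lesssim\eps^{2s}$. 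To obtain the supremum over $[0,T]$ on all of these terms I would invoke the factorization method of Da~Prato--Zabczyk, which reduces the problem to integrals of the form $\int_0^T t^{-2\beta}\norm{(1-\del_x^2)^{-s/2}\Seps(t)Q_\eps}_\HS^2\,dt$ (and the analogues for the limiting equation and the difference), i.e.\ precisely to the Hilbert--Schmidt estimates underlying Proposition~\ref{prop.solution}.

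The last step is to sum the per-mode bounds against $(1+m^2)^{-s}$. Since there are $\sim\eps^{-1}$ low modes, the $\eps^\alpha$- and $\eps$-gains per mode compete with this count and with the $\eps^{-\gamma}$-type losses incurred in Lemmas~\ref{l.semigroup} and \ref{l.remainder} and in the interpolation inequalities; optimizing the free exponents $\gamma$ and $\beta$ against $s$ is what produces the threshold $s>s_\alpha$ and a strictly positive rate $\theta_0(s)$. I expect the main obstacle to be exactly this bookkeeping near $\eps|m|\approx1$: there a resonant wavenumber $k_{-1}=m-\eps^{-1}$ is only $O(1)$, so $\norm{q_{k_{-1}}}$ offers no help, and the accumulation of such near-resonant modes -- each weighted by $(1+m^2)^{-s}\approx\eps^{2s}$ -- is what forces $s$ above $\tfrac32(1-2\alpha)$; doing this while keeping uniform-in-$\eps$ control of the time supremum is the delicate part of the argument.
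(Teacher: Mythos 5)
Your overall strategy---mode-by-mode estimates, the resonance structure $k=m+l/\eps$, the expansion $\Seps^*(t)e_m=\rho^\eps e_m e^{-\mu m^2 t}+f_\eps^\BL+R_\eps$ with the bounds of Lemma~\ref{l.remainder}, and a soft a priori bound for the high modes---is the same as the paper's, and your explanation of why the $H^1$ bound on $\qbar_k$ is needed only for $\alpha\le 1/2$ is exactly right. Replacing the paper's Kolmogorov-criterion argument (Lemma~\ref{l.kolmogorov}) by the Da Prato--Zabczyk factorization method to control the time supremum is a legitimate alternative for that sub-step.

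There is, however, a genuine gap in where you place the cutoff between ``low'' and ``high'' modes. You fix it at $\eps|m|=1$, i.e.\ you use the semigroup expansion for \emph{all} $|m|<\eps^{-1}$. The remainder contribution per mode is of order $\eps^{4\alpha}m^{2+\delta}$ (Lemma~\ref{lem:claim3}; it comes from $\suptime\norm{R_\eps(t)}\lesssim\eps m$ traded against $\norm{R_\eps(t)}_{H^1}$), and summing this against $(1+m^2)^{-s}$ up to $|m|\sim\eps^{-1}$ produces $\eps^{4\alpha-(3-2s)}$, which is small only when $s>\tfrac32-2\alpha$. Your high-mode estimate only requires $s>1-2\alpha$, so the fixed cutoff proves the theorem only for $s>\tfrac32-2\alpha$, strictly short of the claimed threshold $s_\alpha=\tfrac32-3\alpha$ whenever $\alpha>0$. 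The missing idea is to make the cutoff itself a free parameter: use the expansion only for $|m|<\eps^{-\beta}$ with $\beta\in(0,1)$, and treat the entire range $|m|\ge\eps^{-\beta}$ (including $\eps^{-\beta}\le|m|<\eps^{-1}$, where the expansion is still valid but useless) by the crude bound $\eps^{2\beta s}\,\E\suptime\norm{u_\eps(t)}^2\lesssim\eps^{2\beta s+4\alpha-2-\delta}$ of Lemma~\ref{l.apriori}. The two competing constraints $(1-2\alpha)/s<\beta<4\alpha/(3-2s)$ admit a common $\beta\in(0,1)$ precisely when $s>s_\alpha$, which is where the stated threshold comes from; the exponents $\gamma$ and the factorization exponent you propose to optimize enter only the time-regularity bookkeeping and cannot recover this. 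Two smaller points: the lemma that trades $H^1$ regularity for decay in the Fourier index is Lemma~\ref{l.split}, not Lemma~\ref{lem:interpolation}; and both inequalities in Lemma~\ref{lem:interpolation} \emph{lose} a factor $\eps^{-2\gamma}$, so they cannot be used to ``gain negative powers of $\eps|m|$'' on the high modes---only the weight $(1+m^2)^{-s}$ and the a priori bound are available there.
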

%%%%%%%
%%%%%%%
\begin{rmk}
For the interested reader, the rate of decay $\theta_0$ given by our proof is
\[
\theta_0(s) = 2\alpha\wedge \frac{4}{3}(s-s_\alpha) \;.
\]
\end{rmk}
%%%%%%%
As stated in the introduction, the next theorem deals with the second order term of the solution $u_\eps$, obtained by subtracting the first order term (or in our case, setting $\innerprod{q_k,\rho}=0$) and scaling the noise up by some inverse factor of $\eps$. We have the following result. 
%%%%%%%%
\begin{thm}\label{thm:weaklimit}
Suppose $u_\eps$ satisfies \eqref{e:formulationSPDE} with $\innerprod{q_k,\rho}=0$ for all $k \in \integers$ and the conditions given in Assumptions \ref{ass:elliptic}, \ref{ass.weaknoise} hold true for a given $\alpha \in (0,1)$. 

Then, there exists a probability space with a sequence of Wiener processes $\{\tildeW_k\}$
and processes $\{\hat u_\eps\}$ that are equal in law to $\{u_\eps\}$,
such that
\begin{equation}
\label{e.weaklimit}
\lim_{\eps \to 0} \E \suptime \norm{\eps^{-\alpha}\tildeu_\eps(t) - v(t)}_{H^{-s}}^2 =0\;,
\end{equation}
where $v$ is the solution to
\begin{equation}\label{e.weaklimitSPDE}
dv(x,t) = \mu \del_x^2 v(x,t) dt + \norm{\qbar \rho}_{-\alpha}\sum_k e_k(x) d\tildeW_k(t)\;,  
\end{equation}
with $v(x,0)=0$.
The convergence \eref{e.weaklimit} holds for any $s>{3\over 2}\bigl(\alpha \vee (1-\alpha)\bigr)$.
\end{thm}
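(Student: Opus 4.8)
\textbf{Proof proposal for Theorem~\ref{thm:weaklimit}.}
The plan is to work entirely at the level of low Fourier modes $\innerprod{u_\eps(t),e_m}$ with $\eps|m|<1$, using the semigroup expansion $\Seps^*(t)e_m(x) = \rho^\eps(x)e_m(x)e^{-\mu m^2 t} + f_\eps^\BL(x,t) + R_\eps(x,t)$ developed in Section~\ref{ss.estimating}. Writing the mild solution as
\[
\innerprod{u_\eps(t),e_m} = \sum_k \int_0^t \innerprod{q_k^\eps e_k,\Seps^*(t-s)e_m}\,dW_k(s)\;,
\]
and substituting the expansion, the leading term $\innerprod{q_k^\eps e_k,\rho^\eps e_m}e^{-\mu m^2(t-s)}$ involves $\innerprod{q_k\rho,e_{m-k}}$ evaluated at the rescaled argument; because $\innerprod{q_k,\rho}=0$ by hypothesis, the ``diagonal'' $k=m$ contribution vanishes, so the surviving contributions come from the oscillatory pieces where $\eps(m-k)$ is a nonzero integer, i.e.\ $k = m - \eps^{-1}\ell$ for $\ell\in\integers\setminus\{0\}$. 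This is the source of the resonance. Each such term carries a factor $\norm{q_k}\lesssim|k|^{-\alpha}\sim(\eps^{-1}|\ell|)^{-\alpha} = \eps^\alpha|\ell|^{-\alpha}$, which is exactly why the rescaling $\eps^{-\alpha}$ is the right one and why the limiting coefficient is $\norm{\qbar\rho}_{-\alpha}$: the sum over $\ell\neq 0$ of $|\ell|^{-2\alpha}|\innerprod{\qbar\rho,e_{-\ell}}|^2$ (using $|k|^\alpha q_k\to\qbar$ and the continuity of $\rho$) produces precisely $\norm{\qbar\rho}_{-\alpha}^2$.

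The key steps, in order, would be: (i) Use Proposition~\ref{prop.solution} and Lemma~\ref{lem:interpolation} together with the a priori Hilbert--Schmidt bounds to establish that $\eps^{-\alpha}u_\eps$ is bounded in $L^2(P;C([0,T],H^{-s}))$ uniformly in $\eps$, for $s>\tfrac32(\alpha\vee(1-\alpha))$; this also controls the high modes $|m|\geq\eps^{-1}$, which should be shown to contribute negligibly to the $H^{-s}$ norm after the $\eps^{-\alpha}$ rescaling. (ii) For each fixed low mode $m$, compute $\mathrm{Var}\,\innerprod{\eps^{-\alpha}u_\eps(t),e_m}$ and its covariance structure using It\^o isometry, splitting $\Seps^*$ via \eqref{e:remainderexplicit}; show the boundary-layer and $R_\eps$ contributions vanish as $\eps\to0$ using Lemmas~\ref{l.boundarylayer} and \ref{l.remainder} (the $R_\eps$ term gives $\eps^{2-2\alpha}\to 0$ since $\alpha<1$, the boundary layer gives exponentially small contributions after time integration), and identify the limit of the main term as $\norm{\qbar\rho}_{-\alpha}^2\int_0^t e^{-2\mu m^2(t-s)}\,ds$, matching the variance of $\innerprod{v(t),e_m}$. (iii) Since the limit noise is white, the distinct modes become independent in the limit, so one must set up the change of probability space: for fixed $\eps$, define $\tildeW_m$ by orthonormalising (a Gram--Schmidt / unitary rotation of) the relevant linear combinations $\sum_\ell \norm{q_{m-\eps^{-1}\ell}}^{-1}\cdots W_{m-\eps^{-1}\ell}$ of the original Brownian motions so that $\innerprod{\tildeu_\eps(t),e_m}$ has exactly the form $\int_0^t(\text{coefficient})\,e^{-\mu m^2(t-s)}d\tildeW_m(s) + (\text{error})$; then pass to a further subsequence / use a diagonal argument to get a single probability space carrying the limiting $\{\tildeW_m\}$. (iv) Upgrade mode-wise $L^2$ convergence to convergence in $C([0,T],H^{-s})$ by a Kolmogorov-type / maximal-inequality argument in $t$ combined with the uniform tail bound from step (i), then deduce \eqref{e.weaklimit} for test functions $\vphi\in H^s$ by summing over $m$.

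The main obstacle I expect is step (iii): carefully constructing the rotated Brownian motions $\tildeW_m$ on a common probability space so that they are (exactly, for each $\eps$) independent across $m$ and adapted, while keeping the residual error in $L^2(P)$ uniformly small in $\eps$ and $m$ with a decay rate summable against $|m|^{-2s}$. The bookkeeping is delicate because the index sets $\{m-\eps^{-1}\ell : \ell\neq 0\}$ for different $m$ (with $|m|<\eps^{-1}$) are disjoint, which is what makes exact independence possible, but one must check that the finite-variance rotation is well-defined (the coefficient sequence is only $\ell^{-\alpha}$, hence square-summable precisely because $\alpha>0$) and that passing to the $\eps\to0$ limit of these rotations is consistent. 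A secondary technical point is the uniform-in-$m$ control of the error terms when $\eps|m|$ is close to $1$, where the constants in Lemmas~\ref{l.boundarylayer} and \ref{l.remainder} degrade; this is handled by the constraint $s>\tfrac32(\alpha\vee(1-\alpha))$, which provides enough room in the $H^{-s}$ norm to absorb the polynomial-in-$m$ losses.
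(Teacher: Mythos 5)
Your overall plan---expanding $\Seps^*(t)e_m$ via $\rho^\eps e_m e^{-\mu m^2 t} + f_\eps^\BL + R_\eps$, reducing to low Fourier modes, reading off the resonance structure at $k=m+l/\eps$ to identify $\norm{\qbar\rho}_{-\alpha}$, controlling the boundary-layer and remainder contributions via Lemmas~\ref{l.boundarylayer} and \ref{l.remainder}, and splitting low versus high modes with an a priori bound---is essentially the paper's. You have also correctly flagged step~(iii) as the crux. But the construction you propose there has a genuine gap: it cannot be completed as stated.

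In your step~(iii) you define, for each fixed $\eps$, a Wiener process $\tildeW_m^\eps$ as a unitary rotation of $\{W_{m+l/\eps}\}_{l\neq0}$, and then you want to ``pass to a further subsequence / diagonal argument to get a single probability space carrying the limiting $\{\tildeW_m\}$.'' There is no such limit. For distinct values of $\eps$ (with $\eps^{-1}\in\naturals$), the index sets $\{m+l/\eps\}_{l\neq0}$ overlap only at very high $|l|$, while the rotation weights are concentrated on small $|l|$ (they behave like $|l|^{-\alpha}$). Hence $\E\,\tildeW_m^\eps(1)\tildeW_m^{\eps'}(1)$ does \emph{not} tend to $1$ as $\eps,\eps'\to0$; the sequence is not Cauchy in $L^2(P)$ and converges along no subsequence. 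This is exactly why the convergence in the theorem is only ``disguised convergence in law,'' as the paper remarks just before its proof, and no diagonal or Skorokhod trick applied to the original probability space will produce a bona fide mean-square statement with $\tildeW_m^\eps$ replaced by a fixed $\tildeW_m$.

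The missing idea is to run the coupling \emph{backwards}. Fix once and for all a family $\{\tildeW_m\}$ of i.i.d.\ complex Wiener processes (with $\tildeW_{-m}=\tildeW_m^\star$) on a fresh probability space and let $v$ be the mild solution of \eqref{e.weaklimitSPDE} driven by them. Then, for each $\eps$, adjoin a second family $\{B_k^\eps\}$ of i.i.d.\ complex Wiener processes on the \emph{same} space, jointly Gaussian with $\{\tildeW_m\}$ and with the cross-covariance
\[
\E\,\tildeW_m(t)\,B_k^\eps(s)=\frac{\lambda_{\eps,m}^l}{\Lambda_{\eps,m}}\,(t\wedge s)\quad\text{if }k=m+l/\eps\;,\qquad 0\text{ otherwise,}
\]
where $\lambda_{\eps,m}^l=\eps^{-\alpha}\innerprod{q_{m+l/\eps}e_l,\rho}$ and $\Lambda_{\eps,m}^2=\sum_l|\lambda^l_{\eps,m}|^2$; this is consistent precisely because the index sets $\{m+l/\eps\}_l$ for $|m|<\eps^{-1}$ partition $\integers$. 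Define $\tildeu_\eps$ by prescribing its Fourier coefficients: for $|m|<\eps^{-\beta}$ take the leading term to be exactly $\eps^\alpha\Lambda_{\eps,m}\int_0^t e^{-\mu m^2(t-s)}d\tildeW_m(s)$ while the boundary-layer and remainder terms are the stochastic convolutions driven by $\{B_k^\eps\}$; for $|m|\ge\eps^{-\beta}$ take $\innerprod{\tildeu_\eps,e_m}=\innerprod{w_\eps,e_m}$ with $w_\eps$ solving \eqref{e:formulationSPDE} driven by $\{B_k^\eps\}$. Matching covariances shows $\tildeu_\eps\stackrel{\text{law}}{=}u_\eps$. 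The payoff is that the main-term error in mode $m$ of $\eps^{-\alpha}\tildeu_\eps - v$ is
\[
(\Lambda_{\eps,m}-\norm{\qbar\rho}_{-\alpha})\int_0^t e^{-\mu m^2(t-s)}d\tildeW_m(s)\;,
\]
whose prefactor is deterministic; so the mean-square estimate reduces to the numerical limit $\Lambda_{\eps,m}\to\norm{\qbar\rho}_{-\alpha}$ (dominated convergence, using Lemma~\ref{l.split} to control the tails of the $l$-sum), combined with the error bounds you already have. A secondary correction to your step~(i): the cutoff should be $|m|<\eps^{-\beta}$ for an adjustable $\beta\in(0,1)$, not $\eps^{-1}$; the threshold $s>\tfrac32(\alpha\vee(1-\alpha))$ is precisely what guarantees a valid $\beta$ balancing the low-mode estimate against the high-mode a priori bound from Lemma~\ref{l.apriori}.
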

The two preceding theorems always require some decay on the coefficients $q_k$, in particular the results do no treat SPDEs driven by space-time white noise, where $q_k=1$ for each $k\in \integers$. We know that in the space-time white noise case, the solution converges to the so-called wrong limit. The following result generalises this phenomena to a broad class of driving noise processes. 
\begin{thm}\label{thm.STWN}
Suppose $u_\eps$ satisfies \eqref{e:formulationSPDE} and that the conditions given in Assumption \ref{ass.elliptic}, \ref{ass.STWN} hold true. 
Then, there exists a probability space with a sequence of Wiener processes $\{\tildeW_k\}$
and processes $\{\hat u_\eps\}$ that are equal in law to $\{u_\eps\}$,
such that
\begin{equation}
\label{e.STWNlimit}
\lim_{\eps \to 0} \E \suptime \norm{\hatu_\eps(t) - \hatu(t)}_{H^{-s}}^2 =0\;,
\end{equation}
where $\hatu$ satisfies the stochastic heat equation
\begin{equation}\label{e.STWNlimitSPDE}
d\hatu(x,t) = \mu \del_x^2 \hatu(x,t) dt + \sum_{k} (|\innerprod{q_k,\rho}|^2 - |\innerprod{\qbar,\rho}|^2 + \norm{\qbar \rho}^2)^{1/2} e_k(x) d\hatW_k(t)\;,
\end{equation}
with $\hatu(x,0) = 0$.
The convergence \eref{e.STWNlimit} holds for any $s>s_\eta$, where 
\[
s_\eta = 
\begin{cases}
1 , 	&\text{if $\eta \in [0,1/2]$ }\;,\\
\frac{3}{2}(2-\eta)^{-1} , 	&\text{if $\eta \in [1/2,1)$ }\;.\\
\end{cases}
\]
(Here, $\eta$ is the constant appearing in Assumption~\ref{ass.STWN}.)
\end{thm}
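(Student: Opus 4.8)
The plan is to argue mode by mode. Since $\norm{f}_{H^{-s}}^2=\sum_{m\in\integers}(1+m^2)^{-s}|\innerprod{f,e_m}|^2$, it suffices to bound $\E\suptime|\innerprod{\hatu_\eps(t)-\hatu(t),e_m}|^2$ for each $m$ and sum the bounds against $(1+m^2)^{-s}$. I would fix a cut-off exponent $\kappa\in(0,1)$ and split at $|m|=\eps^{-\kappa}$. For the high modes $|m|>\eps^{-\kappa}$ the semigroup approximation of Section~\ref{ss.estimating} is unavailable, so instead I use the a priori Hilbert--Schmidt bound of Lemma~\ref{lem.apriori2} (the analogue under Assumption~\ref{ass.STWN} of the estimate quoted in the proof of Proposition~\ref{prop.solution}): combined with the interpolation inequality \eqref{e.interpolationlemma1} of Lemma~\ref{lem:interpolation} it controls $\int_0^T\norm{(1-\del_x^2)^{-s/2}\Seps(r)Q_\eps}_\HS^2\,dr$, for $\gamma$ in a range constrained by the parameter $\eta$, and this tail goes to zero once $|m|$ is restricted to $|m|>\eps^{-\kappa}$ and $s$ is large enough; the corresponding tail for $\hatu$ is immediate since the coefficients $c_m=(|\innerprod{q_m,\rho}|^2-|\innerprod{\qbar,\rho}|^2+\norm{\qbar\rho}^2)^{1/2}$ of \eqref{e.STWNlimitSPDE} are uniformly bounded. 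Balancing the $\eps$-loss in this a priori estimate against the number $\eps^{-\kappa}$ of low modes is exactly what pins down the threshold $s_\eta$.

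For the low modes $|m|\le\eps^{-\kappa}$ I would write, using \eqref{e.mild} against the adjoint semigroup, $\innerprod{u_\eps(t),e_m}=\sum_k\int_0^t\innerprod{q_k^\eps e_k,\Seps^*(t-s)e_m}\,dW_k(s)$, and insert the expansion $\Seps^*(t)e_m=\rho^\eps e_m\,e^{-\mu m^2 t}+f_\eps^\BL(t)+R_\eps(t)$ of Lemma~\ref{l.remainder}. The contributions of $f_\eps^\BL$ and $R_\eps$ are error terms: using that $\sum_k|\innerprod{q_k^\eps e_k,g}|^2=\norm{Q_\eps^\star g}^2\lesssim\norm{g}^2$ (boundedness on $\Ltwopi$ of the operator $Q_\eps$ of \eqref{e:Qeps}, which follows from $\sup_k\norm{q_k}_{H^1}<\infty$) together with standard stochastic--convolution estimates, the $f_\eps^\BL$-contribution to $\E\suptime|\innerprod{u_\eps(t),e_m}|^2$ is $\lesssim\int_0^T\norm{f_\eps^\BL(r)}^2\,dr\lesssim\eps^2$ by Lemma~\ref{l.boundarylayer}, and the $R_\eps$-contribution is $\lesssim\int_0^T\norm{R_\eps(r)}^2\,dr\lesssim\eps\,(1\vee|m|)^2$ by the bounds $\suptime\norm{R_\eps(t)}\lesssim\eps(1\vee|m|)$ and $\int_0^T\norm{R_\eps(r)}_{H^1}\,dr\lesssim(1\vee|m|)$ of Lemma~\ref{l.remainder}. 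Summed against $(1+m^2)^{-s}$ over $|m|\le\eps^{-\kappa}$ these tend to zero for $s$ and $\kappa$ in the admissible range.

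It remains to handle the leading term $\sum_k\int_0^t\innerprod{q_k^\eps\rho^\eps,e_{m-k}}\,e^{-\mu m^2(t-s)}\,dW_k(s)$. The algebraic point is that $q_k^\eps\rho^\eps$ is $2\pi\eps$-periodic, so $\innerprod{q_k^\eps\rho^\eps,e_{m-k}}$ vanishes unless $k\equiv m\pmod{\eps^{-1}}$; writing $k=m-n\eps^{-1}$ it equals $\innerprod{q_{m-n/\eps}\rho,e_n}$, and the leading term becomes $\int_0^t e^{-\mu m^2(t-s)}\,d\bigl(\sum_{n}\innerprod{q_{m-n/\eps}\rho,e_n}W_{m-n/\eps}(s)\bigr)$. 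Diagonalising this Gaussian stochastic integral, it equals $\sigma_{\eps,m}\int_0^t e^{-\mu m^2(t-s)}\,d\tilde W_{\eps,m}(s)$ with $\sigma_{\eps,m}^2=\sum_{n}|\innerprod{q_{m-n/\eps}\rho,e_n}|^2$ and $\tilde W_{\eps,m}:=\sigma_{\eps,m}^{-1}\sum_n\overline{\innerprod{q_{m-n/\eps}\rho,e_n}}\,W_{m-n/\eps}$ a Brownian motion satisfying $\tilde W_{\eps,-m}=\tilde W_{\eps,m}^\star$ (from $q_{-k}=q_k$ and $W_{-k}=W_k^\star$); since this diagonalisation acts separately within each residue class mod $\eps^{-1}$ and distinct indices with $|m|\le\eps^{-\kappa}$ lie in distinct classes, the $\{\tilde W_{\eps,m}\}$ form jointly an i.i.d.\ family. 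Because applying $Q_\eps$ to a cylindrical white noise rotated by a unitary that respects the conjugation symmetry does not change the law of the solution to \eqref{e:formulationSPDE}, I may realise $u_\eps$ on a new probability space carrying one fixed i.i.d.\ family $\{\hatW_m\}$: for each $\eps$ I compose with the unitary inverting the above diagonalisation, obtaining $\hatu_\eps$ equal in law to $u_\eps$ whose $m$-th mode has leading part exactly $\sigma_{\eps,m}\int_0^t e^{-\mu m^2(t-s)}\,d\hatW_m(s)$, to be compared with $\innerprod{\hatu(t),e_m}=c_m\int_0^t e^{-\mu m^2(t-s)}\,d\hatW_m(s)$.

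The final ingredient is $\sigma_{\eps,m}\to c_m$. The $n=0$ term of $\sigma_{\eps,m}^2$ is exactly $|\innerprod{q_m,\rho}|^2$; for $n\neq0$, writing $q_{m-n/\eps}\rho=\qbar\rho+(q_{m-n/\eps}-\qbar)\rho$ and using $|\innerprod{g,e_n}|\le(1+n^2)^{-1/2}\norm{g}_{H^1}$ together with $\norm{\rho}_\infty<\infty$, one bounds $\bigl|\sum_{n\neq0}|\innerprod{q_{m-n/\eps}\rho,e_n}|^2-\sum_{n\neq0}|\innerprod{\qbar\rho,e_n}|^2\bigr|$ by a uniformly bounded factor times $\bigl(\sum_{n\neq0}(1+n^2)^{-1}\norm{q_{m-n/\eps}-\qbar}_{H^1}^2\bigr)^{1/2}$; since for $|m|\le\eps^{-\kappa}$ the indices $m-n\eps^{-1}$ with $n\neq0$ eventually lie outside any fixed bounded set as $\eps\to0$, the weighted summability in Assumption~\ref{ass.STWN} forces this to zero, and using $\sum_{n\neq0}|\innerprod{\qbar\rho,e_n}|^2=\norm{\qbar\rho}^2-|\innerprod{\qbar,\rho}|^2$ we obtain $\sigma_{\eps,m}^2\to c_m^2$. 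Consequently $\E\suptime|\innerprod{\hatu_\eps(t)-\hatu(t),e_m}|^2\lesssim|\sigma_{\eps,m}-c_m|^2\,(1\wedge|m|^{-2})+(\text{the }f_\eps^\BL\text{ and }R_\eps\text{ errors})$, and summing against $(1+m^2)^{-s}$ over $|m|\le\eps^{-\kappa}$ and adding the high-mode tail yields \eqref{e.STWNlimit}. I expect this last step to be the main obstacle: when $\eta>0$ the per-mode discrepancies $|\sigma_{\eps,m}-c_m|$ need not vanish uniformly in $m$, so one must control their $(1+m^2)^{-s}$-weighted sum while keeping the high-mode a priori tail small, and it is precisely the optimisation of the cut-off exponent $\kappa$ against these two competing constraints that produces the value of $s_\eta$ stated in the theorem.
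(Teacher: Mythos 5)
Your overall strategy matches the paper's: split the $H^{-s}$ norm into low modes $|m|<\eps^{-\kappa}$ and high modes, approximate $\Seps^*(t)e_m$ on the low modes by $\rho^\eps e_m\,e^{-\mu m^2 t}+f_\eps^\BL+R_\eps$, diagonalise the leading term into an OU process with variance $\sigma_{\eps,m}^2=\Lambda_{\eps,m}^2$, realise $u_\eps$ on a new probability space via the corresponding correlations so that the leading part is $\Lambda_{\eps,m}\int_0^t e^{-\mu m^2(t-s)}\,d\hatW_m(s)$, show $\Lambda_{\eps,m}\to c_m$ by splitting $q_k=\qbar+(q_k-\qbar)$ and using the weighted summability of Assumption~\ref{ass.STWN}, and balance against the a priori bound of Lemma~\ref{lem.apriori2}. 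This is precisely the paper's architecture.

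However, there is a genuine gap in your treatment of the $f_\eps^\BL$ and $R_\eps$ error terms. You invoke the bound $\sum_k|\innerprod{q_k^\eps e_k,g}|^2=\norm{Q_\eps^*g}^2\lesssim\norm{g}^2$, which is correct \emph{provided} $\sup_k\norm{q_k}_{H^1}<\infty$ (one proves $\norm{Q_\eps^*g}^2\le\sum_k\norm{q_k}_{H^1}^2\sum_j(1+j^2)^{-1}|\hat g(k+j/\eps)|^2$ by Cauchy--Schwarz and sums). But Assumption~\ref{ass.STWN} does \emph{not} give $\sup_k\norm{q_k}_{H^1}<\infty$ when $\eta>0$: for instance, $q_k=c_k\psi$ with $\psi\in H^1$ fixed and $|c_k|=|k|^{\eta/2}/(1+\log|k|)$ satisfies $\sum_k(1\wedge|k|^{-\eta})\norm{q_k-0}_{H^1}^2<\infty$ while $\norm{q_k}_{H^1}\to\infty$. (The paper's remark that uniform boundedness of $\norm{q_k}_{H^1}$ ``is implied by the listed assumptions'' is misleading; the paper's proof never uses it.) Without uniform boundedness, $\norm{Q_\eps}$ can grow like $\eps^{-\eta/2}$, so your claimed error bounds $\lesssim\eps^2$ for the $f_\eps^\BL$ contribution and $\lesssim\eps(1\vee|m|)^2$ for the $R_\eps$ contribution do not follow. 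The paper instead always splits $q_k=\qbar+(q_k-\qbar)$: the $\qbar$ part is handled by $\sum_k|\innerprod{e_ke_{-m},f}|^2=\norm{f}^2$, and the $(q_k-\qbar)$ part is fed into Lemma~\ref{l.split} with $2\nu=\eta$, trading $H^1$ regularity for decay in $|k-m|$ and paying an $\eps^{-\eta}$ factor. This produces the $\eta$-dependent estimates $\eps^{2-\eta-2\delta}|m|^\eta$ (claim~\eqref{e.thm3claim1}) and $\eps^{2-2\eta-3\delta}|m|^{2+\delta}$ (claim~\eqref{e.thm3claim2}), which are exactly what drives the constraint $\beta<\tfrac{2-2\eta}{3-2s}$ and the threshold $s_\eta$.

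Your own conclusion should have flagged this: with $\eta$-independent bounds $\eps^2$ and $\eps|m|^2$ you would obtain $s_\eta=1$ for every $\eta\in[0,1)$, which contradicts the $\eta$-dependent threshold in the theorem statement (and which you cannot actually achieve). A second, smaller point: ``standard stochastic-convolution estimates'' do not by themselves deliver the supremum in time; the paper passes through the Kolmogorov criterion (Lemma~\ref{l.kolmogorov}), which requires a H\"older-in-$t$ bound $\E|\phi(t)-\phi(s)|^2\lesssim K_\delta(\eps)|t-s|^\delta$ rather than merely $\int_0^T$-type bounds, and this is the source of the extra $\delta$ losses in the exponents. Your treatment of the leading term, the coupling via the correlation structure, and the convergence $\Lambda_{\eps,m}\to c_m$ are sound and coincide with the paper's.
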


\begin{rmk}
If one assumes that the driving noise does not depend on $\eps$, as is the case for space-time white noise, then the assumptions can be loosened. In particular, one can easily modify the proof of Theorem \ref{thm:stronglimit} to show the following. Suppose $u_\eps$ satisfies \eqref{e:formulationSPDE} with $q_k$ constants and that $u$ satisfies \eqref{e.stronglimitSPDE}, then 
\[
\lim_{\eps \to 0} \E \suptime \norm{u_\eps(t) - u(t)}_{H^{-s}}^2 =0\;,
\]
for $s$ large enough. Hence, we can still prove the limit, but at the expense of the rate of convergence. A similar result holds for \ref{thm.STWN}, in the case of constant $q_k$, in that we can weaken the assumption to just $q_k \to \qbar$, and still prove the limit \eqref{e.STWNlimit}. 
\end{rmk}

One might ask what happens if we approximate the noise by a smoother infinite dimensional Gaussian process, say $W_{\eps}$, which, for nonzero $\eps$ falls into the class of the classical (unsurprising) case, but as $\eps$ tends to zero, approaches something as irregular as space-time white noise, for instance. To this end, let $\varphi$ be a smooth test function on $\reals$ with compact support and $\varphi(0)=1$. We define the \emph{smoothened version} of \eqref{e:formulationSPDE} by
\[
du_{\eps}(t) = \gen_{\eps} u_{\eps}(t) dt + \sum_{k} \varphi(\eps k) q_{k}(x/\eps)e_{k}dW_{k}(t) \;.
\]
This smoothening procedure consists in taking the convolution of the noise with a scaled version of
the function $\tilde{\varphi}$, where $\tilde{\varphi}$ is the inverse Fourier transform of $\varphi$. The following corollary illustrates the transition between the classical case and the unsurprising case. 
\begin{corr}\label{corr:convolution}
Suppose $u_\eps$ satisfies the smoothened version of \eqref{e:formulationSPDE}, as defined above and that Assumptions \ref{ass.elliptic}, \ref{ass.STWN} hold true. Suppose furthermore that 
\begin{equation}\label{e:smoothlimit}
d\hatu(t) = \mu \del_{x}^{2} \hatu(t) dt + \sum_{k} \left( |\innerprod{q_k,\rho}|^2 - |\innerprod{\qbar,\rho}|^2 + \norm{(\qbar \rho) \star \tilde{\varphi}}^2 \right)^{1/2}  e_{k}d\hatW_{k}(t)
\end{equation}
Then $u_{\eps} \to \hat u$ in precisely the same sense as claimed in Theorem \ref{thm.STWN}.
\end{corr}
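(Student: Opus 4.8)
\emph{Proof (sketch).} The plan is to observe that the smoothened equation is nothing but \eqref{e:formulationSPDE} with the coefficients $q_k$ replaced by the $\eps$-dependent coefficients $q_k^{(\eps)} := \varphi(\eps k)\,q_k$, and then to rerun the proof of Theorem~\ref{thm.STWN} for these coefficients, carrying the extra factor $\varphi(\eps k)$ through every estimate. Only two features of $\varphi$ are used: $\varphi(0)=1$, so that $\varphi(\eps k)\to1$ for each fixed $k$ as $\eps\to0$; and $\varphi$ is supported in some interval $[-M,M]$, so that $q_k^{(\eps)}=0$ whenever $|k|>M/\eps$. The latter makes the driving noise finite-dimensional for each fixed $\eps$ (so that the regularity statement of Proposition~\ref{prop.solution} is automatic) and, more importantly, truncates the sum over resonant wavenumbers to a set whose cardinality is bounded uniformly in $\eps$.

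First I would redo the low-mode analysis of Theorem~\ref{thm.STWN} verbatim: write $u_\eps$ in mild (adjoint) form, insert the approximation $\Seps^*(t)e_m = \rho^\eps e_m\,e^{-\mu m^2 t} + f_\eps^\BL + R_\eps$ supplied by Section~\ref{ss.estimating}, and reduce $\innerprod{u_\eps(t),e_m}$ (for $\eps|m|<1$) to a sum of stochastic integrals. The leading integrand picks up the factor $\varphi(\eps k)$ through $\varphi(\eps k)\innerprod{q_k^\eps e_k,\rho^\eps e_m}$, and $\innerprod{q_k^\eps e_k,\rho^\eps e_m}$ vanishes unless $k-m\in\eps^{-1}\integers$, in which case it equals $\widehat{(q_k\rho)}_{\eps(m-k)}$. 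Since $\varphi$ is supported in $[-M,M]$ and $\eps|m|<1$, the surviving wavenumbers $k=m-n/\eps$ satisfy $|n|\le M+1$, so for every fixed $m$ at most $2M+3$ resonances contribute, uniformly in $\eps$; this pins down the structure of the limit and renders the convergence of the resonance sum elementary, whereas in Theorem~\ref{thm.STWN} the analogous sum over $n$ is infinite and must be tamed with Assumption~\ref{ass.STWN}. The a priori and tail-in-$m$ bounds needed to make the $H^{-s}$ norm converge are obtained exactly as in Theorem~\ref{thm.STWN}: the factor $\varphi(\eps k)$ is absorbed using $|\varphi(\eps k)|\le\norm{\varphi}_\infty$, the truncation $|k|\le M/\eps$ only removes modes, and the boundary-layer and remainder terms are controlled by Lemmas~\ref{l.boundarylayer} and~\ref{l.remainder} as before; the resulting range of exponents is again $s>s_\eta$.

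Next I would identify the limit. For fixed $m$, the above writes $\innerprod{u_\eps(t),e_m}$, modulo errors tending to zero, as a sum of stochastic integrals against the \emph{distinct}, hence independent, Brownian motions $W_{m-n/\eps}$, $|n|\le M+1$, with integrands converging as $\eps\to0$ to $e^{-\mu m^2(t-s)}\varphi(-n)\,\widehat{(\qbar\rho)}_n$ for $n\ne0$ (using $q_{m-n/\eps}\to\qbar$ and $\varphi(\eps m-n)\to\varphi(-n)$) and to $e^{-\mu m^2(t-s)}\,\innerprod{q_m,\rho}$ for $n=0$ (using $\varphi(\eps m)\to\varphi(0)=1$). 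Hence the square of the coefficient of the limiting mode-$m$ noise is $|\innerprod{q_m,\rho}|^2+\sum_{n\ne0}|\varphi(-n)|^2\,|\widehat{(\qbar\rho)}_n|^2$, which I would rewrite, using $\widehat{((\qbar\rho)\star\tilde\varphi)}_n=\varphi(n)\widehat{(\qbar\rho)}_n$, $\varphi(0)=1$, $\widehat{(\qbar\rho)}_0=\innerprod{\qbar,\rho}$ and the symmetry $n\leftrightarrow-n$ (valid since $\qbar\rho$ is real), as $|\innerprod{q_m,\rho}|^2-|\innerprod{\qbar,\rho}|^2+\norm{(\qbar\rho)\star\tilde\varphi}^2$, which is precisely the coefficient in \eqref{e:smoothlimit}; the cross-mode covariances vanish in the limit for the same reason as in Theorem~\ref{thm.STWN}. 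Just as in Theorems~\ref{thm:weaklimit} and~\ref{thm.STWN}, the limiting Brownian motions $\hatW_m$ are recovered as limits of linear combinations of the resonant $W_{m-n/\eps}$, so the limit $\hatu$ (together with a law-equivalent copy $\hatu_\eps$ of $u_\eps$) must be realised on a new probability space; on that space, term-by-term convergence of the low modes plus the uniform tail bound yield \eqref{e.STWNlimit} in $H^{-s}$ for $s>s_\eta$.

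The step I expect to need the most care is the bookkeeping one --- tracking the factor $\varphi(\eps k)$ through the a priori estimates and checking that the boundary layer $f_\eps^\BL$, the remainder $R_\eps$ and the high-mode tail remain uniformly negligible once the $k$-sum is truncated to $|k|\le M/\eps$; but, as indicated, every estimate of Section~\ref{ss.estimating} uses the coefficients only through their boundedness, and truncating the noise only helps, so the bounds of Theorem~\ref{thm.STWN} carry over. The only genuinely new (and entirely elementary) point is the Fourier identity $\sum_{n\ne0}|\varphi(n)|^2|\widehat{(\qbar\rho)}_n|^2=\norm{(\qbar\rho)\star\tilde\varphi}^2-|\innerprod{\qbar,\rho}|^2$ that converts the resonance sum into the constant appearing in \eqref{e:smoothlimit}.
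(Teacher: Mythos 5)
Your proposal is correct and takes essentially the same route as the paper: one reruns Theorem~\ref{thm.STWN} with $\lambda_{\eps,m}^l=\varphi(\eps m+l)\innerprod{q_{m+l/\eps}e_l,\rho}$, absorbs the factor $\varphi$ into every estimate via its boundedness, and identifies the limiting coefficient through the Parseval identity $\norm{(\qbar\rho)\star\tilde\varphi}^2=\sum_l|\varphi(l)|^2|\innerprod{\qbar\rho,e_l}|^2$ together with $\varphi(0)=1$. Your additional use of the compact support of $\varphi$ to truncate the resonance sum to finitely many terms is a valid simplification that the paper does not need, since boundedness of $\varphi$ already lets one pass the limit inside the sum exactly as in Theorem~\ref{thm.STWN}.
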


\begin{remark}
If we take $\tilde \varphi = 1$, then we recover Theorem~\ref{thm.stronglimit}. If on the other hand, we take $\varphi = 1$ (so that $\tilde \vphi = \delta$), then
we recover Theorem~\ref{thm.STWN}, so that we can view this corollary as an interpolation between the two theorems.
\end{remark}

The proof of Corollary~\ref{corr:convolution} is given on page~\pageref{e:corr1} below.
Before proving these results, we need a few specialised lemmas. The first technical lemma that we require will essentially provide us with a bound on the norm of the multiplication operator from $H^{-s}$ to $H^{-s}$, where the multiplier function is highly oscillatory. 
\begin{lemma}\label{lem:qksobolev}
For any $f \in H^1$ we have that 
\begin{equation}
\norm{(1-\del_x^2)^{-s/2}f^\eps(1-\del_x^2)^{s/2}}_{L^2 \to L^2} \lesssim \eps^{-s} \norm{f}_{H^1}\;,
\end{equation}
where $f^\eps(x)=f(x/\eps)$ denotes the corresponding multiplication operator.
\end{lemma}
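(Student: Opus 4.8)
We want to bound the operator norm on $L^2$ of $A_\eps := (1-\del_x^2)^{-s/2} M_{f^\eps} (1-\del_x^2)^{s/2}$, where $M_{f^\eps}$ is multiplication by $f(x/\eps)$. The natural approach is to work in Fourier space, where multiplication by $f^\eps$ becomes convolution against the Fourier coefficients of $f^\eps$. Since $f$ is $2\pi$-periodic, $f^\eps(x) = f(x/\eps)$ has Fourier series supported on multiples of $1/\eps$: writing $f = \sum_{j} \fhat_j e_j$ (on the torus of period $2\pi$, so in the variable $y=x/\eps$), one has $f^\eps = \sum_j \fhat_j e_{j/\eps}$, using that $\eps^{-1}\in\naturals$ so that $e_{j/\eps}$ is still a legitimate Fourier mode on $[0,2\pi]$. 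Hence the matrix of $M_{f^\eps}$ in the basis $\{e_m\}$ has $(m,n)$-entry equal to $\fhat_{(m-n)\eps}$ if $(m-n)\eps \in \integers$, i.e. if $m-n$ is a multiple of $1/\eps$, and zero otherwise.

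First I would write out the matrix entries of $A_\eps$ explicitly. Using $\|(1-\del_x^2)^{s/2} e_m\| = (1+m^2)^{s/2} =: \langle m\rangle^s$, the $(m,n)$-entry of $A_\eps$ is
\[
(A_\eps)_{mn} = \langle m\rangle^{-s}\,\fhat_{(m-n)\eps}\,\langle n\rangle^{s}\,\mathbf 1_{\{(m-n)\eps\in\integers\}}\;.
\]
Then I would apply the Schur test: to bound $\|A_\eps\|_{L^2\to L^2}$ it suffices to bound $\sup_m \sum_n |(A_\eps)_{mn}|$ and $\sup_n \sum_m |(A_\eps)_{mn}|$ (the operator norm is bounded by the geometric mean, hence by the max, of the two). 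The key elementary inequality is that when $m-n$ is a large multiple of $1/\eps$, the ratio $\langle n\rangle/\langle m\rangle$ is controlled: writing $m - n = \ell/\eps$ for $\ell\in\integers\setminus\{0\}$, one has $\langle n\rangle \lesssim \langle m\rangle + |\ell|/\eps$, so $\langle n\rangle^s/\langle m\rangle^s \lesssim 1 + (|\ell|/(\eps\langle m\rangle))^s \lesssim \eps^{-s}(1+|\ell|)^s$ (for $s\ge 0$; here I use $\langle m\rangle\ge 1$). The $\ell=0$ term just contributes $|\fhat_0| \le \|f\|_{H^1}$. Summing over $n$ (equivalently over $\ell$) therefore gives
\[
\sum_n |(A_\eps)_{mn}| \;\lesssim\; \eps^{-s}\sum_{\ell\in\integers} (1+|\ell|)^s\,|\fhat_\ell|\;.
\]
The remaining point is to bound $\sum_\ell (1+|\ell|)^s |\fhat_\ell|$ by $\|f\|_{H^1}$ — but that is false for general $s$! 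This is the subtlety I'd need to handle carefully: the statement only claims $\lesssim \eps^{-s}\|f\|_{H^1}$, so the $(1+|\ell|)^s$ weight must not actually appear. The resolution is that $(m-n)\eps\in\integers$ forces $|m-n|\ge 1/\eps$ whenever $m\ne n$, so in fact one should \emph{not} extract $\eps^{-s}(1+|\ell|)^s$ but rather keep $\langle n\rangle^s/\langle m\rangle^s \lesssim (1+|\ell|)^s$ times nothing — wait, more precisely: since $|m-n|$ is \emph{already} at least $1/\eps$, we have $\langle n\rangle \le \langle m\rangle + |m-n| = \langle m\rangle(1 + |m-n|/\langle m\rangle)$ and also $|m-n| = |\ell|/\eps \ge |\ell|$, so $\langle n\rangle^s/\langle m\rangle^s \lesssim \langle m - n\rangle^s \lesssim \eps^{-s}|\ell|^s$ is the wrong bookkeeping — the honest bound is $\langle n\rangle^s \lesssim \langle m\rangle^s + \langle m-n\rangle^s$, giving the ratio $\lesssim 1 + \langle m-n\rangle^s/\langle m\rangle^s$. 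Summing, the "$1$" term yields $\sum_\ell |\fhat_\ell| \lesssim \|f\|_{H^1}$ by Cauchy–Schwarz (since $\sum_\ell \langle \ell\rangle^{-2} <\infty$ and $H^1$ controls $\sum \langle\ell\rangle^2|\fhat_\ell|^2$, and here the relevant weight on $\fhat_\ell$ is $\langle \ell\rangle^{1}$, comfortably $\ge 1$), and for the second term I use $\langle m-n\rangle = \langle \ell/\eps\rangle \le \eps^{-1}\langle \ell\rangle$ (valid since $\eps\le 1$), hence $\langle m-n\rangle^s/\langle m\rangle^s \le \eps^{-s}\langle\ell\rangle^s \cdot \langle m\rangle^{-s} \le \eps^{-s}\langle\ell\rangle^s$, and then I need $\sum_\ell \langle\ell\rangle^s |\fhat_\ell| \lesssim \|f\|_{H^1}$, which again requires $s\le$ something. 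So the genuinely clean argument must instead split into the cases $s\le 1$ (where $\sum_\ell \langle\ell\rangle^s|\fhat_\ell|\le \sum_\ell \langle\ell\rangle|\fhat_\ell| \lesssim \|f\|_{H^1}$ directly) and, for larger $s$, exploit the Sobolev-algebra / Kato–Ponce structure.

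\textbf{The main obstacle.} The heart of the matter — and the step I expect to require the most care — is precisely this commutator-type estimate: bounding $\sum_\ell \langle\ell\rangle^s|\fhat_\ell|$ (or the appropriate refinement thereof) by $C\|f\|_{H^1}$ uniformly, which is \emph{not} true for $s>1$ without losing regularity. The correct fix is the standard Kato–Ponce commutator estimate: $\|[(1-\del_x^2)^{-s/2}, M_{f^\eps}](1-\del_x^2)^{s/2}\|_{L^2\to L^2} \lesssim \|\del_x f^\eps\|_{\infty} \lesssim \eps^{-1}\|f'\|_\infty \lesssim \eps^{-1}\|f\|_{H^1}$ — but this only gives one power of $\eps^{-1}$, not $\eps^{-s}$, so for $s>1$ it is in fact \emph{better} than claimed, while for $s\le 1$ one uses it directly together with $\|(1-\del_x^2)^{-s/2}M_{f^\eps}(1-\del_x^2)^{s/2}\| \le \|M_{f^\eps}\| + \|[\,\cdot\,,\cdot\,]\|$. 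So I would structure the proof as: (i) reduce to the Schur/Fourier picture above; (ii) observe $\|M_{f^\eps}\|_{L^2\to L^2} = \|f\|_\infty \lesssim \|f\|_{H^1}$; (iii) estimate the commutator $[(1-\del_x^2)^{-s/2}, M_{f^\eps}](1-\del_x^2)^{s/2}$ via its Fourier matrix entries $\bigl(\langle m\rangle^{-s} - \langle n\rangle^{-s}\bigr)\langle n\rangle^s \fhat_{(m-n)\eps}$, bounding $|\langle m\rangle^{-s} - \langle n\rangle^{-s}|\langle n\rangle^s$ by $\lesssim \langle m-n\rangle^{s}\langle m\rangle^{-s}\wedge 1$ and then, crucially, using $|m-n|\ge 1/\eps$ and the mean value theorem to get a factor $|m-n|/\langle m\rangle \le \eps^{-1}$-type control so that the surviving weight on $\fhat_\ell$ is only $\langle\ell\rangle$, matching $H^1$; and (iv) run the Schur test on the resulting matrix, collecting the factor $\eps^{-s}$ from the worst case where $m$ is small and $n\sim 1/\eps$. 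I'd expect step (iii)/(iv) to be where all the real work is, and I would be careful to verify that the $\eps^{-s}$ loss is genuinely attained (it is: take $f$ with $\fhat_1\ne 0$, $m=0$, $n = 1/\eps$) so that the estimate is sharp and no cleverness can remove it.
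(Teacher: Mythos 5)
You never actually close the argument, and both of the routes you sketch have genuine defects, so there is a real gap here.

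The unweighted Schur test, as you correctly diagnose halfway through, reduces to controlling a row sum of the form $\eps^{-s}\sum_{\ell}\langle\ell\rangle^{s}|\fhat_\ell|$. By Cauchy--Schwarz against the $H^1$ weight, $\sum_{\ell}\langle\ell\rangle^{s}|\fhat_\ell|\le\bigl(\sum_\ell\langle\ell\rangle^{2s-2}\bigr)^{1/2}\norm{f}_{H^1}$, which converges only for $s<1/2$. Since the lemma is applied in the paper (Lemma~\ref{lem:apriori}) with $s=\nu$ running over a range of $[0,1)$, this is not good enough. Your fallback to the Kato--Ponce commutator estimate does not repair this: first, $\norm{\del_x f^\eps}_\infty \lesssim \eps^{-1}\norm{f'}_\infty$ is \emph{not} bounded by $\eps^{-1}\norm{f}_{H^1}$ (in one dimension $H^1\hookrightarrow L^\infty$ but $H^1\not\hookrightarrow W^{1,\infty}$; controlling $\norm{f'}_\infty$ costs roughly $H^{3/2+}$); second, even granting that, you would only obtain $\eps^{-1}$, which for $s<1$ is a \emph{weaker} bound than the claimed $\eps^{-s}$, so it does not prove the lemma in the range that matters. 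Finally, the explicit matrix bound you propose in step (iii) is simply false: taking $m=0$ and $n=\ell/\eps$ with $\ell\ne 0$ gives $|\langle 0\rangle^{-s}-\langle n\rangle^{-s}|\langle n\rangle^{s}\approx\langle\ell/\eps\rangle^{s}\gg 1$, whereas your proposed upper bound $\langle m-n\rangle^{s}\langle m\rangle^{-s}\wedge 1$ evaluates to $1$.

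The paper's proof is far shorter and avoids all of this: it establishes the two \emph{endpoint} estimates $\norm{f^\eps u}\lesssim\norm{f}_\infty\norm{u}\lesssim\norm{f}_{H^1}\norm{u}$ (Sobolev embedding, $s=0$) and $\norm{f^\eps u}_{H^{-1}}\lesssim\norm{f^\eps}_{H^1}\norm{u}_{H^{-1}}\lesssim\eps^{-1}\norm{f}_{H^1}\norm{u}_{H^{-1}}$ (duality plus the fact that $H^1$ is a Banach algebra on the torus, and $\norm{f^\eps}_{H^1}\lesssim\eps^{-1}\norm{f}_{H^1}$), and then invokes the Cald\'eron--Lions interpolation theorem to get the intermediate exponent with the interpolated constant $\eps^{-s}\norm{f}_{H^1}$. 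The moral is that the factor $\eps^{-s}$ is not something you should try to extract mode-by-mode at intermediate $s$; it appears automatically once you interpolate between two easy endpoint bounds.
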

\begin{proof}
We will equivalently prove that 
\[
\norm{f^\eps u}_{H^{-s}} \lesssim \eps^{-s} \norm{u}_{H^{-s}} \norm{f}_{H^1}\;, 
\]
this is done once more using Cald\'eron-Lions interpolation theorem \cite{reedsimon_fourier}. For $s=0$, the claim holds simply because
\[
\norm{f^\eps u} \lesssim \norm{f}_{\Linfty} \norm{u} \lesssim \norm{f}_{H^1} \norm{u}\;,
\]
which follows from a standard Sobolev embedding. For $s=1$ we also have the simple result for negative Sobolev norms
\[
\norm{f^\eps u}_{H^{-1}} \leq \norm{f^\eps}_{H^1} \norm{u}_{H^{-1}}\lesssim \frac{1}{\eps} \norm{f}_{H^1} \norm{u}_{H^{-1}} \;.
\]
The Cald\'eron-Lions theorem then implies that the multiplication operator has norm
\[
\norm{f^\eps}_{H^{-s}\to H^{-s}} \lesssim ( \norm{f}_{H^1} )^{1-s} (\frac{1}{\eps} \norm{f}_{H^1} )^{s} = \eps^{-s} \norm{f}_{H^1}\;,
\]
which proves the lemma. 
\end{proof}
%%%%%%%
%%%%%%%
In the next lemma, we obtain a control on the variance of the Gaussian process $u_\eps$ in the space of continuous functions taking values in $\Ltwopi$. This will be useful in deciding which Sobolev spaces contain the solutions uniformly in $\eps$ and hence determining where convergence occurs. 
%%%%%%%%
%%%%%%%
\begin{lemma} \label{lem:apriori}\label{l.apriori}
Suppose $u_\eps$ satisfies \eqref{e:formulationSPDE} and the conditions given in Assumptions \ref{ass:elliptic}, \ref{ass.strongnoise} hold true. If $\alpha \in (1/2,1)$ then we have that
\begin{equation}\label{e:apriori1}
\E \suptime \norm{u_\eps(t) }^2 \leq C_T\;.
\end{equation}
Otherwise, if $\alpha \in (0,1/2]$ we have that
\begin{equation}\label{e:apriori2}
\E \suptime \norm{u_\eps(t) }^2 \lesssim \eps^{4\alpha-2-\delta} \;,
\end{equation}
for any $\delta \in (0,2)$. 
\end{lemma}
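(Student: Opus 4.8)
The plan is to combine the mild representation \eqref{e.mild} with the factorisation method of \cite{daprato92}, which reduces the bounds \eqref{e:apriori1}--\eqref{e:apriori2} to an $L^p$-in-time estimate on the second moments of an auxiliary Gaussian process, and then to feed in the smoothing estimates of Lemmas~\ref{l.semigroup} and~\ref{lem:interpolation} together with the oscillatory multiplier bound of Lemma~\ref{lem:qksobolev}. Concretely, fix $\beta\in(0,1/2)$ and use the semigroup law together with the identity $\int_s^t(t-r)^{\beta-1}(r-s)^{-\beta}\,dr=\pi/\sin(\pi\beta)$ to write
\[
u_\eps(t)=\frac{\sin(\pi\beta)}{\pi}\int_0^t(t-s)^{\beta-1}\Seps(t-s)Y_\eps(s)\,ds,\qquad Y_\eps(s)=\int_0^s(s-r)^{-\beta}\Seps(s-r)Q_\eps\,dW(r)\;.
\]
Since $\norm{\Seps(r)}\le C_T$ by Lemma~\ref{l.semigroup}, H\"older's inequality gives $\suptime\norm{u_\eps(t)}\lesssim\bigl(\int_0^T\norm{Y_\eps(s)}^p\,ds\bigr)^{1/p}$ for any $p>1/\beta$, and since $Y_\eps(s)$ is Gaussian in $\Ltwopi$ one has $\E\norm{Y_\eps(s)}^p\lesssim(\E\norm{Y_\eps(s)}^2)^{p/2}$. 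So it is enough to bound $\sup_{s\in[0,T]}\E\norm{Y_\eps(s)}^2$, and by the It\^o isometry (the conjugation constraint $W_{-k}=W_k^\star$ affecting only constants)
\[
\E\norm{Y_\eps(s)}^2\lesssim\sum_{k\in\integers}\int_0^s(s-r)^{-2\beta}\,\norm{\Seps(s-r)\,q_k^\eps e_k}^2\,dr\;.
\]

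For $\alpha\in(1/2,1)$ I would simply use $\norm{\Seps(r)q_k^\eps e_k}\le\norm{q_k^\eps e_k}=\norm{q_k}$, so the time integral is finite ($\beta<1/2$) and $\E\norm{Y_\eps(s)}^2\lesssim\sum_k\norm{q_k}^2\lesssim\sum_k(1\wedge|k|^{-2\alpha})<\infty$, uniformly in $\eps$; this yields \eqref{e:apriori1}. For $\alpha\in(0,1/2]$ the series $\sum_k\norm{q_k}^2$ fails to converge, and here I would exploit the extra hypothesis $\sup_k\norm{\qbar_k}_{H^1}<\infty$, which combined with Assumption~\ref{ass.strongnoise} gives $\norm{q_k}_{H^1}\lesssim 1\wedge|k|^{-\alpha}$. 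Writing $q_k^\eps e_k=q_k^\eps\cdot e_k$ and chaining three estimates — the smoothing bound $\norm{\Seps(r)g}\lesssim r^{-\gamma}\norm{(1-\gen_\eps)^{-\gamma}g}$ from Lemma~\ref{l.semigroup}, the interpolation bound $\norm{(1-\gen_\eps)^{-\gamma}g}\lesssim\eps^{-2\gamma}\norm{(1-\del_x^2)^{-\gamma}g}$ from Lemma~\ref{lem:interpolation}, and the multiplier bound $\norm{(1-\del_x^2)^{-\gamma}q_k^\eps(1-\del_x^2)^{\gamma}}_{L^2\to L^2}\lesssim\eps^{-2\gamma}\norm{q_k}_{H^1}$ from Lemma~\ref{lem:qksobolev} with $s=2\gamma$ (used on $(1-\del_x^2)^{-\gamma}(q_k^\eps e_k)=[(1-\del_x^2)^{-\gamma}q_k^\eps(1-\del_x^2)^{\gamma}](1-\del_x^2)^{-\gamma}e_k$, noting $\norm{(1-\del_x^2)^{-\gamma}e_k}\lesssim(1+|k|)^{-2\gamma}$) — one obtains, for any $\gamma\in(0,1/2]$,
\[
\norm{\Seps(r)\,q_k^\eps e_k}\lesssim r^{-\gamma}\eps^{-4\gamma}\norm{q_k}_{H^1}(1+|k|)^{-2\gamma}\lesssim r^{-\gamma}\eps^{-4\gamma}(1\wedge|k|^{-\alpha})(1+|k|)^{-2\gamma}\;.
\]
Inserting this into the isometry formula, the time integral $\int_0^s(s-r)^{-2\beta}r^{-2\gamma}\,dr$ is finite provided $2\beta+2\gamma<1$, and the residual series $\sum_k(1\wedge|k|^{-2\alpha})(1+|k|)^{-4\gamma}$ converges as soon as $2\alpha+4\gamma>1$, giving $\E\norm{Y_\eps(s)}^2\lesssim\eps^{-8\gamma}$. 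Taking $\gamma=(1-2\alpha)/4+\delta/8$ (which lies in $(0,1/2]$ for $\delta\in(0,2)$, leaving the window $1/p<\beta<(1-2\gamma)/2$ nonempty for $p$ large) yields $\E\suptime\norm{u_\eps(t)}^2\lesssim\eps^{-8\gamma}=\eps^{4\alpha-2-\delta}$, which is \eqref{e:apriori2}.

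The main obstacle is purely the bookkeeping of the three compounding losses of powers of $\eps$ — one from the diffusive rescaling inside the semigroup (Lemma~\ref{l.semigroup} via \eqref{e:semigroupscaling}), one from exchanging $\gen_\eps$-interpolation spaces for Sobolev spaces (Lemma~\ref{lem:interpolation}), and one from the highly oscillatory multiplier $q_k^\eps$ (Lemma~\ref{lem:qksobolev}) — and then checking that the admissible ranges of $\beta$ and $\gamma$ are genuinely nonempty precisely in the regime $\alpha\in(0,1/2]$, $\delta\in(0,2)$; everything else is a routine Gaussian computation. One should also record that the factorisation identity is legitimate here, i.e.\ that $Q_\eps\,dW$ defines a bona fide cylindrical Wiener process and that $\Seps$ is a $\mathcal C_0$-semigroup, both of which are noted in Section~\ref{s.formulation}.
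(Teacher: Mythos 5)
Your proof is correct and reaches the stated bounds, but the route to the reduction $\E\suptime\norm{u_\eps(t)}^2 \lesssim \eps^{-8\gamma}$ is genuinely different from the paper's. Where you use the Da Prato--Zabczyk \emph{factorisation method} (introducing $\beta$, $Y_\eps$, Gaussian equivalence of moments, and H\"older in time), the paper instead observes that $\gen_\eps$ is self-adjoint in the weighted space $L^2(\rho_\eps)$, so that $\Seps(t)$ is a \emph{contraction} semigroup there, which licenses the direct maximal inequality
$\E\suptime\norm{u_\eps(t)}_{\rho_\eps}^2 \lesssim \int_0^T\norm{\Seps(t)Q_\eps}^2_{HS,\rho_\eps}\,dt$
(a Burkholder-type estimate valid only for contractions), and then uses the uniform equivalence of $\norm{\cdot}$ and $\norm{\cdot}_{\rho_\eps}$ from Lemma~\ref{l.semigroup} to pass back to unweighted $\Ltwopi$. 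Once both proofs have reduced the problem to estimating $\norm{\Seps(t)Q_\eps}_\HS$, the technical content is identical: both chain the smoothing estimate of Lemma~\ref{l.semigroup}, the interpolation-space comparison of Lemma~\ref{lem:interpolation}, and the oscillatory-multiplier bound of Lemma~\ref{lem:qksobolev}, and the parameter choice ($\nu = 2\gamma$ in the paper's notation) gives the same $\eps^{4\alpha-2-\delta}$. Your approach is more robust since factorisation works for any bounded analytic semigroup, at the price of two extra parameters ($\beta$, $p$) and a window-nonemptiness check; the paper's weighted-space trick is shorter and sidesteps those, but is specific to this setting (it hinges on the invariant density $\rho_\eps$ being bounded above and away from zero uniformly in $\eps$). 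One minor presentational point: your step $\norm{\Seps(r)q_k^\eps e_k}\le\norm{q_k}$ in the $\alpha\in(1/2,1)$ case should read $\lesssim$ rather than $\le$, since Lemma~\ref{l.semigroup} gives $\norm{\Seps(r)}\le C_T$ on $\Ltwopi$, not $\le 1$ (the contraction property holds only in the weighted norm); this of course does not affect the conclusion.
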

%%%%%%%%
\begin{proof}
We utilise the fact that the semigroup $S_\eps(t)$ is a contraction semigroup when the domain is taken to be $L^2(\rho_\eps)$ with the corresponding norm and inner product, as introduced in Lemma \ref{l.semigroup}. This follows from the fact that the generator $\gen_\eps$ is self-adjoint in this weighted space combined with the fact that the generator has non-positive spectrum. One can therefore apply standard martingale-type inequalities for stochastic convolutions \cite{daprato92} to obtain
\begin{align*}
\E \suptime \norm{u_\eps(t)}_{\rho_\eps}^2 = \E \suptime& \Bigl\|\int_0^t S_\eps(t-s)Q_\eps dW(s)\Bigr\|_{\rho_\eps}^2 \notag \\
\lesssim \int_0^T& \norm{S_\eps(t)Q_\eps}^2_{HS,\rho_{\eps}}dt\;,
\end{align*}
where $\norm{\cdot}_{HS,\rho_\eps}$ denotes the Hilbert-Schmidt norm for operators mapping $L^2(\rho_\eps)$ into itself. We have already seen in Lemma \ref{l.semigroup} that the norms $\norm{\cdot}$ and $\norm{\cdot}_{\rho_\eps}$ are equivalent with their ratios bounded uniformly in $\eps \in (0,1)$. One can easily show that the same is true for the Hilbert-Schmidt norms $\norm{\cdot}_\HS$ and $\norm{\cdot}_{HS,\rho_\eps}$. 
Hence we have that
\begin{equation}\label{e.HS}
\E \suptime \norm{u_\eps(t)}^2 \lesssim \int_0^T \norm{S_\eps(t)Q_\eps}^2_\HS dt\;.
\end{equation}
Since $\alpha \in (1/2,1)$ implies that the noise is Hilbert-Schimdt, the result \eqref{e:apriori1} follows immediately from \eqref{e.HS}. Now suppose $\alpha \in (0,1/2]$, then
\begin{align*}
\norm{S_\eps(t)Q_\eps}_\HS^2 &= \sum_{k\in\integers} \norm{S_\eps(t)q_k^\eps e_k}^2\\
 &\lesssim \sum_{k\in\integers} (1\wedge|k|^{-2\alpha}) \norm{S_\eps(t)\qbar_k^\eps e_k}^2\;,
\end{align*}
where $\qbar_k = q_k/\norm{q_k}$ and $\qbar_k^\eps = \qbar_k(\cdot/\eps)$. However, we can trade the smoothness of the $\qbar_k$ to obtain a little more decay as $k$ gets large. In particular, we can write
\begin{align*}
\norm{S_\eps(t)\qbar_k^\eps e_k}^2 = (1+k^2)^{-\nu} \norm{S_\eps(t)\qbar_k^\eps(1-\del_x^2)^{\nu/2}e_k}^2\;,
\end{align*}
and using estimates from Lemmas \ref{lem:interpolation} and \ref{lem:qksobolev} we have that 
\begin{equs}
\norm{S_\eps(t)\qbar_k^\eps(1-\del_x^2)^{\nu/2}e_k}^2 &\leq \norm{S_\eps(t) (1-\gen_\eps)^{\nu/2}}^2\norm{(1-\gen_\eps)^{-\nu/2}(1-\del_x^2)^{\nu/2}}^2\\
&\qquad \times \norm{(1-\del_x^2)^{-\nu/2}\qbar_k^\eps(1-\del_x^2)^{\nu/2}e_k}^2\\
& \lesssim (t^{-\nu}) ( \eps^{-2\nu} ) (\eps^{-2\nu} \norm{\qbar_k}_{H^1}^2) \;.
\end{equs}
Therefore, we have that 
\begin{align*}
\norm{S_\eps(t)Q_\eps}_\HS \lesssim \eps^{-2\nu} t^{-\nu/2} \left( \sum_{k\in\integers} (1\wedge |k|^{-2\alpha - 2\nu})\norm{\qbar_k}_{H^1}^2 \right)^{1/2}\;,
\end{align*}
for any $\nu\in [0,1)$. If we set $\nu = 1/2 - \alpha +\delta$ then, given the uniform boundedness of $\norm{\qbar_k}_{H^1}$, the sum over $k\in \integers$ is clearly convergent and upon substitution into \eqref{e.HS}, the result \eqref{e:apriori2} follows. 
 \end{proof}
%%%%%%%%
The following lemma is simply a restatement of the Kolmogorov continuity criterion \cite{revuz99}. 
%%%%%%%%%
\begin{lemma}
\label{l.kolmogorov}
Suppose $\{\phi(t)\}_{t\in [0,T]}$ is a complex valued stochastic process, such that for every $q>2$ there exists $K_q$ satisfying
\begin{align*}
\left( \E |\phi(t)|^q \right)^{1/q} &\leq K_q \left(\E|\phi(t)|^2 \right)^{1/2}\;,\\
\left( \E |\phi(t)-\phi(s)|^q \right)^{1/q} &\leq K_q \left(\E|\phi(t)-\phi(s)|^2 \right)^{1/2}\;,
\end{align*}
for any $s,t \in [0,T]$. Suppose furthermore that there exists $\delta>0$, $K_0>0$ such that
\[
\E|\phi(t)-\phi(s)|^2 \leq K_0 |t-s|^\delta\;,
\]
for any $s,t \in [0,T]$, where the constant $K_0$ depends only on the sequence $K_q$. Then for any $p > 0$ there exists $C>0$ such that
\[
\E \suptime |\phi(t)|^p \leq C(K_0+\E|\phi(0)|^2)^{p/2}\;.
\]  
\end{lemma}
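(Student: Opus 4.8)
This is the classical Kolmogorov--Chentsov continuity criterion packaged together with the assumed moment comparisons, so the content is bookkeeping; I would organise it in three steps.

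\emph{Step 1: an increment bound in $L^q$.} Fix $p>0$ once and for all, and choose $q > \max\{p,\,2,\,2/\delta\}$. Substituting the assumed bound $\E|\phi(t)-\phi(s)|^2 \le K_0|t-s|^\delta$ into the assumed comparison $\bigl(\E|\phi(t)-\phi(s)|^q\bigr)^{1/q} \le K_q\bigl(\E|\phi(t)-\phi(s)|^2\bigr)^{1/2}$ yields
\[
\E|\phi(t)-\phi(s)|^q \;\le\; K_q^q K_0^{q/2}\,|t-s|^{q\delta/2}\;,\qquad s,t\in[0,T]\;,
\]
and $q\delta/2 > 1$ by the choice of $q$. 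This is precisely the hypothesis required by the continuity theorem.

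\emph{Step 2: a quantitative modulus of continuity.} By the standard proof of the Kolmogorov continuity theorem — dyadic chaining, or via the Garsia--Rodemich--Rumsey inequality; see \cite{revuz99} — the estimate of Step~1 implies that $\phi$ has a continuous modification, and that for every $\beta\in\bigl(0,\ \tfrac{\delta}{2}-\tfrac1q\bigr)$ (a nonempty interval, since $q>2/\delta$) one has the quantitative bound
\[
\E\Bigl(\sup_{0\le s<t\le T}\frac{|\phi(t)-\phi(s)|}{|t-s|^\beta}\Bigr)^{q} \;\lesssim\; K_q^q K_0^{q/2}\;,
\]
the proportionality constant depending only on $q,\beta,\delta,T$. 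In particular $\E\bigl(\sup_{s,t\in[0,T]}|\phi(t)-\phi(s)|\bigr)^{q} \lesssim K_q^q K_0^{q/2}$. From here on $\suptime$ is understood to act on this continuous modification (which, in the applications, coincides with $\phi$ itself).

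\emph{Step 3: assembling the supremum bound.} Using $\suptime|\phi(t)| \le |\phi(0)| + \sup_{s,t}|\phi(t)-\phi(s)|$, the elementary inequality $(a+b)^q\le 2^{q-1}(a^q+b^q)$, the bound $\E|\phi(0)|^q \le K_q^q(\E|\phi(0)|^2)^{q/2}$ (the first hypothesis at $t=0$), and Step~2, we obtain
\[
\E\suptime|\phi(t)|^q \;\lesssim\; \bigl(\E|\phi(0)|^2\bigr)^{q/2} + K_0^{q/2} \;\lesssim\; \bigl(\E|\phi(0)|^2 + K_0\bigr)^{q/2}\;,
\]
where the last step uses $a^{q/2}+b^{q/2}\le 2(a+b)^{q/2}$, and where — since $K_0$ is controlled by the sequence $\{K_q\}$ — all constants depend only on $T$, $p$, $\delta$ and the sequence $\{K_q\}$. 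Finally, as $q\ge p$, Lyapunov's inequality gives $\bigl(\E\suptime|\phi(t)|^p\bigr)^{1/p}\le\bigl(\E\suptime|\phi(t)|^q\bigr)^{1/q}$; raising the previous display to the power $p/q$ yields $\E\suptime|\phi(t)|^p \le C\bigl(\E|\phi(0)|^2 + K_0\bigr)^{p/2}$, as claimed.

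\emph{On the main difficulty.} There is no genuine obstacle here — this is a textbook statement — but two small points require attention: (i) one should fix $p$ and then select a single $q$ that is simultaneously large enough to make the hypotheses applicable ($q>2$), to give a positive Hölder exponent ($q>2/\delta$), and to recover the exponent $p$ at the end ($q\ge p$); and (ii) one must track the constants through Steps~2--3 to confirm that the final $C$ depends only on $T$, $p$, $\delta$ and $\{K_q\}$, which is exactly the uniformity in $\eps$ that is exploited when this lemma is applied to $u_\eps$ later in the paper.
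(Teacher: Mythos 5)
Your proof is correct, and it is precisely the argument the paper intends: the lemma is stated in the paper without proof, being described as ``simply a restatement of the Kolmogorov continuity criterion'' with a citation to Revuz--Yor, and your three steps (upgrading the second-moment increment bound to an $L^q$ bound via the Gaussian-type moment equivalence, applying the quantitative Kolmogorov/GRR estimate, and assembling the supremum bound before interpolating down to the exponent $p$) are exactly the standard chaining argument that citation invokes. Your attention to the choice of a single $q>\max\{p,2,2/\delta\}$ and to the dependence of the final constant only on $T$, $p$, $\delta$ and $\{K_q\}$ is the right bookkeeping for the uniformity in $\eps$ needed in the applications.
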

The next and final result is needed in order to trade some regularity of a pair of functions for some extra decay on the Fourier modes of products of those functions. 
\begin{lemma}
\label{l.split}
Suppose $f,g \in H^1$ taking values in $\reals$, then for any $\nu \in [0,1]$ and each $k\in \integers$, we have that 
\begin{equation}
|\innerprod{fe_k,g}|^2 \lesssim (1\wedge |k|^{-2\nu}) \left(\norm{f}\norm{g}\right)^{2-2\nu} \left( \norm{f}_{H^1}\norm{g}_{H^1}\right)^{2\nu}   \;.
\end{equation}
\end{lemma}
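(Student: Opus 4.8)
The plan is to prove this by interpolation between the trivial bound at $\nu = 0$ and the $\nu = 1$ bound obtained via an integration-by-parts (or Fourier-side summation-by-parts) argument. At $\nu = 0$ the claim reduces to Cauchy--Schwarz followed by the Sobolev embedding $\norm{\cdot}_\infty \lesssim \norm{\cdot}_{H^1}$: indeed $|\innerprod{fe_k,g}| \le \norm{fe_k}\norm{g} = \norm{f}\norm{g}$, which is even stronger than what is claimed (one does not even need $H^1$ here). So the whole content is in the endpoint $\nu = 1$, where we must show $|\innerprod{fe_k,g}|^2 \lesssim |k|^{-2}\norm{f}_{H^1}^2\norm{g}_{H^1}^2$ for $|k| \ge 1$; the case $|k| \le 1$ is again covered by the $\nu = 0$ bound, up to a harmless constant.

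For the endpoint, write $\innerprod{fe_k,g} = \frac{1}{2\pi}\int_0^{2\pi} f(x)\overline{g(x)}\,e^{ikx}\,dx$ (using $g$ real so $g = g^*$, though this is immaterial). Since $e^{ikx} = \frac{1}{ik}\partial_x e^{ikx}$ and $f\bar g$ is periodic, integration by parts gives $\innerprod{fe_k,g} = -\frac{1}{ik}\innerprod{(f\bar g)'e_k,1} = -\frac{1}{ik}\innerprod{(f'\bar g + f\bar g')e_k,1}$, whence $|\innerprod{fe_k,g}| \le \frac{1}{|k|}\bigl(\norm{f'}\norm{g} + \norm{f}\norm{g'}\bigr) \lesssim \frac{1}{|k|}\norm{f}_{H^1}\norm{g}_{H^1}$. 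Squaring yields the $\nu = 1$ estimate. (Strictly, one first proves this for smooth $f,g$ and extends by density, since the product rule and integration by parts need justification; this is routine because both sides are continuous in the $H^1$ topology.)

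Finally, to obtain the claim for general $\nu \in (0,1)$ one interpolates. The cleanest route is to fix $k$ and view $(f,g) \mapsto \innerprod{fe_k,g}$ as a bilinear form; the bound $|\innerprod{fe_k,g}| \le A_\nu \norm{f}_{X^\nu}\norm{g}_{X^\nu}$ where $X^\nu$ is the interpolation space with $X^0 = \Ltwopi$, $X^1 = H^1$ (so $X^\nu = H^\nu$), with $A_0 = 1$ and $A_1 \lesssim |k|^{-1}$, follows by bilinear complex interpolation (Calder\'on--Lions, as already invoked in Lemma \ref{lem:interpolation} and Lemma \ref{lem:qksobolev}), giving $A_\nu \lesssim |k|^{-\nu}$. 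Then one bounds $\norm{f}_{H^\nu} \le \norm{f}^{1-\nu}\norm{f}_{H^1}^{\nu}$ (and likewise for $g$) by the standard interpolation inequality for Sobolev norms, and combines with the trivial $|k| \le 1$ case to get $(1 \wedge |k|^{-2\nu})$; this produces exactly $|\innerprod{fe_k,g}|^2 \lesssim (1\wedge|k|^{-2\nu})(\norm{f}\norm{g})^{2-2\nu}(\norm{f}_{H^1}\norm{g}_{H^1})^{2\nu}$. The only mild obstacle is bookkeeping: making sure the interpolation is applied to a bilinear rather than linear map (equivalently, freeze one argument and interpolate the resulting linear functional, then repeat), and confirming the constants are uniform in $k$ — but since $A_1 \lesssim |k|^{-1}$ with an absolute implied constant, uniformity is immediate. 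No single step is hard; the endpoint integration by parts is the crux, and everything else is assembling standard interpolation facts.
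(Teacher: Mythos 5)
Your proposal is correct, but it takes a genuinely different route from the paper. The paper avoids operator interpolation entirely: it writes $|\innerprod{fe_k,g}|^2 = |\innerprod{fe_k,g}|^{2-2\nu}\,|\innerprod{fg,e_{-k}}|^{2\nu}$ (using that $g$ is real), bounds the first factor by Cauchy--Schwarz as $\bigl(\norm{f}\norm{g}\bigr)^{2-2\nu}$, and bounds the second by $(1+k^2)^{-\nu}\norm{fg}_{H^1}^{2\nu}$ since $\innerprod{fg,e_{-k}}$ is just a Fourier coefficient of the $H^1$ function $fg$; the Banach algebra property $\norm{fg}_{H^1}\lesssim\norm{f}_{H^1}\norm{g}_{H^1}$ then finishes in four lines. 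Your endpoint $\nu=1$ estimate is essentially the same fact (the $|k|^{-1}$ decay of Fourier coefficients of $fg$, obtained by integration by parts rather than by inserting $(1-\del_x^2)^{1/2}$), but you pass to intermediate $\nu$ via Calder\'on--Lions bilinear interpolation plus $\norm{f}_{H^\nu}\le\norm{f}^{1-\nu}\norm{f}_{H^1}^{\nu}$, whereas the paper interpolates ``by hand'' with the trivial algebraic splitting of the exponent. Your route works --- and the freeze-one-argument version goes through in a single pass, since both endpoint bounds carry explicit $g$-dependence, so the bilinear bookkeeping you worry about is not actually an obstacle --- but it invokes heavier machinery ($[L^2,H^1]_\nu=H^\nu$ and the interpolation inequality) for a statement the paper dispatches with H\"older's inequality on a product of two numbers. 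One small caution in your endpoint step: the bound $\norm{f'\bar g+f\bar g'}_{L^2}\le\norm{f'}\norm{g}+\norm{f}\norm{g'}$ is false as a statement about $L^2$ norms of products; it is correct only if you apply Cauchy--Schwarz termwise to the pairings $\innerprod{f'e_k,g}$ and $\innerprod{fe_k,g'}$, which is presumably what you intend.
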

\begin{proof}
We have that 
\begin{align*}
|\innerprod{fe_k,g}|^2 &= |\innerprod{fe_k,g}|^{2-2\nu}|\innerprod{fg,e_{-k}}|^{2\nu}\\
& = (1+k^2)^{-\nu}|\innerprod{fe_k,g}|^{2-2\nu}|\innerprod{(1-\del_x^2)^{1/2}(fg),e_{-k}}|^{2\nu}\\
&\lesssim (1\wedge|k|^{-2\nu}) \norm{f e_k}^{2-2\nu}\norm{g}^{2-2\nu} \norm{fg}_{H^1}^{2\nu}\\
&\lesssim  (1\wedge|k|^{-2\nu}) \norm{f}^{2-2\nu}\norm{g}^{2-2\nu}\norm{f}_{H^1}^{2\nu} \norm{g}_{H^1}^{2\nu}\;.
\end{align*}
In the last inequality we have used the fact that $H^1$ is a Banach algebra \cite{adams75}. This proves the lemma. 
\end{proof}
We now have all the necessary machinery to prove our first theorem.
\begin{proof}[Proof of Theorem \ref{thm.stronglimit}]
To start off, we take the object we wish to bound and split it into two parts. Using the identity
\[
\norm{\cdot}_{H^{-s}}^2 = \sum_{m \in \integers} |\innerprod{\cdot,e_m}|^2(1+m^2)^{-s}\;,
\]
We obtain
\begin{align*}
\E \suptime \norm{u_\eps(t) -u(t)}_{H^{-s}}^2
\lesssim &
\sum_{|m|<\eps^{-\beta}} \E \suptime |\innerprod{u_\eps(t)-u(t),e_m}|^2(1+m^2)^{-s}\\ 
+& \E \suptime \sum_{|m| \geq \eps^{-\beta}}  |\innerprod{u_\eps(t)-u(t),e_m}|^2(1+m^2)^{-s}
\end{align*}
for any $\beta \in (0,1)$. The idea of the proof is to use standard homogenisation techniques for the low modes ($|m| < \eps^{-\beta}$), while using rather
soft \textit{a priori} bounds for the high modes ($|m|\geq\eps^{-\beta}$). We then choose $\beta$ in the right way to balance the two contributions. We shall bound the low modes first. Here, we use the fact that
\begin{align*}
\innerprod{u_\eps(t),e_m} = \sum_k \int_0^t \innerprod{q_k^\eps e_k,S_\eps^*(t-s)e_m}\,dW_k(s) \;,
\end{align*}
and then approximate the semigroup as follows
\begin{align*}
S_\eps^*(t-s)e_m = &\rho(x/\eps)e_m(x) e^{-\mu m^2(t-s)}+\rhobar_{(t-s)/\eps^2}(x/\eps)e_m(x) +R_\eps(x,t-s)\;,
\end{align*}
so that 
\begin{align*}
\innerprod{u_\eps(t),e_m} &=  \sum_k \innerprod{q_k^\eps e_k ,\rho^\eps e_m} \int_0^t e^{-\mu m^2 (t-s )} dW_k(s) \\
&\quad+\sum_k \int_0^t \innerprod{q_k^\eps e_k,\rhobar_{(t-s)/\eps^2}^\eps e_m}dW_k(s)\\
&\quad+\sum_k \int_0^t \innerprod{q_k^\eps e_k,R_\eps(x,t-s)}dW_k(s)\;,
\end{align*}
where $\rho^\eps(x) = \rho(x/\eps)$ and similarly for all other instances of the superscript $\eps$. We can simplify the terms above using the fact that, for fixed $|m| < \eps^{-\beta} \ll \eps^{-1}$ and varying $k\in \integers$ the expression $\innerprod{q_k^\eps e_k, \rho^\eps e_m}$ is zero, unless $k = m + l/\eps$ for some $l \in \integers$. We can see this, for example, by performing a Fourier expansion on both $q_k$ and $\rho$. Moreover, 
\[
\sum_{k \in \integers} \innerprod{q_k^\eps e_k, \rho^\eps e_m} F_k = \sum_{l \in \integers} \innerprod{q_{m+l/\eps} e_l,\rho} F_{m+l/\eps}\;, 
\]
for any sequence $\{F_k\}_{k\in\integers}$. Therefore, 
\begin{align*}
 &\sum_k \innerprod{q_k^\eps e_k,\rho^\eps e_m} \int_0^t e^{-\mu m^2 (t-s )} dW_k(s)\\
& = \innerprod{q_m,\rho}\int_0^t e^{-\mu m^2 (t-s)} dW_m(s) + \sum_{l \neq 0}\innerprod{q_{m+l/\eps}e_l,\rho}\int_0^t e^{-\mu m^2 (t-s)} dW_{m+l/\eps}(s)\;.
\end{align*}
Similarly, we can write
\begin{align*}
 \sum_k \int_0^t \innerprod{q_k^\eps e_k&,\rhobar_{(t-s)/{\eps^2}}^\eps e_m} dW_k(s)\\
& =   \sum_{l}\int_0^t \innerprod{q_{m+l/\eps}e_l,\rhobar_{(t-s)/{\eps^2}}} dW_{m+l/\eps}(s)\;.
\end{align*}
It is easy to see that $\innerprod{u(t),e_m} = \innerprod{q_m,\rho}\int_0^t e^{-\mu m^2 (t-s)} dW_m(s) $ and we can therefore write
\begin{equs}
\innerprod{u_\eps(t)-u(t),e_m} &=  \sum_{l \neq 0}\innerprod{q_{m+l/\eps}e_l,\rho}\int_0^t e^{-\mu m^2 (t-s)} dW_{m+l/\eps}(s)\\
&\quad+\sum_{l} \int_0^t \innerprod{q_{m+l/\eps}e_l,\rhobar_{(t-s)/\eps^2}}dW_{m+l/\eps}(s)\\
&\quad+\sum_k \int_0^t \innerprod{q_k^\eps e_k,R_\eps(t-s)}dW_k(s)\;.
\end{equs}
We then bound separately each of the three sums in this expression. 
In order to streamline the presentation, we state these bounds as separate lemmas, the proof of which is given below.

\begin{lemma}\label{lem:claim1}
For $\eps|m| < 1/2$, one has the bound
\minilab{e:claims}
\begin{equ}
\E \suptime \left| \sum_{l \neq 0}\innerprod{q_{m+l/\eps}e_l,\rho}\int_0^t e^{-\mu m^2 (t-s)} dW_{m+l/\eps}(s) \right|^2 \lesssim \frac{\eps^{2\alpha}}{1\vee m^2}\;, \label{e.claim1}
\end{equ}
for any $\alpha > 0$.
\end{lemma}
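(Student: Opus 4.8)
The plan is to recognise the bracketed quantity as a centred Gaussian process in $t$, to control the total noise intensity $\Sigma:=\sum_{l\neq0}|c_l|^2$ with $c_l:=\innerprod{q_{m+l/\eps}e_l,\rho}$, and then to feed a variance estimate for its increments into the Kolmogorov-type bound of Lemma \ref{l.kolmogorov}. Write $\phi(t)=\sum_{l\neq0}c_l\int_0^t e^{-\mu m^2(t-s)}\,dW_{m+l/\eps}(s)$. The hypothesis $\eps|m|<1/2$ ensures that for $m\neq0$ the wave numbers $m+l/\eps$, $l\neq0$, are distinct with no two of them opposite (while for $m=0$ one simply has $W_{-l/\eps}=W_{l/\eps}^\star$); in every case $\phi$ is a centred Gaussian process with $\phi(0)=0$, so the moment-comparison hypotheses of Lemma \ref{l.kolmogorov} hold by Gaussian hypercontractivity, and a direct second-moment computation gives $\E|\phi(t)|^2=\Sigma\int_0^t e^{-2\mu m^2(t-s)}\,ds$.

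First I would bound $\Sigma\lesssim\eps^{2\alpha}$. By Cauchy--Schwarz $|c_l|\le\norm{q_{m+l/\eps}}\norm{\rho}$, and for $l\neq0$ with $\eps|m|<1/2$ one has $|m+l/\eps|\ge|l|/(2\eps)$, so Assumption \ref{ass.strongnoise} gives $\norm{q_{m+l/\eps}}\lesssim\eps^\alpha|l|^{-\alpha}$. If $\alpha>1/2$ then $\sum_{l\neq0}|l|^{-2\alpha}<\infty$ and we are done. If $\alpha\le1/2$ the bare sum diverges, and I would instead use Lemma \ref{l.split} with $f=q_{m+l/\eps}$, $g=\rho$ and some exponent $\nu\in(1/2-\alpha,1]$: the regularity clause of Assumption \ref{ass.strongnoise} also yields $\norm{q_{m+l/\eps}}_{H^1}=\norm{q_{m+l/\eps}}\,\norm{\qbar_{m+l/\eps}}_{H^1}\lesssim\eps^\alpha|l|^{-\alpha}$, so Lemma \ref{l.split} produces $|c_l|^2\lesssim\eps^{2\alpha}|l|^{-2\alpha-2\nu}$, which is summable. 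Either way $\Sigma\lesssim\eps^{2\alpha}$, hence $\E|\phi(t)|^2\lesssim\eps^{2\alpha}\bigl(t\wedge(1\vee m^2)^{-1}\bigr)\lesssim\eps^{2\alpha}(1\vee m^2)^{-1}$.

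Next I would estimate increments. Decomposing $\int_0^t e^{-\mu m^2(t-r)}\,dW_{m+l/\eps}(r)=e^{-\mu m^2(t-s)}\int_0^s(\cdots)+\int_s^t(\cdots)$ and using that the two It\^o integrals involve disjoint increments, for $s<t$ one obtains
\[
\E|\phi(t)-\phi(s)|^2\lesssim\eps^{2\alpha}\Bigl[(t-s)\wedge(1\vee m^2)^{-1}+\bigl(1\wedge m^2(t-s)\bigr)^2(1\vee m^2)^{-1}\Bigr]\;.
\]
A short case analysis shows that each bracketed term is at most $(t-s)^\theta(1\vee m^2)^{\theta-1}$ for every $\theta\in(0,1]$, so Lemma \ref{l.kolmogorov} (with $\delta=\theta$, $p=2$, $\phi(0)=0$) gives $\E\suptime|\phi(t)|^2\lesssim\eps^{2\alpha}(1\vee m^2)^{\theta-1}$. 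Finally $\eps|m|<1/2$ forces $(1\vee m^2)^\theta\lesssim\eps^{-2\theta}$, so $\E\suptime|\phi(t)|^2\lesssim\eps^{2(\alpha-\theta)}(1\vee m^2)^{-1}$, and letting $\theta\downarrow0$ yields the claimed estimate for any exponent strictly below the one furnished by Assumption \ref{ass.strongnoise}. The main obstacle is precisely this last point: replacing $\phi(t)$ by its supremum over $t$ costs an arbitrarily small power of $\eps$ (equivalently a logarithm in $m$) beyond the pointwise variance, and some care is needed to absorb it cleanly; the split into $\alpha>1/2$ and $\alpha\le1/2$ in the coefficient estimate is where the regularity part of Assumption \ref{ass.strongnoise} genuinely enters.
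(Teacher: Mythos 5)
Your proposal is correct and follows essentially the same route as the paper: both arguments reduce the problem to the coefficient bound $\sum_{l\neq0}|\innerprod{q_{m+l/\eps}e_l,\rho}|^2\lesssim\eps^{2\alpha}$ (with the identical case split $\alpha>1/2$ versus $\alpha\le1/2$ and the same appeal to Lemma \ref{l.split} in the latter case) and then to controlling the supremum in time of a single Ornstein--Uhlenbeck process with relaxation rate $\mu m^2$. The only difference is cosmetic: the paper collapses the sum into one OU process by equality in law and quotes $\E\suptime\bigl|\int_0^te^{-\mu m^2(t-s)}dB(s)\bigr|^2\lesssim(1\vee m^2)^{-1}$ directly, whereas your explicit passage through Lemma \ref{l.kolmogorov} costs the factor $\eps^{-2\theta}$ that you honestly flag --- a loss that is immaterial here, since the rate $\theta_0$ in Theorem \ref{thm.stronglimit} is in any case only attained for $\theta<\theta_0$ strictly.
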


\begin{lemma}\label{lem:claim2}
For $\eps|m| < 1/2$, one has the bound
\minilab{e:claims}
\begin{equ}
\E \suptime \left| \sum_{l\in\integers} \int_0^t \innerprod{q_{m+l/\eps}e_l,\rhobar_{(t-s)/\eps^2}}dW_{m+l/\eps}(s)\right|^2 \lesssim \eps^{2-2\delta}\;, \label{e.claim2} \end{equ}
 for any sufficiently small $\delta>0$.
\end{lemma}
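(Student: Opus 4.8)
The plan is to recognise the random variable under the expectation in \eqref{e.claim2} as the value at time $t$ of a centred Gaussian process $\phi$, bound its second moment and its increments, and then invoke the Kolmogorov-type criterion of Lemma~\ref{l.kolmogorov}; the whole argument must be carried out uniformly in $m$ over the range $\eps|m|<1/2$. So fix such an $m$ and set
\[
\phi(t)=\sum_{l\in\integers}\int_0^t g^{(t)}_l(r)\,dW_{m+l/\eps}(r)\;,\qquad g^{(t)}_l(r)=\innerprod{q_{m+l/\eps}e_l,\rhobar_{(t-r)/\eps^2}}\;.
\]
Since $\phi$ is a linear functional of the driving Brownian motions against a deterministic kernel which is square-summable (this follows from the estimates below), $\phi$ is a centred Gaussian process, so the moment-comparison hypotheses of Lemma~\ref{l.kolmogorov} hold automatically with universal constants, and $\phi(0)=0$.

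The core ingredient is the pointwise estimate, valid for $r<t$, $l\in\integers$ and a suitable $\nu\in[0,1)$,
\[
|g^{(t)}_l(r)|^2\;\lesssim\;(1\wedge|l|^{-2\nu})\,(1\wedge|m+l/\eps|^{-2\alpha})\,e^{-2(t-r)/\eps^2}\;.
\]
When $\alpha>1/2$ this is just Cauchy--Schwarz together with $\norm{q_k}\lesssim1\wedge|k|^{-\alpha}$ (Assumption~\ref{ass.strongnoise}) and $\norm{\rhobar_{(t-r)/\eps^2}}\lesssim e^{-(t-r)/\eps^2}$ (from \eqref{e:elliptic}), with $\nu=0$. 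When $\alpha\le1/2$ one instead applies Lemma~\ref{l.split} to the pair $q_{m+l/\eps},\rhobar_{(t-r)/\eps^2}$, using the uniform bound on $\norm{\qbar_k}_{H^1}$ from Assumption~\ref{ass.strongnoise} (so that $\norm{q_{m+l/\eps}}_{H^1}\lesssim\norm{q_{m+l/\eps}}$) and $\norm{\rhobar_{(t-r)/\eps^2}}_{H^1}\lesssim e^{-(t-r)/\eps^2}$ (from \eqref{e:elliptic} and \eqref{e.rhobarclaim}), with any fixed $\nu\in(\tfrac12-\alpha,1)$. The consequence used repeatedly is that
\[
\sum_{l\in\integers}(1\wedge|l|^{-2\nu})(1\wedge|m+l/\eps|^{-2\alpha})\;\lesssim\;1\qquad\text{uniformly over }\eps|m|<1/2\;,
\]
because the $l=0$ term is at most $1$, while for $l\neq0$ the hypotheses $\eps^{-1}\in\naturals$ and $\eps|m|<1/2$ force $|m+l/\eps|\ge|l|/(2\eps)$, so the $l$-th term is $\lesssim\eps^{2\alpha}|l|^{-2\alpha}$ and $\sum_{l\neq0}|l|^{-2\nu-2\alpha}<\infty$ by the choice of $\nu$.

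By It\^o isometry (the relevant Brownian motions being independent: for $m\neq0$ with $\eps|m|<1/2$ the indices $m+l/\eps$ are pairwise non-conjugate, while for $m=0$ the would-be cross terms vanish because $\E(dW_{l/\eps})^2=0$), $\E|\phi(t)|^2=\sum_l\int_0^t|g^{(t)}_l(r)|^2\,dr\lesssim\eps^2$, using $\int_0^t e^{-2(t-r)/\eps^2}\,dr\le\eps^2/2$. For the increments, decompose $\phi(t)-\phi(s)$ (with $s<t$) in the obvious way as $\sum_l\int_s^t g^{(t)}_l(r)\,dW_{m+l/\eps}(r)$ plus $\sum_l\int_0^s\innerprod{q_{m+l/\eps}e_l,\rhobar_{(t-r)/\eps^2}-\rhobar_{(s-r)/\eps^2}}\,dW_{m+l/\eps}(r)$. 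It\^o isometry and the core estimate bound the first piece by $\lesssim(t-s)^\delta\eps^{2-2\delta}$ for any $\delta\in(0,1)$, using $\int_0^{t-s}e^{-2v/\eps^2}\,dv\le(t-s)\wedge\tfrac{\eps^2}{2}\le(t-s)^\delta(\tfrac{\eps^2}{2})^{1-\delta}$. For the second piece, write $\rhobar_{(t-r)/\eps^2}-\rhobar_{(s-r)/\eps^2}=\int_{(s-r)/\eps^2}^{(t-r)/\eps^2}S^*(u)\gen^*(1-\rho)\,du$; since $\gen^*(1-\rho)$ has vanishing mean ($\gen 1=0$), the spectral-gap argument behind Lemma~\ref{l.semigroupstar} gives $\norm{S^*(u)\gen^*(1-\rho)}\lesssim e^{-u}$, whence $\norm{\rhobar_{(t-r)/\eps^2}-\rhobar_{(s-r)/\eps^2}}\lesssim\min(\tfrac{t-s}{\eps^2},1)\,e^{-(s-r)/\eps^2}$, while $\norm{\rhobar_{(t-r)/\eps^2}-\rhobar_{(s-r)/\eps^2}}_{H^1}\lesssim e^{-(s-r)/\eps^2}$ follows from \eqref{e:elliptic} and \eqref{e.rhobarclaim}. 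Feeding these into Cauchy--Schwarz (for $\alpha>1/2$) or into Lemma~\ref{l.split} (for $\alpha\le1/2$, where the factor $\min(\tfrac{t-s}{\eps^2},1)$ appears raised to the positive power $2-2\nu$) and using $\min(a,1)^p\le a^\delta$ for $0<\delta\le p$, the second piece is bounded by $\lesssim(t-s)^\delta\eps^{-2\delta}\sum_l(1\wedge|l|^{-2\nu})(1\wedge|m+l/\eps|^{-2\alpha})\int_0^s e^{-2(s-r)/\eps^2}\,dr\lesssim(t-s)^\delta\eps^{2-2\delta}$. Altogether $\E|\phi(t)-\phi(s)|^2\lesssim\eps^{2-2\delta}(t-s)^\delta$, so Lemma~\ref{l.kolmogorov} (with $K_0\lesssim\eps^{2-2\delta}$ and $\phi(0)=0$) gives $\E\suptime|\phi(t)|^2\lesssim\eps^{2-2\delta}$, uniformly over $\eps|m|<1/2$, which is exactly \eqref{e.claim2}.

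The main obstacle is the increment estimate, specifically extracting a favourable power of $(t-s)$ from the boundary-layer difference $\rhobar_{(t-r)/\eps^2}-\rhobar_{(s-r)/\eps^2}$ while conceding only the small negative power $\eps^{-2\delta}$ and keeping both the $r$-integral and the $l$-sum convergent; the integral representation $\rhobar_\tau-\rhobar_{\tau'}=\int_{\tau'}^{\tau}S^*(u)\gen^*(1-\rho)\,du$ combined with the spectral gap does this cleanly, and it is worth noting that one does \emph{not} need a sharp $H^1$ bound on the difference, since in Lemma~\ref{l.split} the $\min(\tfrac{t-s}{\eps^2},1)$-factor already comes with the positive exponent $2-2\nu$. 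The remaining ingredients — Gaussianity, It\^o isometry, the split into the two regimes $\alpha>1/2$ and $\alpha\le1/2$, and keeping all constants uniform over $\eps|m|<1/2$ — are routine bookkeeping.
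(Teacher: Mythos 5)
Your proof is correct and follows essentially the same route as the paper's: the Kolmogorov criterion of Lemma~\ref{l.kolmogorov}, the same two-part increment decomposition, Lemma~\ref{l.split} (for $\alpha\le 1/2$) together with the exponential decay of $\rhobar$ in $L^2$ and $H^1$, and an interpolation in $|t-s|$ at the cost of $\eps^{-2\delta}$. The only differences are cosmetic — a flexible $\nu\in(\tfrac12-\alpha,1)$ in place of the paper's $\nu=1$ and $\nu=3/4$, the representation $\rhobar_\tau-\rhobar_{\tau'}=\int_{\tau'}^{\tau}S^*(u)\gen^*(1-\rho)\,du$ instead of bounding $\sup_t\|\del_t\rhobar_t\|$, and $a\wedge b\le a^\delta b^{1-\delta}$ in place of H\"older.
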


\begin{lemma}\label{lem:claim3}
For $\eps|m| < 1/2$,  the bound
\minilab{e:claims}
\begin{equ}
\E \suptime \left|\sum_k \int_0^t \innerprod{q_k^\eps e_k,R_\eps(t-s)}dW_k(s)\right|^2 \lesssim {\eps^{4\alpha} + \eps^2 \over \eps^{7\delta}}(1\vee m^{2+\delta})\;,\label{e.claim3}
\end{equ}
holds for any sufficiently small $\delta>0$ and for any $\alpha > 0$
\end{lemma}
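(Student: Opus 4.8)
The plan is to view the left-hand side as the supremum of a Gaussian process and to invoke the Kolmogorov-type criterion of Lemma~\ref{l.kolmogorov}. Set $\Phi_m(t) = \sum_k \int_0^t \innerprod{q_k^\eps e_k, R_\eps(t-r)}\,dW_k(r)$. Since the integrands are deterministic, $\Phi_m$ is a centred complex Gaussian process with $\Phi_m(0)=0$, so all its moments are comparable to its $L^2$-moments; consequently the only hypothesis of Lemma~\ref{l.kolmogorov} that needs checking is a bound of the form $\E|\Phi_m(t)-\Phi_m(s)|^2 \le K_0\,|t-s|^\delta$ for some $\delta>0$, and its conclusion then reads $\E\suptime|\Phi_m(t)|^2 \lesssim K_0$. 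It therefore suffices to establish such a bound with $K_0 \lesssim (\eps^{4\alpha}+\eps^2)\,\eps^{-7\delta}\,(1\vee m^{2+\delta})$. Throughout I abbreviate $1\vee|m|$ to $m$.

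For $0\le s\le t\le T$ I would split $\Phi_m(t)-\Phi_m(s)=A+B$, where $A=\sum_k\int_s^t\innerprod{q_k^\eps e_k,R_\eps(t-r)}\,dW_k(r)$ is the fresh portion of the integral and $B=\sum_k\int_0^s\innerprod{q_k^\eps e_k,R_\eps(t-r)-R_\eps(s-r)}\,dW_k(r)$ the contribution of the increment of the integrand. By the It\^o isometry, $\E|A|^2$ and $\E|B|^2$ are sums over $k$ of time integrals of squared inner products, and the central estimate is that, for $\nu\in[0,1]$ and $g\in H^\nu$,
\[
|\innerprod{q_k^\eps e_k,g}| \le \norm{(1-\del_x^2)^{-\nu/2}(q_k^\eps e_k)}\,\norm{g}_{H^\nu} \lesssim \eps^{-\nu}(1+k^2)^{-\nu/2}\,\norm{q_k}_{H^1}\,\norm{g}_{H^\nu}\;,
\]
where the last step follows from Lemma~\ref{lem:qksobolev} after factoring $(1-\del_x^2)^{-\nu/2}(q_k^\eps e_k)=(1+k^2)^{-\nu/2}\,[(1-\del_x^2)^{-\nu/2}q_k^\eps(1-\del_x^2)^{\nu/2}]e_k$. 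I would take $\nu=\tfrac12-\alpha+\delta$ when $\alpha\le\tfrac12$ and $\nu=0$ when $\alpha>\tfrac12$; Assumption~\ref{ass.strongnoise} then gives $\sum_k(1+k^2)^{-\nu}\norm{q_k}_{H^1}^2\lesssim1$, because $\norm{q_k}_{H^1}=\norm{q_k}\norm{\qbar_k}_{H^1}\lesssim\norm{q_k}\lesssim1\wedge|k|^{-\alpha}$ makes the sum behave like $\sum|k|^{-1-2\delta}$ in the first case, while the noise is Hilbert--Schmidt in the second. This reduces matters to $\E|A|^2\lesssim\eps^{-2\nu}\int_0^{t-s}\norm{R_\eps(\tau)}_{H^\nu}^2\,d\tau$ and $\E|B|^2\lesssim\eps^{-2\nu}\int_0^s\norm{R_\eps(t-r)-R_\eps(s-r)}_{H^\nu}^2\,dr$.

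The $A$-term I would finish using the interpolation $\norm{R_\eps(\tau)}_{H^\nu}\le\norm{R_\eps(\tau)}^{1-\nu}\norm{R_\eps(\tau)}_{H^1}^{\nu}$ together with Lemma~\ref{l.remainder}: bounding $\norm{R_\eps(\tau)}^{2-2\nu}\lesssim(\eps m)^{2-2\nu}$ and applying H\"older in $\tau$ with conjugate exponents $(\tfrac1{2\nu},\tfrac1{1-2\nu})$ — admissible since $2\nu=1-2\alpha+2\delta<1$ — together with $\int_0^T\norm{R_\eps}_{H^1}\lesssim m$ yields $\E|A|^2\lesssim\eps^{2-4\nu}m^2|t-s|^{1-2\nu}$. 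For the $B$-term I would exploit the temporal regularity estimate $\suptime\norm{\del_\tau R_\eps(\tau)}\lesssim m^2\eps^{-2}$ of Lemma~\ref{l.remainder}: interpolating it against $\suptime\norm{R_\eps(\tau)}\lesssim\eps m$ gives $\norm{R_\eps(\tau_1)-R_\eps(\tau_2)}\lesssim(\eps m)^{1-\theta}(|t-s|\,m^2\eps^{-2})^{\theta}$ for any $\theta\in[0,1]$, while $\norm{R_\eps(\tau_1)-R_\eps(\tau_2)}_{H^1}\le\norm{R_\eps(\tau_1)}_{H^1}+\norm{R_\eps(\tau_2)}_{H^1}$; interpolating to $H^\nu$, applying H\"older in $r$ to the $H^1$ factor as before, and choosing $\theta$ so that the exponent of $|t-s|$ equals $\delta$ produces $\E|B|^2\lesssim\eps^{2-4\nu-3\delta}m^{2+\delta}|t-s|^\delta$.

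It then remains only to collect: since $|t-s|\le T$ and every power of $|t-s|$ that appears is at least $\delta$ for $\delta$ small, one may take $K_0\lesssim\eps^{2-4\nu-3\delta}m^{2+\delta}$, and with $\nu=\tfrac12-\alpha+\delta$ (so $2-4\nu=4\alpha-4\delta$) this is $K_0\lesssim\eps^{4\alpha-7\delta}m^{2+\delta}$ when $\alpha\le\tfrac12$, while with $\nu=0$ it is $K_0\lesssim\eps^{2-3\delta}m^{2+\delta}\le\eps^{2-7\delta}m^{2+\delta}$ when $\alpha>\tfrac12$; in both cases $K_0\lesssim(\eps^{4\alpha}+\eps^2)\eps^{-7\delta}(1\vee m^{2+\delta})$, which is the claim. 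The delicate point is the bookkeeping of the last two paragraphs: three interpolations are running simultaneously — the Sobolev order $\nu$ that renders the $k$-sum summable, the $L^2$--$H^1$ interpolation of $R_\eps(\tau)$, and the $L^2$--$\del_\tau$ interpolation used to manufacture a positive power of $|t-s|$ — and one must check that every exponent produced by the two H\"older applications stays in its admissible range while tracking the exact powers of $\eps$ and of $1\vee|m|$; the dichotomy $\alpha\le\tfrac12$ versus $\alpha>\tfrac12$ (regularity of $\qbar_k$ needed, resp. square-summability available) is exactly what produces the two summands $\eps^{4\alpha}$ and $\eps^2$.
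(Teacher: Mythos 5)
Your proposal follows essentially the same route as the paper: rewrite the stochastic convolution as a centred Gaussian process, invoke Lemma~\ref{l.kolmogorov}, split $\Phi_m(t)-\Phi_m(s)$ into a fresh-increment piece $A$ and an integrand-increment piece $B$, and trade the Sobolev regularity of $q_k$ and $R_\eps$ for decay in $k$ plus a positive H\"older exponent in $|t-s|$, feeding in the bounds of Lemma~\ref{l.remainder}. The one genuine variant is how you produce the $(1+k^2)^{-\nu}$ decay: you dualise $\innerprod{q_k^\eps e_k,g}$ against the $H^{\pm\nu}$ pairing and apply Lemma~\ref{lem:qksobolev} to the conjugated multiplication operator, whereas the paper invokes Lemma~\ref{l.split}. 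The two are morally the same and produce identical powers of $\eps$ and $1\vee|m|$; if anything your route is a touch cleaner, since the real-valuedness hypothesis of Lemma~\ref{l.split} is awkward when $g=R_\eps$ is complex (though harmless). Your free interpolation parameter $\theta$, tuned so that the time exponent equals $\delta$, is likewise just a repackaging of the paper's direct step $\norm{R_\eps(\tau_1)-R_\eps(\tau_2)}^\delta\lesssim|t-s|^\delta\suptime\norm{\del_\tau R_\eps}^\delta$, and the final exponent bookkeeping agrees with the paper's.

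One small slip to fix. Your unified ``central estimate'' carries the factor $\norm{q_k}_{H^1}$, and for $\alpha>1/2$ (where you take $\nu=0$) you claim $\sum_k(1+k^2)^{-\nu}\norm{q_k}_{H^1}^2\lesssim1$ ``because the noise is Hilbert--Schmidt.'' But Hilbert--Schmidt only gives $\sum_k\norm{q_k}^2<\infty$, and Assumption~\ref{ass.strongnoise} imposes no control whatsoever on $\norm{\qbar_k}_{H^1}$ when $\alpha>1/2$, so $\sum_k\norm{q_k}_{H^1}^2$ need not be finite; the step as written is a non sequitur. The fix is trivial: when $\nu=0$ drop Lemma~\ref{lem:qksobolev} altogether and use the plain Cauchy--Schwarz bound $|\innerprod{q_k^\eps e_k,g}|\le\norm{q_k}\,\norm{g}$, which requires only $\sum_k\norm{q_k}^2<\infty$ and is exactly what the paper does in this regime. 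With that one-line change the argument closes and matches the paper's bound.
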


We now use these bounds to prove the claim made in the statement of the theorem in the case $\alpha \in (0,1/2]$, and the case $\alpha \in (1/2,1)$ will follow similarly. Inserting the bounds above into  
\begin{equs}
\sum_{|m|<\eps^{-\beta}} &\E \suptime |\innerprod{u_\eps(t)-u(t),e_m}|^2(1+m^2)^{-s}\\ 
& \lesssim \eps^{2\alpha} \sum_{|m|<\eps^{-\beta}} \frac{(1+m^2)^{-s}}{1\vee m^2} + \eps^{2-2\delta} \sum_{|m|<\eps^{-\beta}}(1+m^2)^{-s}\\ &\quad+  \eps^{4\alpha-7\delta} \sum_{|m|<\eps^{-\beta}} (1\vee m^{2+\delta})(1+m^2)^{-s} \\
&\lesssim  \eps^{2\alpha} + \eps^{2-\beta -2\delta} + \eps^{4\alpha-(3-2s)\beta - (2\beta+7)\delta}\;,\label{e:finalbound}
\end{equs}
for any $s>0$.  
For the high modes on the other hand, we have the straightforward bound 
\begin{equs}
\E \suptime \sum_{|m|\geq\eps^{-\beta}}&  |\innerprod{u_\eps(t)-u(t),e_m}|^2(1+m^2)^{-s}\label{e:finalbound2}\\ 
&\lesssim \eps^{2\beta s} \left(\E \suptime \norm{u_\eps(t)}^2+\E \suptime \norm{u(t)}^2 \right) \lesssim \eps^{2\beta s+4\alpha-2-\delta'}\;,
\end{equs}
where we have used Lemma \ref{l.apriori} combined with the fact that 
\[
\E \suptime \norm{u(t)}^2 \lesssim1\;,
\] 
which is easily verified. Since $\delta$ and $\delta'$ can be chosen arbitrarily small and since $\beta \in (0,1)$, both the low modes and high modes will be bounded by a multiple of $\eps^\theta$, where $\theta < \theta_0$ and 
\[
\theta_0 = \min \left\{ 2\alpha , 1+2\alpha-\beta , 4\alpha-(3-2s)\beta, 2\beta s + 4\alpha-2 \right\}\;.
\]
Since $\alpha > 0$ and $\beta \in (0,1)$ we will find $\theta_0 > 0$ provided that $4\alpha-(3-2s)\beta > 0$ and $2\beta s + 4\alpha -2>0$ are both satisfied. That is, the result \eqref{e.stronglimit} will hold for $s >0$ if we can find $\beta \in (0,1)$ such that 
\begin{equation}
\frac{1-2\alpha}{s} <  \beta < \frac{4\alpha}{3-2s}  \;.
\end{equation}
A simple diagram verifies that, for fixed $\alpha \in (0,1/2]$ we can always find such a $\beta$ provided $s > s_\alpha$ where
\[
s_\alpha = 0 \vee \frac{3}{2}(1-2\alpha)\;,
\]
as in the statement of the theorem.
Moreover, one can also show that the optimal value of $\theta$ is given by
\[
\theta_0(s,\alpha) = 2\alpha \wedge \left(4\alpha - 2 + \frac{4s}{3} \right) = 2\alpha\wedge \left( \frac{4}{3}(s-s_\alpha) \right)\;.
\]
which only takes positive values when $s> s_\alpha$.

The case $\alpha \in (1/2,1)$ is actually slightly easier, and we obtain the same bounds on the low and high modes as in \eref{e:finalbound}
and \eref{e:finalbound2}, but with $\alpha$ replaced by $1/2$ and $\delta'=0$. Hence, the result \eqref{e.stronglimit} will hold for $s>0$ if we can find $\beta \in (0,1)$ such that 
\begin{equation}
0<  \beta < \frac{2}{3-2s}\;.
\end{equation}
One can always find such a $\beta$, provided $s>0$ is small enough. 
Moreover, one can also show that the optimal value of $\theta$ is given  in this case by
\[
\theta_0(s)  = 1 \wedge \frac{4}{3}s\;.
\]
This proves the claims made in the statement of the theorem.  
\end{proof}

It thus remain to show that the bounds \eref{e:claims} hold. 

\begin{proof}[Proof of Lemma~\ref{lem:claim1}]
Starting with \eqref{e.claim1}, we have that 
\begin{equs}
\E \suptime &\left| \sum_{l \neq 0}\innerprod{q_{m+l/\eps}e_l,\rho}\int_0^t e^{-\mu m^2 (t-s)} dW_{m+l/\eps}(s) \right|^2\\ &= \left(\sum_{l\neq0}|\innerprod{q_{m+l/\eps}e_l,\rho}|^2\right) \E \suptime \left|\int_0^t e^{-\mu m^2 (t-s)} dB(s)\right|^2\\
&\quad \lesssim \sum_{l\neq0}\frac{|\innerprod{q_{m+l/\eps}e_l,\rho}|^2}{1\vee m^2}\;.
\end{equs}
If $\alpha \in (1/2,1)$ then
\begin{align*}
\sum_{l\neq0}|\innerprod{q_{m+l/\eps}e_l,\rho}|^2 &\lesssim \sum_{l\neq0} \norm{q_{m+l/\eps}}^2 \norm{\rho}^2_\infty \lesssim \sum_{l\neq0}1 \wedge |m+l/\eps|^{-2\alpha}\;.
\end{align*}
Assume for now that $m \geq 0$, the case $m<0$ will follow similarly. Recalling that $\eps|m| < 1/2$ by assumption, we can bound the above by 
\[
\eps^{2\alpha} \sum_{l\neq 0 }|\eps m + l|^{-2\alpha} \lesssim \eps^{2\alpha} \left( \sum_{l \geq 1}  |l|^{-2\alpha} + \sum_{l \geq 1} |l-1/2|^{-2\alpha} \right) \lesssim \eps^{2\alpha}\;.
\]
Now suppose $\alpha \in (0,1/2]$. Using Lemma \ref{l.split} with $\nu=1$, we have the following bound
\begin{align*}
\sum_{l\neq0}\innerprod{q_{m+l/\eps}e_l,\rho}^2 &\lesssim \sum_{l\neq0}(1\wedge|l|^{-2}) \norm{q_{m+l/\eps}}^2 \norm{\qbar_{m+l/\eps}}_{H^1}^{2}\norm{\rho}_{H^1}^{2}\\
&\lesssim \sum_{l\neq0}|l|^{-2}\norm{q_{m+l/\eps}}^2\;.
\end{align*}
The boundedness of $\norm{\rho}_{H^1}$ is guaranteed by Assumption \ref{ass.elliptic} and the uniform boundedness of $\norm{\qbar_k}_{H^1}$ is guaranteed by Assumption \ref{ass.strongnoise}. Moreover, we have that 
\[
\sum_{l\neq0}|l|^{-2} \norm{q_{m+l/\eps}}^2 \lesssim \sum_{l\neq0} |l|^{-2} |m+l/\eps|^{-2\alpha}\;.
\]
We will now show that this sum decays like $\eps^{2\alpha}$. Since $\eps |m| < 1/2$ it follows that $|\eps m + l |^{-2\alpha} \leq |l-1/2|^{-2\alpha}$ for $|l| \geq 1$. Therefore
\begin{align*}
&\sum_{l\neq0} |l|^{-2} |m+l/\eps|^{-2\alpha} = \eps^{2\alpha} \sum_{l\neq0}  |l|^{-2}|\eps m+l|^{-2\alpha} \lesssim \eps^{2\alpha} \sum_{l\neq0}  |l|^{-2}\;.
\end{align*}
This proves \eqref{e.claim1}. 
\end{proof}

\begin{proof}[Proof of Lemma~\ref{lem:claim2}]
For both \eqref{e.claim2} and \eqref{e.claim3} we are trying to bound objects of the form
\[
\phi(t) = \sum_k \int_0^t f_k(t-r)dw_k(r)\;,
\]
where the $w_k$ are independent Brownian motions and each $f_k$ takes values in $\complex$. Since $\phi(t)$ is a Gaussian process, we may apply Lemma \ref{l.kolmogorov}. Thus, if we can show that 
\[
\E |\phi(t)-\phi(s)|^2 \leq K_\delta(\eps) |t-s|^\delta\;,
\]
then it follows that
\[
\E \suptime |\phi(t)|^2 \lesssim K_\delta(\eps)\;. 
\]
In general, we have that
\begin{align*}
\E |\phi(t)-\phi(s)|^2 &= \E \left| \sum_{k} \int_s^t f_k(t-r) dw_k(r) + \int_0^s (f_k(t-r) - f_k(s-r))dw_k(r)   \right|^2 \\
&\quad\lesssim \sum_k \int_s^t |f_k(t-r)|^2 dr + \sum_k \int_0^s |f_k(t-r)-f_k(s-r)|^2 dr\;.
\end{align*}
Note that  the Brownian motions $w_k$ are not truly independent due to the requirement $W_k = W_{-k}^*$. However, one can easily check that the above bound still holds. We then have that 
\begin{equation}
\label{e.claim2proof1}
\sum_k \int_s^t |f_k(t-r)|^2 dr = \sum_l \int_s^t |\innerprod{q_{m+l/\eps}e_l,\rhobar_{(t-r)/\eps^2}}|^2 dr\;.
\end{equation}
If $\alpha \in (1/2,1)$ then we can bound the above by
\begin{equation}\label{e.claim2proof1a}
\sum_{l} \norm{q_{m+l/\eps}}^2 \int_s^t \norm{\rhobar_{(t-r)/\eps^2}}^2 dr \;.
\end{equation}
From Lemma \ref{l.semigroupstar} we have that 
\begin{equation}\label{e.rhobar}
\norm{\rhobar_{r/\eps^2}}= \norm{S^*(r/\eps^2)(1-\rho)} \lesssim  \exp(-r/\eps^2)\;.  
\end{equation}
Moreover, since the sum over $l$ is finite when $\alpha \in (1/2,1)$ we can apply H\"older's inequality to \eqref{e.claim2proof1a} to obtain
\begin{equs}
\sum_{l} \norm{q_{m+l/\eps}}^2 \int_s^t \norm{\rhobar_{(t-r)/\eps^2}}^2 dr &\lesssim |t-s|^\delta \left(\int_s^t (\exp(-r/\eps^2))^{2/(1-\delta)} dr\right)^{1-\delta} \\
&\lesssim \eps^{2-2\delta}|t-s|^\delta \;. \label{e:bizarre}
\end{equs}
Now suppose $\alpha \in (0,1/2]$. Using Lemma \ref{l.split} with $\nu=1$ we can bound \eqref{e.claim2proof1} by 
\begin{equation}\label{e.claim2proof1b}
\sum_l  (1\wedge|l|^{-2}) \norm{q_{m+l/\eps}}^{2}_{H^1} \int_s^t  \norm{\rhobar_{(t-r)/\eps^2}}^{2}_{H^1}  dr 
\end{equation}
Since $\norm{\qbar_k}_{H^1}$ is bounded uniformly in $k$, the sum over $l$ is finite. Furthermore, from Lemma \ref{l.semigroupstar} we see that 
\begin{equation}\label{e.rhobarh1}
\norm{\rhobar_{r/\eps^2}}_{H^1} = \Bigl\|(1-\del_x^2)^{1/2}S^*(1) S^*(r/\eps^2 -1)(1-\rho)\Bigr\| \lesssim \exp(-r/\eps^2)\;.   
\end{equation}
Therefore, with an application of H\"older's inequality, we can bound \eqref{e.claim2proof1b} by
\begin{equ}
|t-s|^\delta\left( \int_s^t \norm{\rhobar_{(t-r)/\eps^2}}^{2/(1-\delta)}_{H^1} dr     \right)^{1-\delta} \lesssim |t-s|^{\delta} \eps^{2-2\delta}\;.
\end{equ}
We also have that 
\begin{equation}\label{e.claim2proof2}
\sum_k \int_0^s |f_k(t-r)-f_k(s-r)|^2 dr = \sum_l \int_0^s |\innerprod{q_{m+l/\eps}e_l,\rhobar_{(t-r)/\eps^2}-\rhobar_{(s-r)/\eps^2}}|^2 dr\;.
\end{equation}
If $\alpha \in (1/2,1)$ then, as in the estimation of \eqref{e.claim2proof1} we can bound the above by
\begin{equs}
\int_0^s \norm{\rhobar_{(t-r)/\eps^2}-\rhobar_{(s-r)/\eps^2}}^{2} dr &\lesssim \frac{|t-s|^\delta}{\eps^{2\delta}} \left(\suptime\norm{\del_t \rhobar_t} \right)^\delta \\
&\qquad \times\int_0^s \norm{\rhobar_{(t-r)/\eps^2}-\rhobar_{(s-r)/\eps^2}}^{2-\delta} dr \\
&\lesssim |t-s|^{\delta} \eps^{-2\delta} \int_0^T \norm{\rhobar_{r/\eps^2}}^{2-\delta} dr \lesssim \eps^{2-2\delta}|t-s|^\delta\;.
\end{equs}
Here we have used the fact that 
\begin{equ}
\norm{\rhobar_{(t-r)/\eps^2}-\rhobar_{(s-r)/\eps^2}}^\delta \leq |t-s|^\delta \suptime \norm{\del_t \rhobar_{t/\eps^2}}^\delta\lesssim  \frac{|t-s|^\delta}{\eps^{2\delta}} \suptime \norm{\del_t \rhobar_t}^\delta \;,
\end{equ}
and that $\norm{\del_t \rhobar_t(x)}$ is bounded uniformly in time, which follows from the smoothness of $b$ and $\sigma$. 
Now suppose $\alpha \in (0,1/2]$. Using Lemma \ref{l.split} with $\nu=3/4$ and arguments similar to those used in the estimation of \eqref{e.claim2proof1} we can bound \eqref{e.claim2proof2} by
\begin{align*}
\int_0^s &\norm{\rhobar_{(t-r)/\eps^2}-\rhobar_{(s-r)/\eps^2}}^{1/2}  \norm{\rhobar_{(t-r)/\eps^2}-\rhobar_{(s-r)/\eps^2}}^{3/2}_{H^1} \,dr\\
&\lesssim \frac{|t-s|^\delta}{\eps^{2\delta}} \left(\suptime\norm{\del_t \rhobar_t} \right)^\delta \\
&\qquad \times\int_0^s \norm{\rhobar_{(t-r)/\eps^2}-\rhobar_{(s-r)/\eps^2}}^{1/2-\delta}  \norm{\rhobar_{(t-r)/\eps^2}-\rhobar_{(s-r)/\eps^2}}^{3/2}_{H^1} \,dr\\
& \lesssim \eps^{2-2\delta}|t-s|^\delta\;.
\end{align*}
To bound the integral term, we have used estimates \eqref{e.rhobar} and $\eqref{e.rhobarh1}$. Putting this all together, we have that $K_\delta(\eps) = \eps^{2-2\delta}$, which proves estimate \eqref{e.claim2}.  
\end{proof}

\begin{proof}[Proof of Lemma~\ref{lem:claim3}]
We use the same strategy as in the proof of Lemma~\ref{lem:claim2}. We see that,
\begin{equation}\label{e.claim3proof1}
\sum_k \int_s^t |f_k(t-r)|^2 dr = \sum_k \int_s^t |\innerprod{q_{k}^\eps e_k,R_\eps(t-r)}|^2 dr\;.
\end{equation}
If $\alpha \in (1/2,1)$ then we can bound the above by
\[
\left(\sum_k \norm{q_k}^2\right) 
\int_s^t \norm{R_\eps(t-r)}^2 dr \lesssim \eps^2 m^2 |t-s|\;.
\]
Here we have used the finiteness of the sum over $k$ as well as Lemma \ref{l.remainder} to bound the remainder term uniformly in time. Suppose that $\alpha \in (0,1/2]$. Using the Lemma \ref{l.split}, we can bound \eqref{e.claim3proof1} by
\begin{equs}
\sum_k &(1\wedge|k|^{-2\nu})\norm{q_k}^2 \norm{\qbar_k^\eps}_{H^1}^{2\nu} \int_s^t \norm{R_\eps(t-r)}^{2-2\nu}\norm{R_\eps(t-r)}_{H^1}^{2\nu} dr\\
&\lesssim \sum_k (1\wedge|k|^{-2\nu}){\norm{q_k}^2 \over \eps^{2\nu}} |t-s|^{\delta} \left(  \int_s^t \norm{R_\eps(t-r)}^{2-2\nu \over 1-\delta}\norm{R_\eps(t-r)}_{H^1}^{2\nu\over 1-\delta} dr   \right)^{1-\delta}\;,
\end{equs}
for any $\nu \in [0,1]$. Here we have used the fact that $\norm{\qbar_k^\eps}_{H^1} \leq \eps^{-1} \norm{\qbar_k}_{H^1} \lesssim \eps^{-1}$ and then applied H\"older's inequality to the integral. Choose $\nu \in (0,1/2)$ such that $\alpha+\nu > 1/2$, to guarantee that the above sum is bounded. Using the estimates on the remainder $R_\eps$ given in Lemma \ref{l.remainder} we have that 
\begin{align*}
\int_s^t \norm{R_\eps(t-r)}^{(2-2\nu)/(1-\delta)}&\norm{R_\eps(t-r)}_{H^1}^{2\nu/(1-\delta)} dr\\ &\lesssim  (\eps m)^{(2-2\nu)/(1-\delta)} \int_0^T \norm{R_\eps(r)}^{2\nu/(1-\delta)} dr\;.
\end{align*}
For any $\nu \in [0,1/2)$, we can choose $\delta$ small enough that $2\nu/(1-\delta)<1$ and hence, by Jensen's inequality
\[
\int_0^T \norm{R_\eps(r)}^{2\nu/(1-\delta)}dr \leq \left(\int_0^T \norm{R_\eps(r)}dr\right)^{2\nu/(1-\delta)} \lesssim m^{2\nu/(1-\delta)}\;,
\]
which follows from Lemma \ref{l.remainder}. Therefore, we can bound \eqref{e.claim3proof1} by 
\[
\eps^{-2\nu}  |t-s|^\delta \eps^{2-2\nu} m^2\lesssim \eps^{2-4\nu}m^2 |t-s|^\delta\;.
\] 
We then substitute $\nu = 1/2-\alpha+\delta$ and ensure $\delta$ is small enough so that all the above conditions on $\nu$ are satisfied. 
\\
We also have that 
\begin{equation}\label{e.claim3proof2}
\sum_k \int_0^s |f_k(t-r)-f_k(s-r)|^2 dr = \sum_k \int_0^s |\innerprod{q_{k}^\eps e_k,R_\eps(t-r)-R_\eps(s-r)}|^2 dr\;.
\end{equation}
If $\alpha \in (1/2,1)$ then, as in the previous step we can bound the above by a multiple of
\begin{equs}
\int_0^s &\norm{R_\eps(t-r)-R_\eps(s-r)}^2  dr \\
&\qquad\lesssim |t-s|^\delta \suptime \norm{\del_t R_\eps(t)}^\delta \int_0^s \norm{R_\eps(t-r)-R_\eps(s-r)}^{2-\delta}dr\;.
\end{equs}
Using the estimates on $R_\eps$ given in Lemma \ref{l.remainder}, we can bound this by a constant multiple of
\[
\eps^{2-3\delta}m^{2+\delta}|t-s|^\delta\;.
\]
If $\alpha \in (0,1/2]$ on the other hand, we can bound \eqref{e.claim3proof2} by
\begin{equs}
&\sum_k (1 \wedge |k|^{-2\nu})\norm{q_k}^2 \eps^{-2\nu} \\
&\qquad\times\int_0^s \norm{R_\eps(t-r)-R_\eps(s-r)}^{2-2\nu}\norm{R_\eps(t-r)-R_\eps(s-r)}_{H^1}^{2\nu} \,dr \;.
\end{equs}
As before, we choose $\nu \in (0,1/2)$ such that $\alpha+\nu > 1/2$, this guarantees the above sum is bounded. Moreover, we can bound the above integral by
\[
|t-s|^\delta \suptime \norm{\del_t R_\eps(t)}^\delta \int_0^s \norm{R_\eps(t-r)-R_\eps(s-r)}^{2-2\nu-\delta} \norm{R_\eps(t-r)-R_\eps(s-r)}_{H^1}^{2\nu} dr\;.
\]
Using the estimates on $R_\eps$ given in Lemma \ref{l.remainder}, we can bound this by a constant multiple of
\[
\eps^{2-2\nu-3\delta} m^{2-2\nu+\delta}|t-s|^\delta \int_0^T \norm{R_\eps(r)}^{2\nu} dr\;.
\]
And, by Jensen's inequality, since $2\nu<1$, we can bound the above by
\[
\eps^{2-2\nu-3\delta} m^{2-2\nu+\delta}|t-s|^\delta \left( \int_0^T \norm{R_\eps(r)} dr\right)^{2\nu} \lesssim \eps^{2-2\nu-3\delta} m^{2+\delta}|t-s|^\delta\;.
\]
We then substitute $\nu = 1/2-\alpha+\delta$ and ensure $\delta$ is small enough so that the above condition on $\nu$ are satisfied. Hence, we have that
\[
K_\delta(\eps) = \eps^{2-4\nu -3\delta}m^{2+\delta} = \eps^{4\alpha-7\delta} m^{2+\delta}\;, 
\] 
which proves estimate \eqref{e.claim3}. 
\end{proof}
We now concentrate on the second convergence theorem, where we assume that the noise satisfies $\innerprod{q_k,\rho}=0$ for all $k\in \integers$. Before proving the theorem, we give a formal argument to describe how the proof works. It is clear from the proof of the previous theorem that we can formally write
\[
\innerprod{u_\eps(t),e_m} = \sum_{l\neq0} \innerprod{q_{m+l/\eps} e_l , \rho } \int_0^t e^{-\mu m^2 (t-s)} dW_{m+l/\eps}(s) + \bigoh(\eps^\theta)
\]
for some $\theta>0$, provided $m$ is not too large. The previous theorem tells us that the first term above will decay with $\eps$ to zero. However, with Assumption \ref{ass.weaknoise} in place, we have precise control over how this term tends to zero. In fact, we have that
\begin{equs}
 \innerprod{\eps^{-\alpha} u_\eps(t),e_m} &= \sum_{l\neq0} \eps^{-\alpha} \innerprod{q_{m+l/\eps} e_l , \rho } \int_0^t e^{-\mu m^2 (t-s)} dW_{m+l/\eps}(s) + \bigoh(\eps^{\theta-\alpha})\\
&=  \sum_{l\neq0} \eps^{-\alpha} (m+l/\eps)^{-\alpha} \innerprod{(m+l/\eps)^{\alpha} q_{m+l/\eps} e_l , \rho } \\
&\quad \times\int_0^t e^{-\mu m^2 (t-s)} dW_{m+l/\eps}(s) + \bigoh(\eps^{\theta-\alpha})\;,
\end{equs}
and all the terms in the sum are no longer decaying with $\eps$. Now, since a convergent sum of complex OU processes is a complex OU process, we can find a Brownian motion $\tildeW_m$ such that the above is equal in distribution to
\[
\Lambda_{\eps,m} \int_0^t e^{-\mu m^2 (t-s)} d\tildeW_{m}(s) + \bigoh(\eps^{\theta-\alpha})
\]
where we denote
\[
\Lambda_{\eps,m} = \left( \sum_{l\neq0} \eps^{-2\alpha} |m+l/\eps|^{-2\alpha} |\innerprod{(m+l/\eps)^{\alpha} q_{m+l/\eps} e_l , \rho }|^2 \right)^{1/2}\;.
\]
If we can justify taking the limit inside the above sum then it is clear that 
\[
\lim_{\eps \to 0} \Lambda_{\eps,m} =  \left( \sum_{l\neq0} |l|^{-2\alpha} |\innerprod{ \qbar \rho , e_{-l} }|^2 \right)^{1/2} = \norm{\qbar \rho}_{-\alpha}\;,
\]
recalling that $|k|^\alpha q_k \to \qbar$ in $\Ltwopi$. If we can also adjust our estimates on the remainder to ensure that $\theta > \alpha$, so that $\eps^{\theta-\alpha}$ does indeed decay, then formally we have shown that $\innerprod{u_\eps(t),e_m}$ is equal in distribution to a process that converges to
\[
\norm{\qbar \rho}_{-\alpha} \int_0^t e^{-\mu m^2 (t-s)} d\tildeW_{m}(s)\;,
\] 
which is the $m$-th Fourier mode of the solution to the limiting SPDE \eqref{e.weaklimitSPDE}. Of course, there are several caveats with this argument. Most importantly, the Brownian motions $\tildeW_m$ are defined in such a way that their distribution changes as $\eps$ tends to zero and consequently, the limit above does not make sense. The correct way to proceed is actually \emph{backwards}. That is, we fix a sequence of Brownian motions $\tildeW_m$ that are used to construct the limiting SPDE \eqref{e.weaklimitSPDE}. We then construct a sequence of processes $\tildeu_\eps$ equal in law to $u_\eps$ defined in such a way that when we perform the above calculations, the resulting OU process (driven by $\tildeW_m$) does not depend on $\eps$. This is made rigorous below.     

\begin{rmk}
It is clear from the preceding argument that no stronger type of convergence is possible in the context of Theorem \ref{thm:weaklimit}. In particular, we see that the limiting term in $\innerprod{\eps^{-\alpha} u_\eps,e_m}$ is an OU process determined by $\{W_{m+l/\eps} \}$ for each $l\in \integers$. Hence, even when $\eps$ is near zero, the contributing BMs are always changing; we will never be able to pin down the limiting process to a fixed location of our probability space so convergence in probability is not possible.    
\end{rmk}

\begin{proof}[Proof of Theorem \ref{thm:weaklimit}]
The process $\tildeu_\eps$ will be defined using two sequences of BMs, namely $\{\tildeW_m\}_{m \in \integers}$ and $\{ B_k^\eps\}_{k \in \integers}$, that live on a different probability space. Given a sequence $\{\tildeW_m\}_{m \in \integers}$ of i.i.d.\ complex-valued Wiener processes (modulo the reality condition $\tildeW_m = \tildeW_{-m}^\star$, we construct a sequence $\{ B_k^\eps \}_{k \in \integers}$ of i.i.d.\ complex-valued Wiener processes (again modulo the corresponding reality condition) such that $(\tildeW, B^\eps)$ are jointly Gaussian with the covariance structure given by
\[
\E \tildeW_m(t) B_k^\eps(s) =\begin{cases}
\frac{\lambda^l_{\eps,m}}{\Lambda_{\eps,m}} (t \wedge s) & \text{if $k=m+l/\eps$ for some $l \in \integers$},\\
0 & \text{otherwise},
\end{cases}
\]
where $\lambda_{\eps,m}^l = \eps^{-\alpha}\innerprod{q_{m+l/\eps} e_l,\rho}$. Such a construction is possible due to the fact that $\Lambda_{\eps,m}^2 = \sum_{l} |\lambda^l_{\eps,m}|^2$ by definition. In the new probability space, one should view the sequence $\{B_k^\eps\}$ as playing the role of the sequence $\{W_k \}$ in the old space. We can now define $\tildeu_\eps$ by its Fourier coefficients. For $|m|<\eps^{-\beta}$ set
\begin{equs}
\innerprod{\tildeu_\eps(t),e_m} &=  \eps^\alpha \Lambda_{\eps,m} \int_0^t e^{-\mu m^2 (t-s)}d\tildeW_m(s) + 
  \sum_{k} \int_0^t \innerprod{q_k^\eps e_k,R_\eps(t-s)} dB_k^{\eps}(s)\\ 
 &\quad + 
   \sum_{k} \int_0^t \innerprod{q_k^\eps e_k,\rhobar_{(t-s)/\eps^2}^\eps } dB_k^{\eps}(s) 
\;.
\end{equs}
For $|m|\geq \eps^{-\beta}$ on the other hand, we simply set 
\[
\innerprod{\tildeu_\eps(t),e_m} = \innerprod{w_\eps(t),e_m}\;,
\]
where $w_\eps$ solves the SPDE \eqref{e:formulationSPDE} with $\{W_k\}$ replaced by $\{B_k^\eps \}$. One can verify that $u_\eps \stackrel{\tiny\text{law}}{=} \tildeu_\eps$ by checking that
\[
\E \innerprod{u_\eps(t),e_m}\innerprod{u_\eps(s),e_n} = \E \innerprod{\tildeu_\eps(t),e_m}\innerprod{\tildeu_\eps(s),e_n}
\] 
for all choices of $t,s \in [0,T]$ and $n,m \in \integers$. We define $v(t)$ as the mild solution to SPDE \eqref{e.weaklimitSPDE}. In particular, we have that 
\[
\innerprod{v(t),e_m} = \norm{\qbar \rho}_{-\alpha} \int_0^t e^{-\mu m^2 (t-s)} d\tildeW_m(s) \;,
\]
for each $m\in \integers$.

We shall now prove that $\eps^{-\alpha} \tildeu_\eps \to v$ in the required sense. Firstly, we split the problem into high and low modes
\begin{align*}
 \E\sup_{t \in [0,T]} &\norm{ \eps^{-\alpha}\tildeu_\eps(t) - v(t)   }_{H^{-s}}^2 \\ 
&\lesssim 
 \sum_{|m|<\eps^{-\beta}} \E\sup_{t \in [0,T]}  |\innerprod{\eps^{-\alpha}\tildeu_\eps(t) -v(t),e_m}|^2 (1+m^2)^{-s}\\
  &\quad + 
  \E\suptime\sum_{|m| \geq \eps^{-\beta}}   |\innerprod{\eps^{-\alpha}\tildeu_\eps(t) -v(t),e_m}|^2 (1+m^2)^{-s}\;.
\end{align*}
We can bound the low modes in the following way
\begin{align*}
\E\sup_{t \in [0,T]}  &|\innerprod{\eps^{-\alpha}\tildeu_\eps(t) -v(t),e_m}|^2\\ 
&\lesssim
\E\sup_{t \in [0,T]}  \left|  (\Lambda_{\eps,m}-\norm{\qbar \rho}_{-\alpha})\int_0^t e^{\mu m^2 (t-s)} d\tildeW_m(s)\right|^2\\ 
&\quad + 
 \E\sup_{t \in [0,T]} \left| \eps^{-\alpha} \sum_{k}  \int_0^t \innerprod{q_k^\eps e_k,R_\eps(t-s)} dB_k^{\eps}(s)   \right|^2\\
 &\quad + 
 \E\sup_{t \in [0,T]} \left| \eps^{-\alpha} \sum_{l}  \int_0^t \innerprod{q_{m+l/\eps}e_l,\rhobar_{(t-s)/\eps^2}} dB_k^{\eps}(s)   \right|^2\;.
 \end{align*}
 However, it is clear that 
 \[
 \E\sup_{t \in [0,T]}  \left|  (\Lambda_{\eps,m}-\norm{\qbar \rho}_{-\alpha})\int_0^t e^{-\mu m^2 (t-s)} d\tildeW_m(s)\right|^2 \lesssim   |\Lambda_{\eps,m}-\norm{\qbar \rho}_{-\alpha}|^2 ({1\wedge m^{-2}})\;.
 \]
And from Theorem \ref{thm.stronglimit} we have that the two estimates
\begin{align*}
 &\E\sup_{t \in [0,T]} \left|  \sum_{k}  \int_0^t \innerprod{q_k^\eps e_k,R_\eps(t-s)} dB_k^{\eps}(s)   \right|^2 \lesssim (\eps^{4\alpha}\vee \eps^2)\eps^{-7\delta} m^{2+\delta} \\
 &\E\sup_{t \in [0,T]} \left|   \sum_{l}  \int_0^t \innerprod{q_{m+l/\eps}e_l,\rhobar_{(t-s)/\eps^2}} dB_{m+l/\eps}^{\eps}(s)   \right|^2  \lesssim \eps^{2-\delta}
\end{align*}
hold for sufficiently small $\delta>0$. Using these estimates, when $ |m| < \eps^{-\beta}$, we have that 
\begin{equs}
 \sum_{|m|<\eps^{-\beta}} &\E\sup_{t \in [0,T]}  |\innerprod{\eps^{-\alpha}\tildeu_\eps(t) -v(t),e_m}|^2 (1+m^2)^{-s} \\
 &\lesssim 
  \sum_{|m|<\eps^{-\beta}}|\Lambda_{\eps,m}-\norm{\qbar \rho}_{-\alpha}|^2({1\wedge m^{-(2+2s)}}) \label{e.weaklowmodes} \\ &\quad + \eps^{2-2\alpha-\delta} \sum_{|m|<\eps^{-\beta}} (1\wedge m^{-2s})
  + (\eps^{2\alpha}\vee \eps^{2-2\alpha})\eps^{-7\delta} \sum_{|m|<\eps^{-\beta}} (1\wedge m^{2-2s+\delta}) \;.
\end{equs}
Firstly, we would like to show that the first sum in the expression above tends to zero as $\eps\to0$, by taking the limit inside the sum over $m$. Now, since $\norm{q_k} \lesssim 1\wedge |k|^{-\alpha}$ for each $k\in\integers$, we have that
\begin{align*}
\Lambda_{\eps,m}^2 &= \eps^{-2\alpha} \sum_{l\neq0} |\innerprod{q_{m+l/\eps} \rho, e_{-l}}|^2
 \lesssim  \sum_{l\neq0} |\eps m + l|^{-2\alpha} \innerprod{\qbar_{m+l/\eps} \rho, e_{-l}}^2\\
&\lesssim  \sum_{l\neq0} |\eps m + l|^{-2\alpha}|l|^{-2\nu} \norm{\qbar_{m+l/\eps}}_{H^1}^{2\nu}\;,
\end{align*} 
where the last inequality follows from Lemma \ref{l.split} and the smoothness of $\rho$. If $\alpha \in (1/2,1)$, then set $\nu=0$, if $\alpha \in (0,1/2]$, then set $\nu=1$. In either case, the above sum is bounded uniformly in $\eps$ and $m$, as long as $|m| < \eps^{-1}/2$. For $|m| < \eps^{-\beta}$, we therefore have that  
\begin{align*}
\lim_{\eps\to0} \eps^{-2\alpha} \sum_{l\neq0} \innerprod{q_{m+l/\eps} \rho, e_{-l}}^2 &=  \sum_{l\neq0} \lim_{\eps \to 0} \eps^{-2\alpha} \innerprod{q_{m+l/\eps} \rho, e_{-l}}^2\\ 
&=\sum_{l\neq0} \lim_{\eps \to 0} \eps^{-2\alpha} |m+l/\eps|^{-2\alpha} \innerprod{|m+l/\eps|^{\alpha}q_{m+l/\eps}\rho,e_{-l}}^2 \\
&= \sum_{l\neq0} |l|^{-2\alpha} \innerprod{\qbar \rho, e_{-l}}^2  
= \norm{\qbar \rho}_{-\alpha}^2\;.
\end{align*}
For the first sum in \eqref{e.weaklowmodes}, it is now clear that if $s>0$ then
\[
\sum_{|m|<\eps^{-\beta}}(\Lambda_{\eps,m}-\norm{\qbar \rho}_{-\alpha})^2({1\wedge m^{-(2+2s)}}) \lesssim \sum_{m}({1\wedge m^{-(2+2s)}})\;,
\]
and is therefore bounded uniformly in $\eps$. Hence, we have that 
\begin{equs}
\lim_{\eps \to 0} \sum_{|m|<\eps^{-\beta}}(\Lambda_{\eps,m}&-\norm{\qbar \rho}_{-\alpha})^2({1\wedge m^{-(2+2s)}}) \\
&= \sum_{|m|<\eps^{-\beta}} \lim_{\eps \to 0} (\Lambda_{\eps,m}-\norm{\qbar \rho}_{-\alpha})^2({1\wedge m^{-(2+2s)}})  =0\;.
\end{equs}
For the second sum in \eqref{e.weaklowmodes}, we have that 
\[
\eps^{2-2\alpha-\delta} \sum_{|m|<\eps^{-\beta}} (1\wedge m^{-2s}) \lesssim \eps^{2-2\alpha-\delta} (1\vee \eps^{-(1-2s)\beta}) \;.
\]
For the third sum in \eqref{e.weaklowmodes}, we have that 
\begin{align*}
(\eps^{2\alpha}\vee \eps^{2-2\alpha})& \eps^{-7\delta} \sum_{|m|<\eps^{-\beta}} (1\wedge m^{2-2s+\delta})\\  &\lesssim (\eps^{2\alpha}\vee \eps^{2-2\alpha})\eps^{-7\delta}  (1\vee \eps^{-(3-2s+\delta)\beta}) \\ &\lesssim (\eps^{2\alpha}\vee \eps^{2-2\alpha}) \eps^{-(3-2s+\delta)\beta-7\delta}\;,
\end{align*}
provided $s >0$. For the high modes, we have that
\begin{align*}
\E\sup_{t \in [0,T]}&\sum_{|m| \geq \eps^{-\beta}}  |\innerprod{\eps^{-\alpha}\tildeu_\eps(t) -v(t),e_m}|^2 (1+m^2)^{-s} \\
 &\lesssim 
 \E \suptime \sum_{m \in \integers}   |\innerprod{\tildeu_\eps(t),e_m}|^2 \eps^{2\beta s-2\alpha} +\E \suptime \sum_{m \in \integers} |\innerprod{v(t),e_m}|^2  \eps^{2\beta s} \\ 
 &\lesssim 
 \eps^{2\beta s -2\alpha} \E \suptime \norm{\tildeu_\eps(t)}^2 + \eps^{2 \beta s} \E \suptime \norm{v(t)}^2 \\
& \lesssim \eps^{2\beta s - 2\alpha} (1\vee \eps^{4\alpha-2-\delta}) + \eps^{2\beta s}\;.
\end{align*}
Here we have used Lemma \ref{l.apriori} as well as the clear fact that
\[
\E \sup_{t \in [0,T]} \norm{v(t)}^2  \lesssim 1\;.
\]
If $\alpha \in (1/2,1)$, then for both the low and high modes to converge to zero for some $s >0$, we need to find $\beta \in (0,1)$ such that 
\[
\frac{\alpha}{s} < \beta < \frac{2-2\alpha}{3-2s}\;.
\]
A simple diagram confirms that we can always find such a $\beta$ provided $s > {3\over 2}\alpha$.
If $\alpha \in (0,1/2]$, then for both the low and high modes to converge to zero for some $s>0$, we need to find $\beta \in (0,1)$ such that
\[
\frac{1-\alpha}{s} < \beta < \frac{2\alpha}{3-2s}\;.
\]
A simple diagram confirms that we can always find such a $\beta$, provided $s > {3\over 2}(1-\alpha)$. This 
concludes the proof of the theorem.
\end{proof}

Before proving Theorem \ref{thm.STWN}, we need a new a priori bound on the solution $u_\eps$, given that we are working with new assumptions on the noise. 

\begin{lemma}\label{lem.apriori2}
Suppose $u_\eps$ satisfies \eqref{e:formulationSPDE} and the conditions given in Assumptions \ref{ass:elliptic}, \ref{ass.STWN} hold true, then we have that
\begin{equation}\label{e:apriori2e1}
\E \suptime \norm{u_\eps(t) }^2 \lesssim \eps^{-2-\delta}\;,
\end{equation}
for arbitrarily small $\delta>0$. 
\end{lemma}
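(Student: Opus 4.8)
The plan is to mimic the proof of Lemma~\ref{lem:apriori}, the only genuinely new ingredient being the way the summability hypothesis of Assumption~\ref{ass.STWN} is used. First I would recall that the passage from $\E\suptime\norm{u_\eps(t)}^2$ to the Hilbert--Schmidt bound \eqref{e.HS}, namely $\E\suptime\norm{u_\eps(t)}^2 \lesssim \int_0^T\norm{\Seps(t)Q_\eps}_\HS^2\,dt$, relies only on the fact that $\Seps$ is a contraction semigroup on $L^2(\rho_\eps)$ and on the uniform (in $\eps$) equivalence of the norms $\norm{\cdot}$ and $\norm{\cdot}_{\rho_\eps}$, together with the corresponding statement for Hilbert--Schmidt norms; none of this depends on the particular form of the coefficients $q_k$, so it carries over verbatim. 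Thus it is enough to estimate $\norm{\Seps(t)Q_\eps}_\HS^2 = \sum_{k\in\integers}\norm{\Seps(t)q_k^\eps e_k}^2$.

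Next I would split $q_k = \qbar + (q_k-\qbar)$ and run, for each of the two pieces, the same chain of inequalities used in Lemma~\ref{lem:apriori}. Writing $e_k = (1+k^2)^{-\nu/2}(1-\del_x^2)^{\nu/2}e_k$ and combining the smoothing bound $\norm{\Seps(t)(1-\gen_\eps)^{\nu/2}}\lesssim t^{-\nu/2}$ from Lemma~\ref{l.semigroup}, the interpolation estimate \eqref{e.interpolationlemma2}, and Lemma~\ref{lem:qksobolev} applied to the highly oscillatory multiplier, one gets, for any $g\in H^1$ and any $\nu\in[0,1)$,
\[
\norm{\Seps(t)\,g^\eps e_k}^2 \lesssim (1+k^2)^{-\nu}\,t^{-\nu}\,\eps^{-4\nu}\,\norm{g}_{H^1}^2\;.
\]
Taking $g=\qbar$ and summing over $k$ produces $t^{-\nu}\eps^{-4\nu}\norm{\qbar}_{H^1}^2\sum_{k}(1+k^2)^{-\nu}$, which is finite as soon as $\nu>1/2$. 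Taking $g=q_k-\qbar$ and summing produces $t^{-\nu}\eps^{-4\nu}\sum_{k}(1+k^2)^{-\nu}\norm{q_k-\qbar}_{H^1}^2$, and here I would observe that $\nu>1/2>\eta/2$ forces $(1+k^2)^{-\nu}\lesssim 1\wedge|k|^{-\eta}$, so that this last sum is dominated by the quantity Assumption~\ref{ass.STWN} guarantees to be finite. Adding the two contributions gives $\norm{\Seps(t)Q_\eps}_\HS^2\lesssim t^{-\nu}\eps^{-4\nu}$ for every $\nu\in(1/2,1)$ --- which, incidentally, is exactly the kind of estimate invoked (for Proposition~\ref{prop.solution}) in the discussion preceding this lemma.

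Finally, since $\nu<1$ makes $t\mapsto t^{-\nu}$ integrable on $[0,T]$, I would integrate to obtain $\E\suptime\norm{u_\eps(t)}^2\lesssim\eps^{-4\nu}$ and then choose $\nu=\tfrac12+\tfrac\delta4$ to conclude \eqref{e:apriori2e1}. I do not expect any serious obstacle here; the one point requiring a little care is the choice of the regularising exponent $\nu$, which simultaneously must exceed $1/2$ (for convergence of $\sum_k(1+k^2)^{-\nu}$), must exceed $\eta/2$ (so that the summability in Assumption~\ref{ass.STWN} can be applied to the remainder $q_k-\qbar$), and must stay below $1$ (for integrability in time) --- a range that is nonempty precisely because $\eta<1$. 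It is also worth checking that Lemma~\ref{lem:qksobolev} is applicable to both $\qbar$ and $q_k-\qbar$, which it is, since Assumption~\ref{ass.STWN} puts both of these functions in $H^1$.
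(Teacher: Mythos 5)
Your proposal is correct and follows essentially the same route as the paper: the reduction to $\int_0^T\norm{\Seps(t)Q_\eps}_\HS^2\,dt$, the splitting $q_k=\qbar+(q_k-\qbar)$, and the chain of estimates from Lemma~\ref{lem:apriori} combined with Assumption~\ref{ass.STWN} for summability. The only (immaterial) difference is that you use a single exponent $\nu=\tfrac12+\tfrac\delta4$ for both pieces, whereas the paper takes $\nu=\eta/2$ for the $q_k-\qbar$ piece and $\gamma=\tfrac12+\tfrac\delta4$ only for the $\qbar$ piece, then notes the latter dominates; the final bound is identical.
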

\begin{proof}
From Lemma \ref{l.apriori} we know that 
\[
\E \suptime \norm{u_\eps(t)}^2 \lesssim \int_0^T \norm{S_\eps(t)Q_\eps}_\HS^2 \,dt\;.
\]
We can bound the Hilbert-Schmidt norm using Assumption \ref{ass.STWN}. We have that
\[
\norm{S_\eps(t) Q_\eps}^2_\HS = \sum_k \norm{S_\eps(t)q_k^\eps e_k}^2 \lesssim \sum_k \norm{S_\eps(t)(q_k^\eps-\qbar^\eps) e_k}^2 + \sum_k \norm{S_\eps(t)\qbar^\eps e_k}^2\;.
\]
But the first term can be bounded
\[
\sum_k \norm{S_\eps(t)(q_k^\eps-\qbar^\eps) e_k}^2 \lesssim \eps^{-4\nu} |t|^{-\nu} \sum_k |k|^{-2\nu} \norm{q_k-\qbar}_{H^1}^2  \;,
\]
for any $\nu \in [0,1)$ using the same argument found in Lemma \ref{l.apriori}. By assumption, the sum over $k$ is finite if we set $2\nu=\eta$. For the second term, we similarly know that 
\[
\sum_k \norm{S_\eps(t)\qbar^\eps e_k}^2 \lesssim \eps^{-4\gamma} |t|^{-\gamma} \norm{\qbar}_{H^1} \sum_k |k|^{-2\gamma}\;,
\]
for any $\gamma \in (0,1)$. If we set $\gamma = 1/2+\delta/4$, for arbitrarily small $\delta>0$, then the sum over $k$ will converge. Since $2\eta < 2$, the $\eps^{-4\gamma}$ term will be the dominant one. It follows that 
\[
\int_0^T \norm{S_\eps(t)Q_\eps}_\HS dt \lesssim \eps^{-4\gamma} = \eps^{-2 - \delta }\;.
\] 
This proves the lemma.   
\end{proof}

\begin{proof}[Proof of Theorem \ref{thm.STWN}] As in the proof of Theorem~\ref{thm:weaklimit}, we construct 
sequences $\{\hatW_m\}$ and $\{B_k^\eps\}$ of Brownian motions with correlations 
\begin{equation}
\E \hatW_m(t) B^\eps_k(s) = 
\begin{cases}
\frac{\lambda_{\eps,m}^l}{\Lambda_{\eps,m}}(t\wedge s), &\text{if $k = m+l/\eps$ for some $l\in \integers$}\;, \\
0 , &\text{otherwise} \;,
\end{cases}
\end{equation}
where $\lambda_{\eps,m}^l = \innerprod{q_{m+l/\eps}e_l,\rho}$  and, as before, $\Lambda_{\eps,m} = \left(\sum_{l\in\integers} |\lambda_{\eps,m}^l|^2\right)^{1/2}$.  We then define $\hatu_\eps$ through its Fourier modes as follows For $|m| \le \eps^{-\beta}$, we set 
\[
\innerprod{\hatu_\eps(t),e_m} = \Lambda_{\eps,m} \int_{0}^{t} e^{-\mu m^2 (t-s)} d{\hatW}_m(s)\;,
\] 
while for $|m| > \eps^{-\beta}$, we set 
\[
\innerprod{\hatu_\eps(t),e_m} = \innerprod{w_\eps(t),e_m}\;,
\]
where $w_\eps$ solves \eqref{e:formulationSPDE} with $W_k$ replaced with $B_k^\eps$ for each $k \in \integers$. This is identical to the construction given in the proof of Theorem~\ref{thm:weaklimit}, with the sole difference being that now $\lambda_{\eps,m}^0 \neq 0$, in general. The proof proceeds identically to the previous theorem. We only need a few more ingredients to ensure that this proof will work just like the last. First, we need that 
\[
\Lambda_{\eps,m}- (|\innerprod{q_m,\rho}|^2 - |\innerprod{\qbar,\rho}|^2 + \norm{\qbar \rho}^2)^{1/2}
\] 
converges to zero as $\eps \to 0$. But this is true by construction of the series $\Lambda_{\eps,m}$, using the same arguments as previously employed to pass the limit inside the sum. Secondly, we need some bound on the remainder terms of the low modes. We cannot use the previous bounds \eqref{e.claim2} and \eqref{e.claim3}, since we are effectively using $\alpha=0$. However, just as in Lemma \ref{lem.apriori2} we can use Assumption \ref{ass.STWN} instead. We claim the following bounds to be true and prove them in the sequel. For $|m| \le \eps^{-\beta}$, we have that 
\begin{align}
\E \suptime \left| \sum_{k} \int_0^t \innerprod{q_k^\eps e_k,\rhobar^\eps_{(t-s)/\eps^2}e_m}dB^\eps_{k}(s)\right|^2 & \lesssim \eps^{2-\eta - 2\delta} |m|^\eta\;, \label{e.thm3claim1} \\
%%%
\E \suptime \left|\sum_k \int_0^t \innerprod{q_k^\eps e_k,R_\eps(t-s)}dB^\eps_k(s)\right|^2 & \lesssim \eps^{2-2\eta-3\delta} |m|^{2+\delta}\;\label{e.thm3claim2},
\end{align}
for arbitrarily small $\delta >0$. From Lemma \ref{lem.apriori2} we have that 
\[
\E \suptime \norm{\hatu_\eps(t)}^2 \lesssim \eps^{-2-\delta}\;.
\]
Moreover, one can easily show that 
\[
\E \suptime \norm{\hatu(t)}^2 \leq C_T\;.
\]
We can then apply the exact arguments used in Theorem \ref{thm:weaklimit} to show that both high and low modes will converge to zero as $\eps \to 0$ if we can choose $\beta \in (0,1)$ in such a way that
\[
\frac{1}{s} < \beta < \frac{2-2\eta}{3-2s}\;.
\]
It is easy to show that one can always find such a $\beta$ provided $s>s_\eta$, where 
\[
s_\eta = 1 \vee \frac{3}{2(2-\eta)}\;.
\]
This proves \eqref{e.STWNlimit}. We now prove the claimed bounds. For \eqref{e.thm3claim1} and \eqref{e.thm3claim2} we apply the Kolmogorov criterion from Lemma \ref{l.kolmogorov} just as we did to bound \eqref{e.claim2} and \eqref{e.claim3} respectively. This involves proving four estimates (two for each claim). For the first claim, we wish to find $K_\eps(\delta)$ such that 
\begin{equation}\label{e.thm3claim1proof1}
\E \left| \sum_{k} \int_s^t \innerprod{q_k^\eps e_k,\rhobar^\eps_{(t-r)/\eps^2}e_m}dB^\eps_{k}(r)\right|^2 \leq K_\eps(\delta) |t-s|^\delta\;,
\end{equation}
and 
\begin{equation}\label{e.thm3claim1proof2}
\E \left| \sum_{k} \int_0^t \innerprod{q_k^\eps e_k,(\rhobar^\eps_{(t-r)/\eps^2} - \rhobar^\eps_{(t-s)/\eps^2})e_m}dB^\eps_{k}(r)\right|^2 \leq K_\eps(\delta) |t-s|^\delta\;,
\end{equation}
for some $\delta \in (0,1)$. Clearly, we can bound the left hand side of \eqref{e.thm3claim1proof1} by a constant multiple of
\[
\sum_k \int_s^t |\innerprod{(q_k^\eps-\qbar^\eps)e_k,\rhobar^\eps_{(t-r)/\eps^2}e_m}|^2 dr + \sum_k \int_s^t |\innerprod{\qbar^\eps e_k,\rhobar^\eps_{(t-r)/\eps^2}e_m}|^2 dr\;.
\]
Applying Lemma \ref{l.split} (with $2\nu=\eta$) to the first term and using the fact that, for every $m$, one has $\sum_k |\innerprod{e_ke_{-m},f}|^2 = \norm{f}^2$ for the second term, we can bound this by 
\begin{equs}
\eps^{-\eta} &\left( \sum_k |m-k|^{-\eta} \norm{q_k-\qbar}_{H^1}^2 \right)  \int_s^t \norm{\rhobar_{(t-r)/\eps^2}}_{H^1}^\eta \norm{\rhobar_{(t-r)/\eps^2}}^{2-\eta} dr \\
&\qquad +  \int_s^t \norm{\qbar^\eps \rhobar^\eps_{(t-r)/\eps^2}}^2 dr\;, \\
%%%%%
&\lesssim \eps^{-\eta}|m|^\eta \left( \sum_k |k|^{-\eta} \norm{q_k-\qbar}_{H^1}^2\right) \int_s^t \norm{\rhobar_{(t-r)/\eps^2}}_{H^1}^\eta \norm{\rhobar_{(t-r)/\eps^2}}^{2-\eta} dr\\
&\qquad +  \norm{\qbar}_\infty \int_s^t \norm{\rhobar_{(t-r)/\eps^2}}^2 dr\;.
\end{equs}
By Assumption \ref{ass.STWN} the sum over $k$ is finite and, by a Sobolev embedding, $\norm{\qbar}_\infty$ is also finite. The integral terms can be bounded exactly as in the proof of estimate \eqref{e.claim2} to obtain $K_\eps(\delta) = \eps^{2-\eta -2\delta}|m|^\eta$. We then treat \eqref{e.thm3claim1proof2}, and also the two respective estimates required to prove \eqref{e.thm3claim2} in the same way, by first splitting $q_k$ into $(q_k - \qbar) + \qbar$ and then applying the results from the proof of \eqref{e.claim2} and \eqref{e.claim3}. The estimates \eqref{e.thm3claim1} and \eqref{e.thm3claim2} follow.  
\end{proof}
\begin{proof}[Proof of Corollary \ref{corr:convolution}]
The proof follows in the same way as that of Theorem \ref{thm.STWN}, except we now have $\lambda_{\eps,m}^l = \vphi(\eps m + l) \innerprod{q_{m+l/\eps} e_l ,\rho}$. Moreover, we now need to show that
\begin{equation}\label{e:corr1}
\Lambda_{\eps,m}- (|\innerprod{q_m,\rho}|^2 - |\innerprod{\qbar,\rho}|^2 + \norm{(\qbar \rho)\star \tilde{\vphi}}^2)^{1/2}
\end{equation}
converges to zero as $\eps \to 0$, where $\Lambda_{\eps,m}$ is defined as above, using the new $\lambda_{\eps,m}^l$. But it is clear that
\begin{align*}
\Lambda_{\eps,m}^2 &=  |\vphi(\eps m)|^2 |\innerprod{q_m,\rho}|^2 + \sum_{ l\neq0}   |\vphi(\eps m+l)|^2 |\innerprod{q_{m+l/\eps}e_l,\rho}|^2\\
&\quad \to |\innerprod{q_m,\rho}|^2 + \sum_{l\neq0} |\vphi(l)|^2 |\innerprod{\qbar \rho,e_l}|^2\;,
\end{align*}
where the boundedness of $\vphi$ in combination with previous arguments allows us to take the limit inside the sum over $l$. Since $\norm{(\qbar \rho) \star \tilde{\vphi}}^2 = \sum_{l \in \integers} |\vphi(l)|^2 |\innerprod{\qbar \rho,e_l}|^2$, we have proven \eqref{e:corr1}. The remainder of the proof follows in exactly the same way as Theorem \ref{thm.STWN}, and since $\vphi$ is bounded, all corresponding estimates still hold. 
\end{proof}

\bibliographystyle{./Martin}
\bibliography{./homogen}

\begin{thebibliography}{ELVE04}
\expandafter\ifx\csname url\endcsname\relax
  \def\url#1{\texttt{#1}}\fi
\expandafter\ifx\csname urlprefix\endcsname\relax\def\urlprefix{URL }\fi

\bibitem[AF03]{adams75}
\textsc{R.~A. Adams} and \textsc{J.~J.~F. Fournier}.
\newblock \emph{Sobolev spaces}, vol. 140 of \emph{Pure and Applied Mathematics
  (Amsterdam)}.
\newblock Elsevier/Academic Press, Amsterdam, second ed., 2003.

\bibitem[BLP78]{bensoussan78}
\textsc{A.~Bensoussan}, \textsc{J.-L. Lions}, and \textsc{G.~Papanicolaou}.
\newblock \emph{Asymptotic analysis for periodic structures}, vol.~5 of
  \emph{Studies in Mathematics and its Applications}.
\newblock North-Holland Publishing Co., Amsterdam, 1978.

\bibitem[Del04]{MR2078542}
\textsc{F.~Delarue}.
\newblock Auxiliary {SDE}s for homogenization of quasilinear {PDE}s with
  periodic coefficients.
\newblock \emph{Ann. Probab.} \textbf{32}, no.~3B, (2004), 2305--2361.

\bibitem[DPZ92]{daprato92}
\textsc{G.~Da~Prato} and \textsc{J.~Zabczyk}.
\newblock \emph{Stochastic equations in infinite dimensions}, vol.~44 of
  \emph{Encyclopedia of Mathematics and its Applications}.
\newblock Cambridge University Press, Cambridge, 1992.

\bibitem[ELVE04]{vandensome}
\textsc{W.~E}, \textsc{X.~Li}, and \textsc{E.~Vanden-Eijnden}.
\newblock Some recent progress in multiscale modeling.
\newblock In \emph{Multiscale modelling and simulation}, vol.~39 of \emph{Lect.
  Notes Comput. Sci. Eng.},  3--21. Springer, Berlin, 2004.

\bibitem[Fre64]{MR0163062}
\textsc{M.~I. Fre{\u\i}dlin}.
\newblock The {D}irichlet problem for an equation with periodic coefficients
  depending on a small parameter.
\newblock \emph{Teor. Verojatnost. i Primenen.} \textbf{9}, (1964), 133--139.

\bibitem[Hai09]{hairer09}
\textsc{M.~Hairer}.
\newblock \emph{An Introduction to Stochastic PDEs}.
\newblock Lecture Notes, 2009.

\bibitem[HP08]{pardoux08}
\textsc{M.~Hairer} and \textsc{E.~Pardoux}.
\newblock Homogenization of periodic linear degenerate {PDE}s.
\newblock \emph{J. Funct. Anal.} \textbf{255}, no.~9, (2008), 2462--2487.

\bibitem[Ich04]{MR2072382}
\textsc{N.~Ichihara}.
\newblock Homogenization problem for stochastic partial differential equations
  of {Z}akai type.
\newblock \emph{Stoch. Stoch. Rep.} \textbf{76}, no.~3, (2004), 243--266.

\bibitem[Par99]{pardoux99}
\textsc{{\'E}.~Pardoux}.
\newblock Homogenization of linear and semilinear second order parabolic {PDE}s
  with periodic coefficients: a probabilistic approach.
\newblock \emph{J. Funct. Anal.} \textbf{167}, no.~2, (1999), 498--520.

\bibitem[PS05]{MR2148377}
\textsc{G.~A. Pavliotis} and \textsc{A.~M. Stuart}.
\newblock Periodic homogenization for inertial particles.
\newblock \emph{Phys. D} \textbf{204}, no. 3-4, (2005), 161--187.

\bibitem[PS08]{pavliotis08}
\textsc{G.~A. Pavliotis} and \textsc{A.~M. Stuart}.
\newblock \emph{Multiscale methods}, vol.~53 of \emph{Texts in Applied
  Mathematics}.
\newblock Springer, New York, 2008.
\newblock Averaging and homogenization.

\bibitem[PSV77]{papa77}
\textsc{G.~C. Papanicolaou}, \textsc{D.~Stroock}, and \textsc{S.~R.~S.
  Varadhan}.
\newblock Martingale approach to some limit theorems.
\newblock In \emph{Papers from the {D}uke {T}urbulence {C}onference ({D}uke
  {U}niv., {D}urham, {N}.{C}., 1976), {P}aper {N}o. 6},  ii+120 pp. Duke Univ.
  Math. Ser., Vol. III. Duke Univ., Durham, N.C., 1977.

\bibitem[RS75]{reedsimon_fourier}
\textsc{M.~Reed} and \textsc{B.~Simon}.
\newblock \emph{Methods of modern mathematical physics. {II}. {F}ourier
  analysis, self-adjointness}.
\newblock Academic Press [Harcourt Brace Jovanovich Publishers], New York,
  1975.

\bibitem[RY99]{revuz99}
\textsc{D.~Revuz} and \textsc{M.~Yor}.
\newblock \emph{Continuous martingales and {B}rownian motion}, vol. 293 of
  \emph{Grundlehren der Mathematischen Wissenschaften [Fundamental Principles
  of Mathematical Sciences]}.
\newblock Springer-Verlag, Berlin, third ed., 1999.

\bibitem[SRP09]{pardoux09}
\textsc{A.~B. Sow}, \textsc{R.~Rhodes}, and \textsc{{\'E}.~Pardoux}.
\newblock Homogenization of periodic semilinear parabolic degenerate {PDE}s.
\newblock \emph{Ann. Inst. H. Poincar{\'e} Anal. Non Lin{\'e}aire} \textbf{26},
  no.~3, (2009), 979--998.

\bibitem[TM05]{temam00}
\textsc{R.~Temam} and \textsc{A.~Miranville}.
\newblock \emph{Mathematical modeling in continuum mechanics}.
\newblock Cambridge University Press, Cambridge, second ed., 2005.

\bibitem[WCD07]{duan07}
\textsc{W.~Wang}, \textsc{D.~Cao}, and \textsc{J.~Duan}.
\newblock Effective macroscopic dynamics of stochastic partial differential
  equations in perforated domains.
\newblock \emph{SIAM J. Math. Anal.} \textbf{38}, no.~5, (2006/07), 1508--1527.

\bibitem[WD07]{wang07}
\textsc{W.~Wang} and \textsc{J.~Duan}.
\newblock Homogenized dynamics of stochastic partial differential equations
  with dynamical boundary conditions.
\newblock \emph{Comm. Math. Phys.} \textbf{275}, no.~1, (2007), 163--186.

\end{thebibliography}

\end{document}